\numberwithin{equation}{section}
\begin{document}

\newtheorem{theorem}{Theorem}[section] 
\newtheorem{proposition}[theorem]{Proposition}
\newtheorem{corollary}[theorem]{Corollary}
\newtheorem{lemma}[theorem]{Lemma}

\theoremstyle{definition}
\newtheorem{assumption}[theorem]{Assumption}
\newtheorem{definition}[theorem]{Definition}

\theoremstyle{definition} 
\newtheorem{remark}[theorem]{Remark}
\newtheorem{remarks}[theorem]{Remarks}
\newtheorem{example}[theorem]{Example}
\newtheorem{examples}[theorem]{Examples}
\newenvironment{pf}%
{\begin{sloppypar}\noindent{\bf Proof.}}%
{\hspace*{\fill}$\square$\vspace{6mm}\end{sloppypar}}
\def\R{{\mathbb R}}
\def\N{{\mathbb N}}
\def\C{{\mathbb C}}
\def\Q{{\mathbb Q}}
\def\bR{{\mathbb R}}
\def\mP{{\mathbb P}}
\def\mQ{{\mathbb Q}}
\def\mL{{\mathbb L}}
\def\mD{{\mathbb D}}
\def\mA{{\mathbb A}}
\def\mB{{\mathbb B}}
\def\cR{{\mathcal R}}
\def\cS{{\mathcal S}}
\def\cQ{{\mathcal Q}}
\def\curl{{\mathrm{curl}\,}}
\def\div{{\mathrm{div}}}
\def\eps{{\varepsilon}}
\def\vphi{{\varphi}}
\def\om{{\omega}}
\def\la{{\lambda}}
\def\Om{{\Omega}}
\def\cT{{\mathcal{T}}}
\def\cH{{\mathcal{H}}}
\def\cO{{\mathcal{O}}}
\def\cI{{\mathcal{I}}}
\def\hW{{\widehat W}}
\def\sLis{{mathscr L}_{is}}
\def\hook{{\,\hookrightarrow\,}}
\def\hookd{\,{\stackrel{_d}{\hookrightarrow}\,}}
\def\vp{{\varphi}}
\def\la{{\langle}}
\def\ra{{\rangle}}

\hyphenation{Lipschitz}

\sloppy
\title[Regularity for the Stokes equations on a 2{D} wedge domain]
{Optimal Sobolev regularity for the Stokes equations\\ on a 2{D} wedge
domain}

\author[M.\ K\"ohne]{Matthias K\"ohne}

\author[J. Saal]{J\"urgen Saal}

\author[L.\ Westermann]{Laura Westermann}

\address{Mathematisches Institut, Angewandte Analysis\\
         Heinrich-Heine-Uni\-ver\-sit\"at D\"usseldorf\\
         40204 D\"usseldorf, Germany}

\email{matthias.koehne@hhu.de}
\email{juergen.saal@hhu.de}
\email{laura.westermann@hhu.de}

\thispagestyle{empty}
\parskip0.5ex plus 0.5ex minus 0.5ex
\bibliographystyle{plain}
\def\spn{{\mathrm{span}}}
\def\sL{{\mathscr{L}}}
\def\sLis{{\mathscr{L}_{is}}}
\def\cL{{\mathcal{L}}}
\def\cT{{\mathcal{T}}}
\def\eps{{\varepsilon}}
\def\R{{\mathbb{R}}}
\def\bt{{\beta}}
\def\pa{{\partial}}
\newcommand{\bN}{\mathbb N}

 

\subjclass[2010]{Primary: 35Q30, 76D03, 35K67; Secondary: 76D05, 35K65}

\keywords{Stokes equations, Laplace equation, wedge domains, perfect
  slip, optimal regularity.}

\begin{abstract}
In this note we prove that the solution of the stationary and
the instationary Stokes equations subject to perfect slip boundary
conditions on a 2D wedge domain admits optimal regularity in
the $L^p$-setting, in particular it is $W^{2,p}$ in space.
This improves known results in the literature to a large extend.
For instance, in \cite[Theorem 1.1 and Corollary 3]{Maier} it is 
proved that the Laplace and the Stokes operator in the underlying 
setting have maximal regularity. 
In that result the range of $p$ admitting $W^{2,p}$ regularity, however,
is restricted to the interval $1<p<1+\delta$ for small $\delta>0$, 
depending on the opening angle of the wedge.
This note gives a detailed answer to the question,
whether the optimal Sobolev regularity extends to the
full range $1<p<\infty$. We will show that
for the Laplacian this does only hold on a suitable subspace,
but, depending on the opening angle of the wedge domain, 
not for every $p\in(1,\infty)$ on the entire $L^p$-space. 
On the other hand, for the Stokes operator in the space of
solenoidal fields $L^p_\sigma$ 
we obtain optimal Sobolev regularity for the full range
$1<p<\infty$ and for all opening angles less than $\pi$. 
Roughly speaking, this relies on the fact that
an existing ``bad'' part of $L^p$ for the Laplacian is complemented 
to the space of solenoidal vector fields.
\end{abstract}

\maketitle


\section{Introduction and main results}
It is well-known that regularity properties for PDE 
on non-smooth domains are important
for many applications. 
The main objective of this note is to derive best possible regularity in the
$L^p$-setting for the instationary Stokes equations 
\begin{equation}
	 	\left.
	 	\begin{array}{r@{\ =\ }lll}
	 		\partial_tu - \Delta u + \nabla \pi &  f &
			\text{in} & (0,\infty) \times G, \\
	 		\text{div}\,u &  0 &\text{in} &
			(0,\infty) \times G, \\
	 		\text{curl}\, u=0, \ u \cdot \nu &0 &
			\text{on} & (0,\infty) \times \partial G, \\
	 		u(0) &  u_0 & \text{in} & G,
	 	\end{array}
	 	\right\}
	 \label{stokes_gleichung}
\end{equation}
subject to perfect slip boundary conditions on a 
two-dimensional wedge type domain given as
\begin{equation}
G:= \left\{ (x_1, x_2) \in \mathbb{R}^2: \ 0<x_2<x_1 \tan\theta_0 \right\}.
\label{domain_G}
\end{equation}
Here $ \nu $ denotes the outer normal vector at $\partial G$,
$\theta_0\in(0,\pi)$ the opening angle of the wedge, and 
$\text{curl}\, u=\partial_1 u_2-\partial_2 u_1$. 

There exist approaches to an $L^p$-theory for 
classical elliptic and parabolic problems
on domains with conical boundary points, see, e.g., the classical
monographs \cite{Grisvard,maro2010}, or also \cite{Coster-Nicaise:
Helmholtz equation, Coster-Nicaise: Cauchy-Dirichtlet heat equation} for
the heat equation subject to Dirichlet boudary conditions.
In contrast to that,
corresponding results for the Stokes equations are very rare, in
particular for the instationary case.
For the stationary Stokes equations there are the classical regularity
results \cite{kondra1967,ko1976,dauge1989,maro2010,Grisvard,deuring1998}. 
For a negative result concerning the generation of an 
analytic semigroup in three dimensions for the Stokes operator 
subject to the no-slip condition see \cite{deuring2001}. More recently,
an  approach to analytic regularity was presented in \cite{gs2006}. 
We also refer to \cite{Kozlov-Rossmann}, where the Stokes equations subject to no-slip boundary conditions in a cone are studied, and to \cite{hisa2016} for an overview on the Stokes equations including approaches to non-smooth domains.

It seems that a general  approach to the instationary Stokes equations
on domains with edges and vertices does not exist in the literature, 
even for domains having a simple structure such as wedge domains.
There is, of course, the Lipschitz approach to even more general 
non-smooth domains. Existence and analyticity of the Stokes
semigroup on $L^p_\sigma$ on Lipschitz domains is proved, for instance,
in \cite{mitmon2008,shen2012,tolksdorf2017}. 
Note that the Lipschitz approach does not provide
full $W^{2,p}$ Sobolev regularity which, however, 
might be crucial for the treatment of related quasilinear problems.
Moreover, in the Lipschitz approach the 
range of available $p$ is restricted in general.
Thus, for our purposes this approach seems to be too general. 
The main objective of this note is $W^{2,p}$ Sobolev regularity
for (\ref{stokes_gleichung}) for all $p\in(1,\infty)$. 
\begin{remark}
In this note we frequently use the expressions {\em maximal regularity,
optimal (Sobolev) regularity, optimal regularity in the $L^p$-setting}.
The {\em maximal regularity} here refers to the standard
maximal regularity concerning (linear) evolution equations,
see \cite{Kalton-Weis:Operator-Sums,Denk-Hieber-Pruess:Maximal-Regularity,
Kunstmann, Pruss_Simonett} for a precise definition.
The latter two notions refer to
optimal regularity in the sense of elliptic regularity, i.e.,
for the second order equations considered here to $W^{2,p}$-regularity
in space.
\end{remark}
Concerning Stokes the advantage of imposing perfect slip conditions
lies in the fact that Helmholtz projector and Laplacian commute,
which is not the case in general. Hence the Stokes operator is given
as the part of the Laplacian in the solenoidal subspace.
Note that this observation has been utilized in \cite{mitmon2008} 
and \cite{Maier} already.
In fact, in \cite{Maier} maximal regularity for  
(\ref{stokes_gleichung}) is proved in two and three dimensional
wedges in Kondrat'ev spaces
\begin{equation}\label{defkondsp}
	L^p_{\gamma}(G,\R^2)
	:=L^p(G,\rho^\gamma d(x_1,x_2),\R^2),
	\quad \rho:=|(x_1,x_2)|,\ \gamma\in\R.
\end{equation}
(Note that \cite{Maier} focuses on the 3D version; the 2D counterpart
then is completely analogous.)
Optimal Sobolev regularity in the sense of our main results below, 
however, could only be established for 
$1<p<1+\delta$ with $\delta>0$ possibly small, depending
on the opening angle $\theta_0$ of the wedge and the Kondrat'ev exponent
$\gamma$. This shortcoming relies on 
a spectral constraint that relates to the constraint (\ref{spec_cond})
in Theorem~\ref{main1} below. In fact, for $\gamma=0$ under the constraint
imposed in \cite{Maier}
we even have $\delta\to 0$ for $\theta_0\to \pi$ such that for 
angles close to $\pi$ only a very small interval for $p$ remains.
Note that a similar spectral constraint concerning 
regularity for the heat equation subject to Dirichlet and Neumann
boundary conditions is imposed in \cite{Nazarov2001,Pruss}. 

In this note we will show that in 2D this vast restriction on $p$ can
be dropped completely. To be precise, our main result reads as follows
(see (\ref{defsolss}) and (\ref{defsolss2}) 
for the definition of the solenoidal subspace
$L^p_\sigma(G)$ on a wedge domain and the definition
of $\widehat{W}^{1,p}(G)$).
\begin{theorem}\label{main2}
	Let $1<p<\infty$, $\theta_0\in(0,\pi)$, 
	$\rho=|(x_1,x_2)|$, and $G\subset\R^2$
	be defined as in (\ref{domain_G}). 
	Then the Stokes operator subject to perfect slip 
		\begin{align*}
			A_S u &= - \Delta  u, \\
			u \in D(A_S)   &= \biggl\{ u \in W^{2,p}(G,\R^2)
			\cap L^p_\sigma(G):
			\ \mathrm{curl}\, u =0, \ \nu \cdot u = 0 \ 
			\text{on} \ \partial G, \biggr. 
		\\ & \biggl. 
			 \qquad  \rho^{|\alpha|-2} \partial^\alpha u 
			\in L^p(G, \mathbb{R}^2) \ (|\alpha| \leq 
			2) \biggr\}
		\end{align*}
	is $\cR$-sectorial with $\cR$-angle
	$\phi^\cR_{A_S}<\pi/2$, 
	hence has maximal regularity on $L^p_\sigma(G)$.
\end{theorem}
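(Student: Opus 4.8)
The strategy is to exploit the key structural fact emphasized in the introduction: under perfect slip conditions the Helmholtz projector $\mP_\sigma$ and the Dirichlet/Neumann-type Laplacian commute, so that the Stokes operator $A_S$ is nothing but the part of the (vectorial) Laplacian $A$ with domain encoding the boundary conditions $\curl u=0$, $\nu\cdot u=0$, restricted to the solenoidal subspace $L^p_\sigma(G)$. Hence the proof reduces to two inputs: first, $\cR$-sectoriality of the full Laplacian $A$ on $L^p(G,\R^2)$ with $\cR$-angle $<\pi/2$ — but only on the relevant invariant subspace, since the introduction warns that on the whole of $L^p$ this fails for certain $p$ and $\theta_0$; second, the identification of $L^p_\sigma(G)$ with (a closed subspace of) that good invariant subspace, so that the restriction inherits $\cR$-sectoriality with the same angle bound. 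Since $\cR$-sectoriality with angle $<\pi/2$ is equivalent to maximal $L^q$-regularity (by \cite{Denk-Hieber-Pruess:Maximal-Regularity,Kalton-Weis:Operator-Sums,Kunstmann,Pruss_Simonett}), the second conclusion of the theorem follows immediately from the first.

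Concretely, I would first recall (or cite from the earlier analysis in this paper, which establishes the $W^{2,p}$ mapping property and the Kondrat'ev-type decay $\rho^{|\alpha|-2}\pa^\alpha u\in L^p$) that the scalar model operators on the wedge — the Laplacian with Dirichlet data on one boundary component and Neumann data on the other, arising after diagonalizing the curl/divergence system — are sectorial, and then upgrade sectoriality to $\cR$-sectoriality. The natural route here is a Fourier-multiplier / Mellin-transform representation of the resolvent $(\la+A)^{-1}$ on the wedge: passing to polar coordinates $(\rho,\theta)$ turns $G$ into a half-strip $(0,\infty)\times(0,\theta_0)$, and the Mellin transform in $\rho$ reduces the resolvent to an operator-valued symbol whose $\cR$-bounds one controls via a Kalton–Weis type theorem, provided the symbol has no poles on the relevant line — this is exactly the spectral constraint \eqref{spec_cond} alluded to in the excerpt. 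The point of the 2D result is that, after projecting onto $L^p_\sigma(G)$, the troublesome Mellin symbol poles that force the restriction $1<p<1+\delta$ at the level of the scalar Laplacian no longer lie in the closure of the solenoidal subspace: the "bad" eigenfunctions are precisely the non-solenoidal ones (gradients of harmonic functions of the critical homogeneity), so they are annihilated by $\mP_\sigma$.

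Thus the main steps are: (i) write $A_S=\mP_\sigma A|_{D(A_S)}$ using the commutation $\mP_\sigma A=A\mP_\sigma$ valid for perfect slip; (ii) decompose $L^p(G,\R^2)=L^p_\sigma(G)\oplus G_p$ (where $G_p=\{\nabla q: q\in\widehat W^{1,p}(G)\}$ is the gradient complement), noting both summands are $A$-invariant; (iii) show $A$ restricted to $L^p_\sigma(G)$ is $\cR$-sectorial with $\cR$-angle $<\pi/2$ by the Mellin-multiplier estimate, checking that the relevant symbol is holomorphic and $\cR$-bounded on this subspace for \emph{all} $1<p<\infty$ and $\theta_0\in(0,\pi)$ — i.e., that the problematic zeros of the characteristic determinant correspond only to modes living in $G_p$; (iv) conclude $\cR$-sectoriality of $A_S$ and hence maximal regularity. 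The main obstacle is step (iii): one must carry out the explicit spectral analysis of the wedge symbol on the solenoidal modes — solving the transcendental eigenvalue equation from the curl/divergence system with perfect-slip boundary conditions and verifying that for every opening angle $\theta_0<\pi$ all such eigenvalues stay off the critical Mellin line for every $p\in(1,\infty)$ — together with the uniform $\cR$-bound of the resolvent symbol (as opposed to mere uniform boundedness), which requires a genuine operator-valued multiplier argument rather than a scalar estimate.
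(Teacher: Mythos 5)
Your high-level picture is right --- $A_S$ is the part of the Laplacian in $L^p_\sigma(G)$, and the ``bad'' modes of the transformed operator (the eigenfunctions $e_0,e_1,e_2$ of $\cT_{p,\theta}$ whose eigenvalues can coincide with $-\beta_p^2$) are non-solenoidal --- but two of your steps contain genuine gaps that the paper has to work hard to close. First, the fact that the bad subspace meets $L^p_\sigma(G)$ only in $\{0\}$ does \emph{not} mean that $L^p_\sigma(G)$ is contained in, or projected isomorphically onto, the good subspace $\mL^p=\widetilde\mQ L^p(G,\R^2)$ for free: the paper states explicitly that $L^p_\sigma(G)$ is \emph{not} included in $\mL^p$. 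What is actually needed is the quantitative lower bound $\|\widetilde\mQ u\|_p\ge\delta\|u\|_p$ for $u\in L^p_\sigma(G)$, together with the analogous bounds in the $K^2_p$- and $D(A_p)$-norms, i.e.\ that $\widetilde\mQ$ restricts to an isomorphism of $L^p_\sigma(G)$ onto a closed subspace of $\mL^p$ (Proposition~\ref{closedrangeq} and Corollary~\ref{closedrangeqcor}). Proving this is the bulk of Section~\ref{secstokes}; it rests on a careful analysis of the transformed divergence operator on the span of $e_0,e_1,e_2$, including the degenerate case where a coefficient $s_j+\partial_x$ reduces to $\partial_x$ and one must pass to homogeneous Sobolev spaces and establish a direct-sum decomposition of the target (Lemma~\ref{divopbddhat}). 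Only with this isomorphism in hand can one write $(\lambda-A_S)^{-1}=\mQ^{-1}(\lambda-\mA_p)^{-1}\widetilde\mQ$ and transfer $\cR$-sectoriality. Your steps (i)--(ii), which invoke the Helmholtz decomposition and the commutation $\mP_\sigma A=A\mP_\sigma$, are also not what the paper uses: the Helmholtz projection on the wedge is not established here, and the paper instead proves $D(A_S)=D(A_p)\cap L^p_\sigma(G)$ directly by integration by parts (Lemma~\ref{lm:4.1}).

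Second, your step (iii) --- an $\cR$-bounded Mellin/Fourier multiplier estimate for the \emph{resolvent} $(\lambda+A)^{-1}$ valid for all $1<p<\infty$ --- would fail as stated: after the Euler transformation the resolvent problem acquires the term $\lambda e^{2x}v$, which destroys translation invariance in $x$, so the resolvent is not a multiplier, and the operator-sum treatment of this non-commuting pair is precisely what produces the restriction $1<p<1+\delta$ in \cite{Maier}. The paper's way around this is the detour you do not mention: it treats the \emph{elliptic} problem $-\Delta u=f$, for which the transformed operator genuinely is a translation-invariant sum $T_{p,x}+T_{p,\theta}$ and the Kalton--Weis theorem applies for all $p$ once the three bad modes are projected out; it then proves consistency of the elliptic solution operators $(\mB_p^{-1})$ on the subscale $(\mL^p)$, and upgrades the resolvent result from $1<p<1+\delta$ to all $p$ by a duality argument ($\mA_p'=\mA_{p'}$, Theorem~\ref{korollar_ergebnis_paper}) combined with complex interpolation. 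Without these consistency, duality and interpolation steps there is no route from the small-$p$ maximal regularity of \cite{Maier} to the full range $1<p<\infty$, and your plan as written does not supply one.
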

As an immediate consequence we obtain strong solvability of
(\ref{stokes_gleichung}).
\begin{corollary}\label{main2cor}
	Let $1<p,q<\infty$, $\theta_0\in(0,\pi)$, 
	$\rho=|(x_1,x_2)|$, and $G\subset\R^2$
	be defined as in (\ref{domain_G}). Then for every 
	$f\in L^q\left((0,\infty),L^p_\sigma(G)\right)$ 
	and $u_0\in \cI_{p,q}:=(L^p_\sigma(G),D(A_S))_{1-1/p,q}$ 
	there is a unique solution 
	$(u,\pi)\in \bigl(W^{1,q}(\R_+,L^p_\sigma(G))\cap 
	L^q(\R_+,D(A_S))\bigr)\times L^q(\R_+,\widehat{W}^{1,p}(G))$
	of (\ref{stokes_gleichung}). This solution satisfies $\pi=0$
	and
\[
	\|\partial_t u\|_{L^q(\R_+,L^p)}
	+\sum_{|\alpha|\le 2}\|\rho^{|\alpha|-2} \partial^\alpha u
	\|_{L^q(\R_+,L^p))}
	\le C\left(\|f\|_{L^q(\R_+,L^p)}+\|u_0\|_{\cI_{p,q}}\right)
\]
with $C>0$ independent of $f$ and $u_0$.
\end{corollary}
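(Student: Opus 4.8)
The plan is to obtain the corollary directly from Theorem~\ref{main2} by means of the abstract $L^q$-theory of maximal regularity. First I would record that $L^p_\sigma(G)$, being a closed subspace of $L^p(G,\R^2)$, is a UMD space for $1<p<\infty$; combined with the fact, provided by Theorem~\ref{main2}, that $A_S$ is $\cR$-sectorial on $L^p_\sigma(G)$ with $\cR$-angle $\phi^\cR_{A_S}<\pi/2$, the theorem of Weis (see, e.g., \cite{Kalton-Weis:Operator-Sums,Denk-Hieber-Pruess:Maximal-Regularity,Pruss_Simonett}) yields that $A_S$ has maximal $L^q$-regularity on $\R_+$ for every $1<q<\infty$. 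Hence, for every $f\in L^q(\R_+,L^p_\sigma(G))$ and every $u_0$ in the associated trace space $\cI_{p,q}$, the abstract Cauchy problem $\partial_t u+A_S u=f$ on $\R_+$, $u(0)=u_0$, has a unique solution $u\in W^{1,q}(\R_+,L^p_\sigma(G))\cap L^q(\R_+,D(A_S))$ satisfying $\|\partial_t u\|_{L^q(\R_+,L^p)}+\|A_S u\|_{L^q(\R_+,L^p)}\le C(\|f\|_{L^q(\R_+,L^p)}+\|u_0\|_{\cI_{p,q}})$. Since $\|v\|_{D(A_S)}\simeq\sum_{|\alpha|\le2}\|\rho^{|\alpha|-2}\partial^\alpha v\|_{L^p}$ on $D(A_S)$ by the definition of the domain and the estimates established in the proof of Theorem~\ref{main2}, this is exactly the claimed a priori bound.

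Next I would verify that the $u$ produced in this way, together with $\pi=0$, solves the Stokes system \eqref{stokes_gleichung}. The decisive structural point --- already used in \cite{mitmon2008,Maier} and recalled in the introduction --- is that under perfect slip boundary conditions the Helmholtz projection $P$ of $L^p(G,\R^2)$ onto $L^p_\sigma(G)$ commutes with $\Delta$, so that $A_S$ is precisely the part of $-\Delta$ in $L^p_\sigma(G)$ and $A_S u=-\Delta u$ for $u\in D(A_S)$. Consequently $\partial_t u-\Delta u=\partial_t u+A_S u=f$, i.e.\ the momentum equation holds with $\nabla\pi\equiv0$, whence $\pi=0$ as an element of $\widehat W^{1,p}(G)$; the remaining requirements $\div u=0$, $\curl u=0$ and $\nu\cdot u=0$ on $\partial G$, as well as $u(0)=u_0$, are built into $u\in L^q(\R_+,D(A_S))$ and the initial condition.

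For uniqueness I would take two solutions $(u_1,\pi_1)$ and $(u_2,\pi_2)$ in the stated class, set $w:=u_1-u_2$, apply $P$ to the difference of the momentum equations to get $\partial_t w+A_S w=0$ with $w(0)=0$, and conclude $w=0$ from uniqueness in the abstract problem; then $\nabla(\pi_1-\pi_2)=\Delta w=0$, so $\pi_1=\pi_2$ in $\widehat W^{1,p}(G)$. I do not expect any genuine obstacle here: once Theorem~\ref{main2} is available the argument is routine, and the only points that need a little care are the identification of $\cI_{p,q}$ as the correct trace space for the initial datum, the norm equivalence on $D(A_S)$, and the commutation of $P$ with $\Delta$ that forces the pressure to vanish.
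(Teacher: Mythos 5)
Your argument is correct and is precisely the route the paper intends: the corollary is stated as an immediate consequence of Theorem~\ref{main2}, obtained by combining the $\cR$-sectoriality of $A_S$ with angle $<\pi/2$ with the Weis characterization of maximal $L^q$-regularity on UMD spaces, the identification $A_Su=-\Delta u$ (so $\pi=0$), and the equivalence of the graph norm of $A_S$ with the weighted $W^{2,p}$-norm on $D(A_S)$. The only detail worth making explicit is that the quantitative bound $\|u\|_{K^2_p}\le C\|A_Su\|_{L^p}$ follows from the scaling argument of Lemma~\ref{scallem} applied directly to $A_S$ (or from the closed graph theorem), since $\rho(A_p)$ itself may be empty for the relevant $p$.
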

For the proof of Theorem~\ref{main2} we basically follow the 
strategy in \cite{Maier}, 
that is, we first consider the Laplace equation
subject to perfect slip conditions. In a standard procedure, 
by employing polar coordinates
and Euler transformation, we reduce the Laplace equation on a wedge 
to a problem on a layer. 
On the layer we apply the operator sum method as it is performed
originally in \cite{Pruss}.

The difference to \cite{Maier} lies in the fact
that here we consider the elliptic problem 
\begin{equation}
	 	\left.
	 	\begin{array}{r@{\ =\ }lll}
	 		- \Delta u &  f &
			\text{in} & G, \\
	 		\text{curl}\, u=0, \ u \cdot \nu &0 &
			\text{on} & \partial G 
	 	\end{array}
	 	\right\}
	 \label{lapequ}
\end{equation}
instead of the corresponding resolvent problem. 
The advantage is that for the transformed problem we then 
have precise knowlege
on the spectrum. This, in turn, allows to completely characterize the set
of $p$ for which optimal regularity for (\ref{lapequ}) is available. 
We formulate this in our second main result which also represents the basis
for Theorem~\ref{main2} and which we even prove in Kondrat'ev spaces.
\begin{theorem} \label{main1}
Let $1<p<\infty$, $\theta_0\in(0,\pi)$, 
$\gamma\in\R$, and $\rho=|(x_1,x_2)|$. 
Then equation \eqref{lapequ} is for each $f \in L^p_\gamma(G, \bR^2)$  
uniquely solvable with a solution $u$ satisfying
\begin{equation}\label{estoptellreg}
	 \rho^{|\alpha|-2} \partial^\alpha u\in L^p_\gamma(G, \mathbb{R}^2)
	\quad (|\alpha|\le 2)
\end{equation}
if and only if
\begin{equation}\label{spec_cond}
	2-\frac{2+\gamma}{p}
	\not\in\left \{  \frac{k\pi}{\theta_0} \pm 1: 
	k\in \mathbb{N} \right \} \cup \{1\}.
\end{equation}
\end{theorem}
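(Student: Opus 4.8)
The plan is to reduce \eqref{lapequ} to a parameter-independent elliptic problem on an infinite layer and then read off the solvability condition directly from the symbol. First I would introduce polar coordinates $(r,\theta)$ on $G$, so that $G$ becomes $(0,\infty)\times(0,\theta_0)$, and rewrite $-\Delta u=f$ together with the perfect slip conditions $\curl u=0$, $\nu\cdot u=0$ on $\{\theta=0,\theta_0\}$ in terms of the components $u_r,u_\theta$. Because the boundary conditions decouple nicely in these coordinates (this is exactly the point of imposing perfect slip, as already noted before Theorem~\ref{main2}), one obtains a system on the sector that, after the Euler substitution $r=e^s$, $s\in\R$, transforms into a problem on the layer $\Sigma:=\R\times(0,\theta_0)$ with constant coefficients in $s$. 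The Kondrat'ev weight $\rho^\gamma\,d(x_1,x_2)$ becomes, under $r=e^s$, a shift in the integrability exponent: $L^p_\gamma(G)$ corresponds to $L^p$ of the layer with respect to $e^{\beta s}\,ds\,d\theta$ for $\beta=\beta(p,\gamma)=2+\gamma-2$, or, after absorbing the weight into the unknown, to a translation of the Mellin/Fourier line by the amount $\lambda_0:=2-\tfrac{2+\gamma}{p}$. This number $\lambda_0$ is precisely the quantity appearing on the left-hand side of \eqref{spec_cond}.

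Next I would apply the operator sum method of \cite{Pruss} on the layer, exactly as indicated in the excerpt: treat $\partial_s$ (equivalently, the Mellin covariable) as one sectorial operator and the realization of $-\partial_\theta^2$ on $(0,\theta_0)$ with the transformed boundary conditions as the second, and use the Kalton--Weis theorem to invert their sum on $L^p(\Sigma)$. The eigenvalues of the one-dimensional boundary-value operator in $\theta$ are of the form $(k\pi/\theta_0)^2$, $k\in\N$, with the boundary conditions forcing sine/cosine modes; combined with the two-dimensional (vectorial) structure of the Laplacian acting on $(u_r,u_\theta)$, the operator pencil associated to the transformed problem has its spectrum exactly at the points $\lambda$ with $\lambda\mp 1\in\{k\pi/\theta_0:k\in\N\}$ together with the exceptional value $\lambda=1$ coming from the rigid-rotation/translation mode of the curl-free constraint. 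Invertibility of the operator sum — and hence unique solvability of \eqref{lapequ} with the estimate \eqref{estoptellreg} — holds precisely when the shifted line $\{\operatorname{Re}=\lambda_0\}$ avoids this spectrum, which is exactly condition \eqref{spec_cond}.

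For the ``if'' direction I would assemble the above into: condition \eqref{spec_cond} $\Rightarrow$ the layer operator is boundedly invertible with bounds giving full elliptic regularity $\Rightarrow$ transforming back (the chain rule for $r=e^s$ converts $\partial_s$-derivatives into $r^{|\alpha|}\partial^\alpha$, i.e.\ the weights $\rho^{|\alpha|-2}$) yields \eqref{estoptellreg}. For the ``only if'' direction I would argue by contradiction: if $\lambda_0$ lies on the spectrum, one constructs an explicit homogeneous solution of the transformed problem of the form $r^{\lambda_0}\cdot(\text{angular mode})$ (a Laurent-type singular solution, or a log-modified one at $\lambda=1$), verify that its right-hand side lies in $L^p_\gamma$ while the purported solution just fails to have $\rho^{|\alpha|-2}\partial^\alpha u\in L^p_\gamma$, thereby violating either existence with the stated regularity or uniqueness. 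The main obstacle I anticipate is bookkeeping rather than conceptual: carefully transcribing the perfect slip boundary conditions $\curl u=0$, $\nu\cdot u=0$ into the polar/Euler coordinates for the \emph{vector} unknown, correctly identifying the full operator pencil (including the subtle exceptional value $1$ and making sure no spurious eigenvalues are introduced by the coordinate change), and tracking the precise correspondence between the Kondrat'ev exponent, the $L^p$ index, and the location $\lambda_0=2-\tfrac{2+\gamma}{p}$ of the Mellin line. Once that dictionary is set up correctly, the analytic core is a direct citation of the operator sum machinery from \cite{Pruss,Maier}.
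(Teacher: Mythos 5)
Your proposal follows essentially the same route as the paper: polar coordinates plus the Euler substitution $r=e^s$ reduce \eqref{lapequ} to a constant-coefficient system on the layer $\R\times(0,\theta_0)$, the Kalton--Weis operator-sum method inverts $T_{p,x}+T_{p,\theta}$ there, and condition \eqref{spec_cond} is precisely the requirement that $-\beta_p^2$ with $\beta_p=2-\tfrac{2+\gamma}{p}$ avoid the spectrum $\{-(k\pi/\theta_0\pm1)^2:k\in\N\}\cup\{-1\}$ of the angular operator. The only deviations are technical: the paper must first project off the finitely many angular eigenmodes whose eigenvalues do not clear $-\beta_p^2$ (inverting the sum on the complement and treating the finite-dimensional part by a direct resolvent computation) before Kalton--Weis applies, and for the ``only if'' direction it uses a Fourier-multiplier/symbol argument at $(\lambda,\xi)=(-\beta_p^2,0)$ rather than your proposed singular-solution construction.
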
 
\begin{remark}
(a)\ For $\gamma=0$ condition (\ref{spec_cond}) reduces to
\begin{equation}\label{spec_cond_g0}
	2-\frac{2}{p}
	\not\in\left\{1,\, \frac{\pi}{\theta_0}- 1,\,
	\frac{2\pi}{\theta_0}- 1\right\},
\end{equation}
see Subsection~\ref{secconsis}. From this we see that 
for each angle $\theta_0\in(0,\pi)$ the case $p=2$ is excluded.
On the other hand, from the results obtained in \cite{Grisvard} 
one would expect $\partial^\alpha u\in L^2(G,\R^2)$
for $|\alpha|=2$.
Taking into account Hardy's inequality, by which the lower oder terms
in (\ref{estoptellreg}) can be estimated by the second order terms,
this looks curious at a first glance. However, $p=2$ is exactly
the case when Hardy's inequality is not valid.
Thus, for $p=2$
(\ref{estoptellreg}) still can fail for one of the lower order terms,
although $\partial^\alpha u\in L^2(G,\R^2)$, $|\alpha|=2$, might be
true.
For the excluded $p\neq 2$ (\ref{estoptellreg}) must fail for at least
one of the second order terms, since otherwise Hardy's inequality
would yield (\ref{estoptellreg}) to be valid for all terms,
see also Remark~\ref{remconsnotw2}(b). We think that this 
is an effect of the unboundedness of a wedge domain, see
also the lines subsequent to Theorem~\ref{korollarmainstokes}.\\[1mm]
(b)\ Another curious looking case is given by $\gamma=0$ and
$\theta_0=\pi/2$. Then, by reflection arguments the problem on the 
wedge $G$ can be reduced to $-\Delta u=f$ on $\R^2$. This fact implies
$\partial^\alpha u\in L^p(G,\R^2)$, $|\alpha|=2$, to be valid
for all $p\in(1,\infty)$. Again this does not contradict 
the assertion of Theorem~\ref{main1}, since in this case 
(\ref{spec_cond_g0}) is reduced to $2-2/p\not\in\{1\}$.
Thus, only $p=2$ is excluded and we find ourselves in the situation
explained in (a).
\end{remark}

It seems that Theorem~\ref{main1} is not contained
in the previous literature. This might rely on the fact that
due to the boundary conditions (\ref{lapequ}) is a system, whereas in
previous literature the Laplace equation is preferably 
considered as a scalar equation. 

In contrast to Theorem~\ref{main2}, as a first consequence 
of Theorem~\ref{main1} we obtain that for 
the instationary diffusion equation subject to perfect slip
$W^{2,p}$ regularity is not 
available if condition (\ref{spec_cond}) is not fulfilled,
see Theorem~\ref{thmmasneg} below. 
The point why we nevertheless can prove Theorem~\ref{main2} relies
on the fact that the part of $L^p$ destroying $W^{2,p}$ regularity
is more or less complemented to the space
of solenoidal fields $L^p_\sigma(G)$. 
By this fact we obtain 
optimal regularity for the stationary Stokes equations, too.

\begin{theorem} \label{korollarmainstokes}
Let $1<p<\infty$ and $\theta_0\in(0,\pi)$. 
Then for each $f \in L^p_\sigma(G)$ there exists a unique 
solution $(u,\pi)\in \bigl(W^{2,p}(G,\R^2)\cap L^p_\sigma(G)\bigr)
\times \widehat{W}^{1,p}(G)$ of the stationary 
version of (\ref{stokes_gleichung}). This solution satisfies $\pi=0$
and 
\[
	 \rho^{|\alpha|-2} \partial^\alpha u\in L^p(G, \mathbb{R}^2)
	\quad (|\alpha|\le 2).
\]
\end{theorem}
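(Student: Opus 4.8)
The plan is to reduce Theorem~\ref{korollarmainstokes} to Theorem~\ref{main1}. Recall that under perfect slip conditions the Helmholtz projector $P$ and the Laplacian commute, so the Stokes operator is the part of $-\Delta$ in $L^p_\sigma(G)$, and the resolvent/elliptic problem for Stokes is obtained by projecting the corresponding problem for the Laplacian. Concretely, given $f\in L^p_\sigma(G)$, I would first solve the vector Laplace problem \eqref{lapequ} with right-hand side $f$, which by Theorem~\ref{main1} is uniquely solvable \emph{provided} the spectral condition \eqref{spec_cond} (with $\gamma=0$) holds; the issue is precisely that for finitely many values of $p$ (equivalently of $\theta_0$) it may fail, and we must nonetheless produce a solenoidal $W^{2,p}$ solution. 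The resolution, as announced in the introduction, is that the ``bad'' part of $L^p$ obstructing $W^{2,p}$ regularity for the Laplacian lies in the complement of $L^p_\sigma(G)$, i.e. it is a gradient part; restricting the data to $f\in L^p_\sigma(G)$ removes it.

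The key steps I would carry out are as follows. First, transform \eqref{lapequ} to the layer $\R\times(0,\theta_0)$ via polar coordinates $x=\rho e^{i\theta}$ and the Euler substitution $s=\log\rho$, exactly as in \cite{Maier} and as sketched in the excerpt; in these coordinates the solenoidal constraint $\div u=0$ and the boundary conditions $\curl u=0$, $u\cdot\nu=0$ become constant-coefficient conditions, and the operator becomes a holomorphic family in the covariable dual to $s$. Second, analyze the symbol/transfer operator: the values of $2-2/p$ excluded in \eqref{spec_cond_g0}, namely $1$, $\pi/\theta_0-1$, $2\pi/\theta_0-1$, correspond to points where the relevant boundary-value problem on the cross-section $(0,\theta_0)$ fails to be invertible, and the associated one-dimensional (or finite-dimensional) kernel/cokernel consists of functions of the form $\rho^{\mu}(\text{angular profile})$. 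The crucial computational point is to check that for each such exceptional $\mu$ the corresponding homogeneous solution $u_\mu$ is a \emph{gradient}, $u_\mu=\nabla q_\mu$ with $q_\mu$ harmonic and homogeneous of the appropriate degree (these are exactly the singular harmonic functions $\rho^{k\pi/\theta_0}\cos(k\pi\theta/\theta_0)$ compatible with the Neumann-type data); hence $u_\mu\perp L^p_\sigma(G)$ in the relevant duality, so projecting the data onto $L^p_\sigma(G)$ annihilates the obstruction. Third, having removed the finitely many bad modes, the operator-sum / $\mathcal H^\infty$-calculus argument on the layer (as in \cite{Pruss,Maier}) yields a unique $u$ with $\rho^{|\alpha|-2}\partial^\alpha u\in L^p(G,\R^2)$, $|\alpha|\le 2$, and by Hardy's inequality (valid since we are never at $p=2$ in the obstructed cases, or handling $p=2$ directly) this gives $u\in W^{2,p}(G,\R^2)$. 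Fourth, recover the pressure: since $u$ solves $-\Delta u=f$ with $f$ solenoidal and $u$ solenoidal, $\nabla\pi=P^\perp(-\Delta u+f)=0$ modulo constants, so $\pi=0$ in $\widehat W^{1,p}(G)$; uniqueness follows from injectivity of $A_S$ on $L^p_\sigma(G)$, which in turn follows from the uniqueness statement in Theorem~\ref{main1} restricted to the solenoidal subspace.

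I expect the main obstacle to be Step two: rigorously identifying, for the \emph{system} \eqref{lapequ} with the curl/normal boundary conditions, the exceptional spectral values and verifying that each obstructing homogeneous solution is a gradient, so that it is genuinely complemented to $L^p_\sigma(G)$. This requires an explicit description of the kernel of the transformed cross-sectional operator at the three exceptional exponents $1,\ \pi/\theta_0\mp? $ appearing in \eqref{spec_cond_g0} and a careful matching of this kernel with the span of the singular gradients $\nabla(\rho^{k\pi/\theta_0}\cos(k\pi\theta/\theta_0))$, together with the Helmholtz/De Rham-type orthogonality $\langle \nabla q, v\rangle=0$ for $v\in L^p_\sigma(G)$ on the (unbounded) wedge — where one must be slightly careful because $G$ is unbounded and the decay/growth of the modes in $\rho$ must be compatible with the Kondrat'ev weight. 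Once this algebraic picture is in place, the rest is a routine assembly: transform back from the layer to $G$, invoke Hardy's inequality for the lower-order terms, and read off $\pi=0$ and uniqueness from Theorem~\ref{main1}.
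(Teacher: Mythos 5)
Your high-level slogan --- that the part of $L^p$ obstructing $W^{2,p}$ regularity is ``complemented to'' the solenoidal fields --- is the right one, but the concrete mechanism you propose for exploiting it does not work, and it is not the one the paper uses. First, at an exceptional value of $p$ the failure of invertibility is not of Fredholm type: after the Euler transform the obstruction is that the symbol $\lambda_i+(i\xi+\beta_p)^2$ of $\lambda_i+T_{p,x}$ vanishes at $\xi=0$ when $-\beta_p^2=\lambda_i$, so the range fails to be closed; there is no finite\mbox{-}dimensional kernel/cokernel spanned by explicit singular solutions $\rho^{\mu}(\text{angular profile})$ that one could quotient out by a duality pairing with $L^{p'}$ (and, as you yourself note, the gradients $\nabla(\rho^{k\pi/\theta_0}\cos(k\pi\theta/\theta_0))$ are not in $L^{p'}(G)$ on the unbounded wedge, so the pairing $\langle\nabla q,v\rangle=0$ is not even available). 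Accordingly, the paper removes an \emph{infinite}-dimensional subspace --- all radial profiles times the first three angular eigenfunctions $e_0,e_1,e_2$ --- and works with the restricted operator $\mB_p$ on the complement $\mL^p=\widetilde\mQ L^p(G,\R^2)$, which is invertible for every $p$ (Corollary~\ref{tpinvss}, Proposition~\ref{proppropq}).

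Second, and more importantly, your claim that ``restricting the data to $f\in L^p_\sigma(G)$ removes the bad part'' is false: the paper states explicitly that $L^p_\sigma(G)$ is \emph{not} contained in $\mL^p$. A solenoidal field generically has a nonzero component along the bad modes (a bad-mode field $\varphi(r)e_k(\theta)$ is a gradient only for one specific power-law radial profile, not for general $\varphi$). What is actually true, and what carries the whole argument, is the lower bound $\|\widetilde\mQ u\|_p\ge\delta\|u\|_p$ on $L^p_\sigma(G)$ (and its analogues on $D(B_p)\cap\ker\div$ and $D(A_S)$), i.e.\ an \emph{isomorphic embedding} of the solenoidal spaces into the good subspaces. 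Its proof (Proposition~\ref{closedrangeq}, Lemma~\ref{divopbddhat}) rests on the observation that the transformed divergence operator acts injectively, with closed range direct to $\div_{\widetilde\Theta}L^p(\R,E^p_3)$, on the bad modes, so that $\div u=0$ forces the bad component of $u$ to be controlled by $\|\widetilde\mQ u\|$. This step has no counterpart in your outline and cannot be replaced by the orthogonality argument you sketch. Finally, even granting the isomorphism, your candidate solution must be shown to be divergence free before one can conclude $\pi=0$ and apply $\mQ^{-1}$; the paper obtains this by approximating the stationary problem with resolvent problems (Proposition~\ref{mainlapconv}), for which membership in $D(A_S)$ is known, and your appeal to uniqueness in Theorem~\ref{main1} is unavailable precisely at the exceptional $p$, where that theorem asserts non-solvability/non-uniqueness in the stated class.
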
 

Of course, the Stokes equations subject to perfect slip in 2D
can also be considered without taking the path via the Laplace
equation, by utilizing its equivalence to a biharmonic equation.   
The authors of this note, however, also wanted
to compare the two equations concerning regularity. 
In this regard, we find it 
most interesting that in the underlying situation the outcome for
the Stokes equations is better than for the Laplace or diffusion
equation, which usually is vice versa by the fact that the Laplacian
enjoys much nicer properties than the Stokes operator.

We also want to remark that the appearance of the weight function 
$\rho$ is essentially due to the unboundedness of a wedge domain.
Since the second order derivatives $\nabla^2 u$ are
$L^p$-functions on the wedge $G$,
the same integrability is valid for $u$ and $\nabla u$ on any bounded
neighbourhood of the vertex of $G$. This follows by standard results,
see, e.g., \cite[Lemma~II.6.1 and Remark II.6.1]{Galdi2012}.
Thus, especially the case $\gamma = 0$ can also be regarded as 
one reply to the question for $W^{2, q}$-regularity for 
Laplace and Stokes equations subject to perfect slip on 
a convex domain.

We outline the strategy of the proofs and the organization of this
note. 
Section~\ref{seclapps} contains the approach to the Laplace operator
and equation.
After fixing notation and transforming from a wedge to a layer,
in Subsection~\ref{sec_prob_transformiert} 
we establish optimal regularity for the transformed problem.
This is based on operator sum methods, that is, Kalton-Weis type
theorems.
Since the transform from a wedge to a layer is a diffeomorphism,
this gives instantly Theorem~\ref{main1}, as stated in
Subsection~\ref{secoptkeil}.
To carry over regularity from the elliptic problem 
(\ref{lapequ}) to the instationary diffusion equation, it is enough 
to show optimal regularity for the resolvent problem
\begin{equation}
	 	\left.
	 	\begin{array}{r@{\ =\ }lll}
	 		(1- \Delta) u &  h &
			\text{in} & G, \\
	 		\text{curl}\, u=0, \ u \cdot \nu &0 &
			\text{on} & \partial G. 
	 	\end{array}
	 	\right\}
	 \label{laprespb}
\end{equation}
The idea is to regard $u$ as the solution of the elliptic
problem (\ref{lapequ}) with right-hand side $f=h-u\in L^p(G,\R^2)$. 
According to Theorem~\ref{main1} we know that this problem
has a solution, say $v$, with the regularity given in 
(\ref{estoptellreg}). It remains to prove $u=v$. By the outcome
given in \cite{Maier} this is valid for $p>1$ close to $1$.
This means, if the solution operators to problems 
(\ref{lapequ}) and (\ref{laprespb}) are consistent on the 
scale $(L^p(G,\R^2))_{1<p<\infty}$, the 
regularity in (\ref{estoptellreg}) transfers to the solution $u$
of (\ref{laprespb}) for all $1<p<\infty$. 
By the equivalence in Theorem~\ref{main1}, however, consistency
for the solution operator of (\ref{lapequ}) cannot hold on
the full scale $(L^p(G,\R^2))_{1<p<\infty}$. 
But, as shown in Subsection~\ref{secconsis}, 
it is consistent on a suitable scale of ``nice'' subspaces.   
This leads in Subsection~\ref{secreglap} to optimal regularity
for the diffusion equation on the subspaces 
for all $1<p<\infty$ (see Theorem~\ref{mainlap}).

A major difficulty for the transference of optimal Sobolev regularity 
to the Stokes equations is given by the fact that
the space of solenoidal fields $L^p_\sigma(G,\R^2)$ is not 
directly included in the ``nice'' subspace of $L^p$.
A crucial issue, taking the major part of Section~\ref{secstokes},
is therefore to prove that it can be 
isomorphically embedded into this subspace.
This isomorphic embedding is also valid for the domains of
the involved operators,
finally leading to 
Theorem~\ref{main2} and Theorem~\ref{korollarmainstokes}.

\section{The Laplace operator on a wedge domain subject to perfect slip}
\label{seclapps}

\subsection{Notation} 
First we introduce the notation used throughout this note. Let $X$ be a
Banach space. For $1 \leq p \leq \infty$ and a measure space $(S,
\Sigma, \mu)$, we denote by $L^p(S, \mu, X)$ the usual Bochner-Lebesgue
space. If $1 \leq p \leq \infty$ and $(S, \Sigma, \mu)$ is a complete
measure space, then $L^p(S, \mu, X)$ is a Banach space.
If $\Omega \subset \bR^n$ is a domain and $\mu$ is the (Borel-) Lebesgue
measure, we write $L^p(\Omega, X)$. We define the Sobolev space of order
$k \in \bN_0$ as $W^{k,p}(\Omega,\R^n)$, where $W^{0,p}:=L^p$.

Let $G\subset\R^2$ be the wedge domain defined in \eqref{domain_G} and 
let $\rho=\rho(x_1,x_2) = |(x_1,x_2)|$. 
We set
$$K^m_{p,\gamma}(G, \mathbb{R}^2):= \{ u \in
L^1_{loc}(G, \mathbb{R}^2): \ \rho^{|\alpha|-m} \partial^\alpha u 
\in L^p_\gamma(G, \mathbb{R}^2), \ |\alpha| \leq m \}$$
where $\alpha\in\N^m$ denotes a multiindex, $\gamma\in\R$,
and $L^p_\gamma(G, \mathbb{R}^2)$ is defined as in (\ref{defkondsp}). 
Then $K^m_{p,\gamma}(G,\R^2)$ equipped with
$$ \|u \|_{K^m_{p,\gamma}}:=\|u \|_{K^m_{p,\gamma}(G,\R^2)}:= \biggl(\sum_{|\alpha| \leq m} \|
\rho^{|\alpha|-m} \partial^\alpha u \|_{L^p_\gamma(G,\R^2)}^p\biggr)^{1/p}$$
is a Banach space. 
We also set $K^m_p(G,\R^2):=K^m_{p,0}(G,\R^2)$. 
Let $1<p<\infty$ with $1/p + 1/p'=1$. If $u\in L^p(\Om ,\bR^2)$ and $v
\in L^{p'}(\Om, \bR^2) $ we denote the duality pairing by $(u, v):= (u, v)_\Om := \int_\Om u v dx$.
For a family $(x_j)_{j\ge 1}$ of elements in a linear space $X$, we denote by
\[
	\la x_j\ra_{j\ge 1}=\la x_1,x_2,\ldots\ra
\]
its linear hull. 

For Banach spaces $X$, $Y$ the space of bounded linear 
operators from $X$ to $Y$ is denoted by $\mathscr{L}(X,Y)$, where
$\mathscr{L}(X):=\mathscr{L}(X,X)$. The subclass of isomorphisms is
denoted by $\mathscr{L}_{is}(X,Y)$ or $\mathscr{L}_{is}(X)$,
respectively. 
If $X'$ is the dual space of $X$, then we use for the corresponding
duality pairing the notation 
\[
	\la x',\, x\ra_{X',X},\quad x\in X,\ x'\in X'.
\]
We denote for a linear operator $A$ in $X$ domain and
range by $D(A)$ and $R(A)$. Its spectrum, point spectrum, 
and resolvent set are written 
as $\sigma(A)$, $\sigma_p(A)$, and $\rho(A)$.
We say that an operator $A:D(A)\subset X\to X$ is sectorial, if
$\overline{D(A)}=\overline{R(A)}=X$, $(0,\infty)\subset\rho(-A)$, 
and the family $(\lambda(\lambda+A)^{-1})_{\lambda>0}$ is uniformly bounded.
If the latter family is $\cR$-bounded, then we call $A$
$\cR$-sectorial. By $\phi_A$ and $\phi^{\cR}_A$ we denote 
the corresponding spectral and $\cR$-angle, respectively 
\cite{Kalton-Weis:Operator-Sums,Denk-Hieber-Pruess:Maximal-Regularity,
Kunstmann}.

In this note we also employ elements of the $\cH^\infty$-calculus
(e.g.\ in Theorem~\ref{theorem_haupt}). 
By $\mathcal{H}^\infty(X)$ we denote the class of all operators
$A$ in $X$ admitting a bounded $\mathcal{H}^\infty$-calculus on $X$.
The corresponding $\mathcal{H}^\infty$-angle is denoted by 
$\phi^\infty_A$. We refer to
\cite{Kalton-Weis:Operator-Sums,Denk-Hieber-Pruess:Maximal-Regularity,
Kunstmann, Pruss_Simonett} for an introduction into $\cH^\infty$-calculus,
$\cR$-boundedness, and related notions.

\subsection{Transformation of the elliptic linear problem}\label{first}
In this section we transform the elliptic linear problem
\eqref{lapequ} on a two-dimensional wedge domain onto a layer domain 
of the form $\Omega:=\bR \times I$. 
If $\theta_0$ denotes the angle of the wedge
$G$ we set $I:=(0, \theta_0)$. In the first step we introduce polar coordinates
whereas in the second step we employ the Euler transformation. Last we 
rescale the appearing terms such that we can work in the transformed setting 
in unweighted $L^p$-spaces. 

We write the inverse of the transform to polar coordinates as 
	$$ \psi_P: \mathbb{R}_+ \times I \rightarrow G, \ \ \ (r, \theta) \mapsto (r \cos \theta, 
	r \sin \theta) =(x_1, x_2) $$ 	
with the associated orthogonal basis 
	$$ e_r= \begin{pmatrix} \cos \theta \\ \sin \theta \end{pmatrix}, \ \ \ \ \ e_\theta = 
	\begin{pmatrix} -\sin \theta \\ \cos \theta \end{pmatrix}. $$
We identify the orthogonal transformation matrix $\cO$ of the components of a vector field as 
\begin{equation*}
	\mathcal{O} = \begin{pmatrix} \cos \theta & - \sin \theta \\ \sin \theta & \cos \theta 
	\end{pmatrix} .	
\end{equation*}
Next, we employ Euler transformation $r=e^x$ in radial direction, where
by an abuse of notation we write $x \in \bR$ for the new variable. We set
	$$ \psi_E : \Omega \rightarrow \mathbb{R}_+ \times I, \ \ \ (x, \theta) \mapsto (e^x, 
	\theta) =: (r, \theta). $$
It is not difficult to see that 
	$$\psi:= \psi_P \circ \psi_E: \Omega \rightarrow G$$
is a diffeomorphism. We set 
\[
	\Psi u:= u\circ\psi \quad\text{and}\quad
	\Psi^{-1} v:= v\circ \psi^{-1}.
\]
For $\alpha\in\R$ we also denote the multiplication operator by
\[
	M_\alpha v:=e^{\alpha x} v.
\]
Analogous to \cite{Maier} we define 
pull back resp.\ push forward by
\begin{equation}
	v:= \Theta^{*}_pu:= M_{-\beta_p} \mathcal{O}^{-1} \Psi u  \ \ \
	\text{resp.} \ \ u= \Theta_*^pv=
	\Psi^{-1}\mathcal{O}M_{\beta_p} v 
\label{pullback}
\end{equation}
with $\beta_p \in \bR$ to be chosen later.
Then the transformed Laplacian, computed straight
forwardly, is given as
\begin{equation*}
			\Theta^*_p(\Delta u)=   e^{- 2x} \begin{pmatrix}
			r_p(\partial_x) v_x +
			\partial^2_\theta v_x - 
			v_x - 2 \partial_\theta v_\theta \\
			r_p(\partial_x) v_\theta  +\partial^2_\theta v_\theta  - v_\theta + 2\partial_\theta 
			v_x \end{pmatrix}
\label{nummer_transproblem}
\end{equation*}
with the polynomial	
\begin{equation}\label{defpoly}
r_p(\partial_x):= \partial_x^2+ 2 \beta_p \partial_x + \beta_p^2.
\end{equation}
To absorb the factor $e^{-2x}$, we put 
\begin{equation}
	g=(g_x, g_\theta):= \widetilde{\Theta}^*_pf := e^{2x} \Theta^*_p f
\label{substitution}
\end{equation}
so that
\begin{equation*}
	\int_{\mathbb{R}} | g(x, \theta) |^p dx  =  \int_0^\infty |
	r^{2-\beta_p} \mathcal{O}^{-1} f(\psi_p(r, \theta)) |^p
	\frac{dr}{r}.
\end{equation*}
Then by the choice $p(2-\beta_p)= \gamma + 2$, that is 
\begin{equation}
		\beta_p =2 - \frac{2+\gamma}{p},
	\label{bedingung}
\end{equation}
we see
that in the transformed setting we can work in an unweighted $L^p$-space, 
see \cite{Pruss,Maier}. Notice that by this choice of $\beta_p$
also pull back and push forward depend on $p$, i.e., the corresponding
families are not consistent in $p$.

Finally, we transform the boundary conditions $ \nu \cdot u= 0, \ \text{rot} \ u =0 \ \text{on} \  \partial G$ of the problem \eqref{lapequ} 
to the result that
	$$ \partial_\theta v_x =0,  \ \ \  v_\theta=0 \ \ \ \text{on} \ \ \partial \Omega =\bR \times \{0, \theta_0 \}.  $$
Summarizing,  we receive the following transformed problem
on $\Omega=\R\times I$:
\begin{equation}
	\left.
	\begin{aligned}
		r_p(\partial_x) v_x + \partial^2_\theta v_x - v_x - 2 \partial_\theta v_\theta &= g_x \ \ \text{in} \ \Omega, \\
		r_p(\partial_x) v_\theta +\partial^2_\theta v_\theta - v_\theta + 2 \partial_\theta v_x &= g_\theta \ \ \ \text{in} \ \Omega, \\
		\partial_\theta v_x=0, \ \ v_\theta &=0 \ \ \ \text{on} \ \partial \Omega.		
	\end{aligned}
	\right\}
	\label{prob_transformiert}
\end{equation}

\subsection{Optimal elliptic regularity for the transformed problem}
\label{sec_prob_transformiert}

Here we consider problem (\ref{prob_transformiert}).
In this subsection we frequently identify $L^p(\Omega,\bR^2)$
with its isometrically isomorphic version
$L^p\bigl(\bR,L^p(I,\bR^2)\bigr)$, often without further notice.
We introduce the operators associated to the single 
parts in \eqref{prob_transformiert}:
\\[2mm] 
(1)\ Let $r_p$ be the polynomial given in (\ref{defpoly}). 
We define $\cT_{p,x}$ in $L^p(\mathbb{R})$ by means of
		$$\cT_{p,x}u := r_p(\partial_x ) u, 
		\quad u \in D(\cT_{p,x}):=W^{2,p}(\mathbb{R}).$$ 
The spectrum of $\cT_{p,x}$ is given by the 
parabola $r_p(i\mathbb{R})$ which is
symmetric about the real axis, open to the left and has its intersection
point with the $x$-axis at $\beta_p^2$ with $\beta_p$ as in \eqref{bedingung}. 
It is straight forward to show that $-\cT_{p,x}+b \in \mathcal{H}^\infty
(L^p(\mathbb{R}))$ for $b > \beta_p^2$ with $ \phi_{-\cT_{p,x}+b}^\infty <
\pi/2$, e.g., by the use of Fourier transform, see also
\cite{Pruss,Maier}. By means of operator-valued Fourier multiplier
results \cite{Weis,Denk-Hieber-Pruess:Maximal-Regularity,Kunstmann} 
these facts obviously transfer to the vector-valued
version on $L^p(\bR,L^p(I,\bR^2))$ given as
\[
	T_{p,x}u := \cT_{p,x} u, \quad u \in D(T_x):=W^{2,p}(\bR,L^p(I,\bR^2)).
\]
(2)\ We define $\cT_{p,\theta}$ in $L^p(I, \mathbb{R}^2)$ by
				$$\cT_{p,\theta}v:= \begin{pmatrix} \partial_{\theta}^2-1  & -2\partial_{\theta} \\  2 \partial_{\theta}    &  \partial_\theta^2  -1 \end{pmatrix} v  $$		
			on $D(\cT_{p,\theta}):=\{v =(v_x, v_\theta) \in
			W^{2,p}(I, \mathbb{R}^2):
			\partial_{\theta}v_x=0,\ v_\theta=0 \ \text{on}\
			\partial I \}$. 
It is also straight forward to identify
\begin{equation}\label{spectt} 
	\sigma(\cT_{p,\theta})=\sigma_p(\cT_{p,\theta})
	=  \left \{ -\left( \frac{k\pi}{\theta_0} \pm 1 \right )^2: k\in \mathbb{N} \right \} \cup \{-1\} 
\end{equation}
as its spectrum with corresponding eigenfunctions
$(v_x^k,v_\theta^k)^\tau$, where
$$v_x^k(\theta):=\cos \left (\frac{k\pi}{\theta_0}\theta \right ), \ \ \
v_\theta^k(\theta):= \pm \sin \left(\frac{k\pi}{\theta_0} \theta \right
),\ \ k \in \mathbb{N}_0 , \ \theta \in I,$$
see also \cite{Maier}. 
Note that $\cT_{p,\theta} $ is self-adjoint in $L^2(I,\bR^2)$. 
Hence the eigenfunctions represent a basis of $L^2(I, \bR^2)$. 
We denote by $(\lambda_i)_{i\in\bN_0}$ the set of eigenvalues,
i.e., $(\lambda_i)_{i\in\bN_0}=\sigma(\cT_{p,\theta})$ such that
$\lambda_0=-1$ and $\lambda_1>\lambda_2>\ldots$.
Setting $e_0:=\left(1/\sqrt{\theta_0},0\right)^\tau$ 
and $e_i:= \frac{\widetilde{e_i}}{\sqrt{\theta_0}}$ for $i\in\N$ 
where $\widetilde e_i$ 
denotes the eigenfunction to the eigenvalue $\lambda_i$, we have
$$ ( e_i, e_j) = \frac{1}{\theta_0} \int_0^{\theta_0}
\widetilde{e_i}\cdot \widetilde{e_j} \ d \theta
=\delta_{ij}.$$
By Fourier series techniques it is also standard to prove that
$-\cT_{p,\theta}$ admits an  $\mathcal{H}^\infty$-calculus on $ L^p(I,
\mathbb{R}^2) $ with $\phi_{-\cT_{p,\theta}}^\infty =0$. The
same properties remain valid for the canonical extension to
$L^p(\bR,L^p(I,\bR^2))$ denoted by
\[
	T_{p,\theta} v:=\cT_{p,\theta} v,\quad
	D(T_{p,\theta}):=L^p(\bR,D(\cT_{p,\theta})).
\]

Optimal regularity for (\ref{prob_transformiert}) is then 
reduced to invertibility of the operator
\begin{equation}\label{sumlxlt}
	T_p:=T_{p,x}+T_{p,\theta}:D(T_{p,x})\cap D(T_{p,\theta})\to L^p(\Omega,\bR^2),
\end{equation}
if we can also show that
\begin{equation}\label{domsumlxlt}
\begin{split}
D(T_{p})&:= \left\{ v= (v_x, v_\theta) \in W^{2,p}(\mathbb{R}
\times I, \mathbb{R}^2), \ \partial_\theta v_x =v_\theta=0 \ \text{on} \
\partial \Omega  \right\}\\
&=D(T_{p,x})\cap D(T_{p,\theta}).
\end{split}
\end{equation}

The proof of these facts requires some preparation. 
Let 
\begin{equation}\label{defpmproj}
	P_{m,p}^cu:= \Sigma_{i=1}^m (u, e_i)e_i
\end{equation}
be the projection of $u \in L^p(I, \bR^2)$
to $ \la e_1, ..., e_m\ra$. 
We set $P_{m,p}:=1-P_{m,p}^c$ and 
$E^p_m:=P_{m,p}\left(L^p(I,\bR^2)\right)$, i.e., $E^p_m$ is the
complement to $\la e_1,...,e_m\ra$. Note that $(P_{m,p})_{1<p<\infty}$
is a consistent family. By this fact we omit the index $p$ and 
write just $P_m$.
We denote the extension of $P_{m}$ to $L^p(\bR,L^p(I,\bR^2))$ by $\mP_{m}$.
Obviously then $\mathbb{P}_{m}\in \mathscr{L}(L^p(\Omega, \mathbb{R}^2))$ 
is a projector as well and we have
\begin{equation}
\label{lemm_projektion}
	L^p(\Omega, \mathbb{R}^2) = L^p(\mathbb{R},
	\la e_1,...,e_m\ra) \oplus L^p(\mathbb{R},E^p_m).
\end{equation}
The following properties are obvious.
\begin{lemma}\label{lemm_kommutativ}
	Let $T_{p,x}$ and $T_{p,\theta}$ in $L^p(\Omega, \bR^2)$ for
	$1<p<\infty$ be defined as above and let $b>\beta_p^2$
	with $\beta_p$ as given in (\ref{bedingung}).  
	Then we have
		\begin{enumerate}
			\item $\mathbb{P}_{m}u\in D(T_{p,i})$ and
			$\mathbb{P}_{m} T_{p,i} u= T_{p,i}\mathbb{P}_{m}u$
			for $u\in D( T_{p,i})$ and $i\in \{\theta, x\}$;
			\item $-T_{p,x} + b, -T_{p,\theta} \in \mathcal{H}^\infty(L^p(\mathbb{R},E^p_m)) \cap 
			\mathcal{H}^\infty (L^p(\mathbb{R}, \la
			e_1,...,e_m\ra ))$ with the 
			corresponding angles $\phi_{-T_{p,x}+b}^\infty < \frac{\pi}{2}$ and $\phi_{-T_{p,\theta}}^\infty=0$;
			\item $\mathbb{P}_{m} R(\lambda, T_{p,i}) = R(\lambda, T_{p,i}) \mathbb{P}_{m}$ for 
			$\lambda \in \rho (T_{p,i})$ and $i\in\{\theta, x\}$;
			\item $(\lambda- T_{p,x})^{-1} 
			(\mu- T_{p,\theta})^{-1}
			= (\mu- T_{p,\theta})^{-1} 
			(\lambda- T_{p,x})^{-1}$ for 
			$\lambda \in \rho( T_{p,x} )$ and $\mu \in \rho(T_{p,\theta}).$
\end{enumerate}
\end{lemma}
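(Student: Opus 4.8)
The plan is to leverage that $T_{p,x}$ involves only the variable $x\in\R$, whereas $T_{p,\theta}$ and $\mathbb{P}_m$ involve only the variable $\theta\in I$, together with the fact that $\mathbb{P}_m$ is built from eigenfunctions of the selfadjoint operator $\cT_{p,\theta}$. For (1): $T_{p,x}$ and $\mathbb{P}_m$ are, respectively, the canonical extensions to $L^p(\R,L^p(I,\R^2))$ of $\cT_{p,x}=r_p(\partial_x)$ on $L^p(\R)$ and of the bounded projection $P_m\in\mathscr{L}(L^p(I,\R^2))$, so they act on complementary "factors"; hence for $u\in D(T_{p,x})=W^{2,p}(\R,L^p(I,\R^2))$ one has $\mathbb{P}_mu\in W^{2,p}(\R,L^p(I,\R^2))$ with $\partial_x^j(\mathbb{P}_mu)=\mathbb{P}_m\partial_x^ju$, and therefore $\mathbb{P}_mT_{p,x}u=T_{p,x}\mathbb{P}_mu$. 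For $T_{p,\theta}$ I would write $P_m=1-P_m^c$ with $P_m^cu=\sum_{i=1}^m(u,e_i)e_i$; each $e_i$ is a smooth trigonometric pair lying in $D(\cT_{p,\theta})$, and since $\cT_{p,\theta}$ is symmetric on its domain one gets $(\cT_{p,\theta}u,e_i)=\lambda_i(u,e_i)$ for $u\in D(\cT_{p,\theta})$; hence $P_m^c$, and thus $P_m$, maps $D(\cT_{p,\theta})$ into itself and commutes with $\cT_{p,\theta}$ there. Passing to $D(T_{p,\theta})=L^p(\R,D(\cT_{p,\theta}))$ gives $\mathbb{P}_mD(T_{p,\theta})\subset D(T_{p,\theta})$ and $\mathbb{P}_mT_{p,\theta}=T_{p,\theta}\mathbb{P}_m$.

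Property (3) is then a formal consequence of (1): for $\lambda\in\rho(T_{p,i})$, $i\in\{x,\theta\}$, and $v\in L^p(\Omega,\R^2)$ put $u:=R(\lambda,T_{p,i})v\in D(T_{p,i})$; applying $\mathbb{P}_m$ to $(\lambda-T_{p,i})u=v$ and using (1) yields $(\lambda-T_{p,i})\mathbb{P}_mu=\mathbb{P}_mv$, i.e.\ $\mathbb{P}_mR(\lambda,T_{p,i})v=R(\lambda,T_{p,i})\mathbb{P}_mv$. With (3) in hand, (2) follows from the permanence of the $\mathcal{H}^\infty$-calculus under restriction to a complemented invariant subspace: the discussion preceding the lemma gives $-T_{p,x}+b\in\mathcal{H}^\infty(L^p(\Omega,\R^2))$ with angle $<\pi/2$ and $-T_{p,\theta}\in\mathcal{H}^\infty(L^p(\Omega,\R^2))$ with angle $0$; by (3) the two closed complementary subspaces $L^p(\R,E^p_m)=\mathbb{P}_mL^p(\Omega,\R^2)$ and $L^p(\R,\langle e_1,\dots,e_m\rangle)=(1-\mathbb{P}_m)L^p(\Omega,\R^2)$ from \eqref{lemm_projektion} reduce both operators, and since $\mathbb{P}_m$ commutes with the Cauchy integrals defining $f(-T_{p,x}+b)$ and $f(-T_{p,\theta})$, these functional calculi restrict to the two subspaces with no increase of the angles.

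Finally, (4) is once more the "separate variables" argument: for $\lambda\in\rho(T_{p,x})$ and $\mu\in\rho(T_{p,\theta})$ one has the fibrewise representation $\bigl(R(\mu,T_{p,\theta})v\bigr)(x)=R(\mu,\cT_{p,\theta})\,v(x)$ for a.e.\ $x$, while $R(\lambda,T_{p,x})$ is the scalar Fourier multiplier with symbol $(\lambda-r_p(i\xi))^{-1}$ acting in $x$; since $R(\mu,\cT_{p,\theta})$ is a fixed bounded operator on $L^p(I,\R^2)$, independent of $x$, it commutes with the $x$-multiplier, which is exactly the claimed identity $(\lambda-T_{p,x})^{-1}(\mu-T_{p,\theta})^{-1}=(\mu-T_{p,\theta})^{-1}(\lambda-T_{p,x})^{-1}$. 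Equivalently, in the tensor picture $L^p(\R,L^p(I,\R^2))$ these resolvents have the form $A\otimes\mathrm{Id}$ and $\mathrm{Id}\otimes B$, which commute.

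As the authors note, none of this is deep, so I do not expect a real obstacle; the only points that need a little care are, in (2), verifying that the restricted calculus really is the $\mathcal{H}^\infty$-calculus of the restricted operator — which is precisely why (3) is established first — and, throughout, making the phrase "canonical extension" precise, i.e.\ identifying $T_{p,x}$ with $\cT_{p,x}\otimes\mathrm{Id}$ and $T_{p,\theta}$ with $\mathrm{Id}\otimes\cT_{p,\theta}$ on $L^p(\R,L^p(I,\R^2))$, so that their domains, resolvents and mutual commutation behave as expected.
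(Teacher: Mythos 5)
Your proof is correct and supplies exactly the standard arguments the paper has in mind; the paper itself gives no proof at all, simply declaring these properties ``obvious''. All four of your steps --- the tensor-structure commutation for $T_{p,x}$, the symmetry of $\cT_{p,\theta}$ paired against its eigenfunctions for $T_{p,\theta}$, deducing (3) from (1) and then (2) by restricting the $\mathcal H^\infty$-calculus to the complemented invariant subspaces, and the fibrewise/Fourier-multiplier description of the two resolvents for (4) --- are sound.
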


The domains of the Operators $T_{p,x}$ and $T_{p,\theta}$
in the subspace $L^p(\R,E^p_m)$ are defined as
\begin{equation}\label{domstm}
\begin{split}
	D_m(T_{p,x})&:=D(T_{p,x})\cap L^p(\R,E^p_m)
	\quad\text{and}\\
	D_m(T_{p,\theta})&:=D(T_{p,\theta})\cap L^p(\R,E^p_m)
\end{split}
\end{equation}
respectively. The assertions of Lemma~\ref{lemm_kommutativ} then
easily yield
\begin{corollary}\label{lemprojdoms}
The operator $\mathbb P_{m}$ is a projector on $D(T_{p,i})$
and we have
\begin{enumerate}
\item $D_m(T_{p,i})=\mathbb P_{m}\left(D(T_{p,i})\right)$,
\item $D(T_{p,i})=D_m(T_{p,i})\oplus (1- \mathbb P_{m})D(T_{p,i})$
\end{enumerate}
for $i\in\{\theta,x\}$. 
\end{corollary}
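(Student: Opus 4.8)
This corollary is a formal consequence of Lemma~\ref{lemm_kommutativ}, so the plan is mainly bookkeeping. First I would equip each domain $D(T_{p,i})$, $i\in\{\theta,x\}$, with its graph norm
\[
\|u\|_{D(T_{p,i})}:=\|u\|_{L^p(\Omega,\bR^2)}+\|T_{p,i}u\|_{L^p(\Omega,\bR^2)},
\]
which turns it into a Banach space continuously embedded into $L^p(\Omega,\bR^2)$, since $T_{p,x}$ and $T_{p,\theta}$ are closed (being members of the relevant $\mathcal{H}^\infty$-classes by Lemma~\ref{lemm_kommutativ}(2), up to a shift). By Lemma~\ref{lemm_kommutativ}(1), $\mathbb P_m$ leaves $D(T_{p,i})$ invariant and commutes with $T_{p,i}$ there, hence
\[
\|\mathbb P_m u\|_{D(T_{p,i})}=\|\mathbb P_m u\|_{L^p}+\|\mathbb P_m T_{p,i}u\|_{L^p}\le\|\mathbb P_m\|_{\mathscr L(L^p(\Omega,\bR^2))}\,\|u\|_{D(T_{p,i})},\qquad u\in D(T_{p,i}).
\]
Thus $\mathbb P_m|_{D(T_{p,i})}$ is bounded, and since $\mathbb P_m$ is idempotent on $L^p(\Omega,\bR^2)\supseteq D(T_{p,i})$, it is a bounded projector on $D(T_{p,i})$; this is the first assertion of the corollary.

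For (1) I would compute the range of this restricted projector. If $u\in D(T_{p,i})$, then $\mathbb P_m u\in D(T_{p,i})$ and $\mathbb P_m u\in\mathbb P_m\bigl(L^p(\Omega,\bR^2)\bigr)=L^p(\R,E^p_m)$ by \eqref{lemm_projektion}, so $\mathbb P_m u\in D_m(T_{p,i})$ by the definition \eqref{domstm}; this gives $\mathbb P_m\bigl(D(T_{p,i})\bigr)\subseteq D_m(T_{p,i})$. Conversely, if $v\in D_m(T_{p,i})=D(T_{p,i})\cap L^p(\R,E^p_m)$, then $v$ already lies in the range of $\mathbb P_m$ on $L^p(\Omega,\bR^2)$, so $\mathbb P_m v=v$ and hence $v\in\mathbb P_m\bigl(D(T_{p,i})\bigr)$. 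Therefore $D_m(T_{p,i})=\mathbb P_m\bigl(D(T_{p,i})\bigr)$.

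Assertion (2) is then the standard splitting induced by a bounded projector on a Banach space: one has the topological direct sum
\[
D(T_{p,i})=\mathbb P_m\bigl(D(T_{p,i})\bigr)\oplus(1-\mathbb P_m)\bigl(D(T_{p,i})\bigr)=D_m(T_{p,i})\oplus(1-\mathbb P_m)D(T_{p,i}),
\]
where directness comes from $\mathbb P_m^2=\mathbb P_m$ and closedness of both summands in the graph norm from the boundedness estimate above. If desired, one records in addition --- exactly as for \eqref{lemm_projektion} --- that $(1-\mathbb P_m)D(T_{p,i})=D(T_{p,i})\cap L^p(\R,\la e_1,\ldots,e_m\ra)$.

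Since every step reduces to the invariance and commutation relations already established in Lemma~\ref{lemm_kommutativ}, there is no genuine obstacle here; the only point requiring a little care is to carry out the argument in the graph-norm topology, so that the decomposition in~(2) consists of closed subspaces and not merely of algebraic complements --- which is precisely what the boundedness of $\mathbb P_m|_{D(T_{p,i})}$ from the first step guarantees.
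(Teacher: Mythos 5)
Your argument is correct and is exactly the route the paper intends: the paper gives no proof, stating only that the corollary follows easily from Lemma~\ref{lemm_kommutativ}, and your write-up simply makes explicit the invariance/commutation of $\mathbb P_m$ with $T_{p,i}$, the range identification via \eqref{lemm_projektion} and \eqref{domstm}, and the standard splitting induced by a bounded idempotent. The graph-norm bookkeeping is a sensible way to record that the decomposition in (2) is topological, not merely algebraic.
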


We will characterize the invertibility of the operator in (\ref{sumlxlt}) by 
employing the operator sum method. More precisely, we apply
\cite[Proposition~3.5]{nauasaal} which is obtained as a consequence of
the Kalton-Weis theorem \cite[Corollary~5.4]{Kalton-Weis:Operator-Sums}.

\begin{theorem} \label{theorem_haupt}
	Let $1<p< \infty$ and $\beta_p = 2- \frac{2+\gamma}{p}$.
	Then
			$$T_{p,\theta}+T_{p,x} \in
			\mathscr{L}_{is}\left(D(T_{p,\theta})\cap D(T_{p,x}),
			L^p(\Omega, \mathbb{R}^2)\right)$$	
	if and only if $-\beta_p^2\not\in  
	\sigma(T_{p,\theta})$. 
\end{theorem}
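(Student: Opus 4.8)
The plan is to deduce Theorem~\ref{theorem_haupt} from the operator sum method, specifically from \cite[Proposition~3.5]{nauasaal}. First I would recall the setup: by Lemma~\ref{lemm_kommutativ}(2) the shifted operator $-T_{p,x}+b$ for $b>\beta_p^2$ and the operator $-T_{p,\theta}$ both admit a bounded $\mathcal{H}^\infty$-calculus on $L^p(\Omega,\R^2)$ (and on each of the invariant subspaces $L^p(\R,E^p_m)$ and $L^p(\R,\langle e_1,\dots,e_m\rangle)$) with angles $\phi^\infty_{-T_{p,x}+b}<\pi/2$ and $\phi^\infty_{-T_{p,\theta}}=0$. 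Since the sum of these two angles is strictly less than $\pi$, and by Lemma~\ref{lemm_kommutativ}(4) the resolvents commute, the Kalton-Weis theorem applies and yields that $-T_{p,x}+b-T_{p,\theta}=-(T_{p,x}+T_{p,\theta})+b$ is closed and sectorial on $D(T_{p,x})\cap D(T_{p,\theta})$ with an $\mathcal{H}^\infty$-calculus, and in particular the closure issue in the definition of the operator sum in \eqref{sumlxlt} is automatic, so that the algebraic sum $T_{p,x}+T_{p,\theta}$ on $D(T_{p,x})\cap D(T_{p,\theta})$ is already closed. This is the qualitative part; it gives everything except invertibility at $0$.

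Next I would turn to the quantitative statement, i.e.\ invertibility of $T_p=T_{p,x}+T_{p,\theta}$. The natural route is a spectral decomposition in the $\theta$-variable. Using the projections $\mathbb{P}_m$ of \eqref{defpmproj} and the splitting \eqref{lemm_projektion}, together with Corollary~\ref{lemprojdoms}, the operator $T_p$ leaves both summands of $L^p(\Omega,\R^2)=L^p(\R,\langle e_1,\dots,e_m\rangle)\oplus L^p(\R,E^p_m)$ invariant, so $T_p$ is invertible if and only if both restrictions are. On the finite-dimensional piece $L^p(\R,\langle e_1,\dots,e_m\rangle)$ the operator $T_{p,\theta}$ acts diagonally with eigenvalues $\lambda_1,\dots,\lambda_m$, so that restriction decomposes further into scalar operators $T_{p,x}+\lambda_i=\partial_x^2+2\beta_p\partial_x+\beta_p^2+\lambda_i$ on $L^p(\R)$, each of which is invertible precisely when the polynomial $\xi\mapsto -\xi^2+2i\beta_p\xi+\beta_p^2+\lambda_i=-(\,\xi-i\beta_p)^2+\beta_p^2+\beta_p^2+\lambda_i$ never vanishes on $\R$, i.e.\ exactly when $-\beta_p^2\neq\lambda_i$. (Here one sees the symbol of $T_{p,x}$ is the parabola $r_p(i\R)$ meeting the real axis at $\beta_p^2$.) On the complement $L^p(\R,E^p_m)$, choosing $m$ large enough that all remaining eigenvalues $\lambda_i$ with $i>m$ satisfy $\lambda_i<-\beta_p^2$ (possible since $\lambda_i\to-\infty$), one has $\sigma(-T_{p,\theta})\subset(\beta_p^2,\infty)$ on that subspace, hence $-T_{p,\theta}-\beta_p^2$ is boundedly invertible and sectorial there; combined with $-T_{p,x}+\beta_p^2$ being sectorial of angle $<\pi/2$ with $0$ in its resolvent set after the shift, the Kalton-Weis machinery gives invertibility of the sum $-(T_{p,x}+T_{p,\theta})$ on $L^p(\R,E^p_m)$ outright. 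Putting the two pieces together shows $T_p\in\mathscr{L}_{is}(D(T_{p,x})\cap D(T_{p,\theta}),L^p(\Omega,\R^2))$ if and only if $-\beta_p^2\notin\{\lambda_1,\dots,\lambda_m\}=\sigma(T_{p,\theta})\cap(-\beta_p^2,\infty)$, which is the same as $-\beta_p^2\notin\sigma(T_{p,\theta})$ since the eigenvalues below $-\beta_p^2$ are harmless.

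The remaining point is to justify \eqref{domsumlxlt}, i.e.\ that the domain of the closed sum really is $D(T_{p,x})\cap D(T_{p,\theta})=\{v\in W^{2,p}(\Omega,\R^2):\partial_\theta v_x=v_\theta=0\text{ on }\partial\Omega\}$ with equivalence of norms; this is part of what the Kalton-Weis/Dore-Venni type statement in \cite[Proposition~3.5]{nauasaal} delivers, namely that on the subspace where the sum is invertible the graph norm of $T_{p,x}+T_{p,\theta}$ is equivalent to $\|T_{p,x}\cdot\|+\|T_{p,\theta}\cdot\|$, so that $D(T_p)=D(T_{p,x})\cap D(T_{p,\theta})$; combined with the explicit description \eqref{domstm} of the two domains and elliptic regularity in the $\theta$-variable this gives the stated $W^{2,p}$ characterization. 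I expect the main obstacle, or at least the part needing most care, to be precisely this domain identification on the ``bad'' finite-dimensional part: when $-\beta_p^2$ is close to an eglenvalue the sum is still closed but the constants degenerate, and one must be sure that the decomposition via $\mathbb{P}_m$ is compatible with the domain splitting in Corollary~\ref{lemprojdoms} so that no regularity is lost when reassembling; the genuinely analytic input — the $\mathcal{H}^\infty$-calculus with good angles and the commuting resolvents — is already provided by Lemma~\ref{lemm_kommutativ}, so the argument is mostly a bookkeeping of the spectral splitting plus one invocation of the operator sum theorem.
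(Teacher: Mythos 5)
Your proposal is correct and follows essentially the same route as the paper: split $L^p(\Omega,\R^2)$ via $\mathbb P_m$ with $m$ chosen so that the remaining eigenvalues of $T_{p,\theta}$ lie below $-b<-\beta_p^2$, apply the Kalton--Weis sum theorem on $L^p(\R,E^p_m)$, treat the finite-dimensional part through the scalar operators $r_p(\partial_x)+\lambda_i$, and derive necessity from the vanishing of the symbol at $(\lambda,\xi)=(-\beta_p^2,0)$. The only cosmetic difference is that the paper establishes the identification $D(T_p)=D(T_{p,x})\cap D(T_{p,\theta})$ in a separate lemma (via extension to $\R^2$ and Mikhlin's theorem) rather than extracting it from the sum theorem, and that identification is not actually needed for the statement as formulated.
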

\begin{proof} 
Assume that $-\beta_p^2\not\in \sigma(T_{p,\theta})$
and that $b>\beta_p^2$.
The fact that $-\beta_p^2\not\in  \sigma(T_{p,\theta})$ guarantees  
\begin{equation}\label{isececond}
\sigma(-T_{p,x}) \cap \sigma(T_{p,\theta}) =\emptyset.
\end{equation}
We first show that $-T_{p,\theta}-T_{p,x}-\varepsilon \in \mathcal{H}^\infty
(L^p(\mathbb{R},E^p_m))$ for some $\varepsilon >0$, which 
essentially gives the assertion. 

To this end, pick $m \in \mathbb{N}_0$ so that $- \lambda_{m+1}>b$ with
$\lambda_{m+1} \in \sigma(T_{p,\theta})$. This implies 
$\sigma(-T_{p,\theta}) \subset (b, \infty)$ on  $L^p(\mathbb{R}, E^p_m)$ 
and hence $0 \in \rho(-T_{p,\theta}-b-\eps)$ for some $\eps>0$.
This fact, Lemma~\ref{lemm_kommutativ}(2) and a 
standard perturbation argument for $\cH^\infty$-calculus \cite[Corollary
5.5.5]{Haase} yield that the shifted operator $-T_{p,\theta}-b-\varepsilon$ 
still satisfies 
	$$-T_{p,\theta}-b-\varepsilon \in \mathcal{H}^\infty(L^p(\mathbb{R}, E^p
	_m)) \ \text{with}\ 
	\phi_{-T_{p,\theta}-b-\varepsilon}^\infty=0.$$
Thanks to Lemma~\ref{lemm_kommutativ}(2), which yields 
$\phi_{-T_{p,\theta}-b-\varepsilon}^\infty + \phi_{-T_{p,x}+b}^\infty <
\pi$, and to Lemma~\ref{lemm_kommutativ}(4) we may apply
\cite[Corollary~5.4]{Kalton-Weis:Operator-Sums} (see also
\cite[Proposition~3.5]{nauasaal}) to the result that
	$$ -T_{p,\theta}-T_{p,x} - \varepsilon 
	= -T_{p,\theta}-b - \varepsilon + (-T_{p,x}+b) \in \mathcal{H}^
	\infty(L^p(\mathbb{R},E^p_m))$$
with $\phi_{-T_{p,\theta}-T_{p,x}-\varepsilon}^{ \infty} \leq \ \text{max}
\{\phi_{-T_{p,\theta}-b}^\infty, \phi_{-T_{p,x}+b}^\infty\}$. 
Particularly, we obtain $0\in
\rho(-T_{p,\theta}-T_{p,x})$, hence 
\begin{equation}\label{invcem}
T_{p,\theta}+T_{p,x}\in\sLis\bigl(D_m(T_{p,x})\cap D_m(T_{p,\theta}),
\, L^p(\mathbb{R},E^p_m)\bigr). 
\end{equation}

For the invertibility of the operator 
$T_{p,\theta}+T_{p,x}$ on $L^p(\mathbb{R},  \la e_1, ..., e_m\ra)$
observe that due to (\ref{isececond}) we have $\lambda_i\in\rho(-T_{p,x})$
on $L^p(\Omega,\R^2)$
for each $\lambda_i\in\sigma_p(T_{p,\theta})$.
Thus 
\[
	\lambda_i+T_{p,x}:L^p(\mathbb{R}, \la e_1, ..., e_m\ra)
	\cap D(T_{p,x})
	\to L^p(\mathbb{R}, \la e_1, ..., e_m\ra)
\]
is invertible. By the fact that
\[
	(T_{p,x}+T_{p,\theta})^{-1}f=\sum_{i=1}^m (\lambda_i+T_{p,x})^{-1}
	(f, e_i) e_i,\ f\in L^p(\mathbb{R}, \la e_1, ...,
	e_m\ra),
\]
we conclude that 
\begin{equation}\label{inve1em}
T_{p,\theta}+T_{p,x}\in\sLis\bigl(L^p(\mathbb{R}, \la e_1, ..., e_m\ra)
\cap D(T_{p,x}),\, L^p(\mathbb{R}, \la e_1, ..., e_m\ra) \bigr).
\end{equation}
Gathering (\ref{lemm_projektion}), (\ref{invcem}), and (\ref{inve1em})
we end up with 
\[
T_{p,\theta}+T_{p,x}\in\sLis\bigl(D(T_{p,x})\cap D(T_{p,\theta}),\, L^p(\Omega,\R^2)\bigr).
\]

Now, assume that $-\beta_p^2\in \sigma(T_{p,\theta})$.
Then the symbol $\lambda+r_p(i\xi)$ of the operator $T_{p,\theta}+T_{p,x}$ 
vanishes exactly at $(\lambda,\xi)=(-\beta_p^2,0)$, where 
$\lambda\in\sigma(T_{p,\theta})$.
Thus, $(\lambda+r_p(i\cdot))^{-1}$ is not bounded, hence not
an $L^p(\R,L^p(I,\R^2))$-multiplier.
This gives the assertion.
\end{proof}

\begin{remark}\label{rem_ex_p}
An inspection of the proof of Theorem~\ref{theorem_haupt} shows 
that we even have that $-T_{p,x}-T_{p,\theta}-\varepsilon \in
\mathcal{H}^\infty(L^p(\Omega,\R^2))$ with
$\phi^\infty_{-T_{p,x}-T_{p,\theta}-\varepsilon}<\pi/2$ 
for some $\eps>0$.
\end{remark}

To obtain optimal regularity we show (\ref{domsumlxlt}).
\begin{lemma}\label{fullellreg}
Let $1<p< \infty$. Then we have
\begin{equation*}
D(T_p)=D(T_{p,\theta})\cap D(T_{p,x}).
\end{equation*}
\end{lemma}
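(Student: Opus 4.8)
The plan is to establish the two inclusions separately. Write $A:=-T_{p,x}+b$ with $b>\beta_p^2$ and $B:=-T_{p,\theta}$, both viewed as operators in $L^p(\Omega,\R^2)$; recall from \eqref{sumlxlt} that $T_p=T_{p,x}+T_{p,\theta}$ is defined on $D(T_{p,x})\cap D(T_{p,\theta})$, while $D(T_p)$ in \eqref{domsumlxlt} denotes the full second-order Sobolev space with the two boundary relations.

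\emph{The easy inclusion $D(T_p)\subset D(T_{p,x})\cap D(T_{p,\theta})$.} If $v\in W^{2,p}(\R\times I,\R^2)$ with $\partial_\theta v_x=v_\theta=0$ on $\partial\Omega$, then, identifying $L^p(\R\times I,\R^2)$ with both $L^p(\R,L^p(I,\R^2))$ and $L^p(I,L^p(\R,\R^2))$ via Fubini, the derivatives $\partial_x^j v$ ($j\le2$) lie in $L^p(\R,L^p(I,\R^2))$, so $v\in W^{2,p}(\R,L^p(I,\R^2))=D(T_{p,x})$; likewise the $\partial_\theta^j v$ ($j\le2$) together with the boundary relations give $v\in L^p(\R,D(\cT_{p,\theta}))=D(T_{p,\theta})$.

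\emph{The reverse inclusion.} Take $v\in D(T_{p,x})\cap D(T_{p,\theta})$; then $v,\partial_x v,\partial_x^2 v,\partial_\theta v,\partial_\theta^2 v\in L^p(\Omega,\R^2)$ and $\partial_\theta v_x=v_\theta=0$ on $\partial\Omega$, so it suffices to produce the mixed derivative $\partial_x\partial_\theta v\in L^p(\Omega,\R^2)$. To this end I would invoke a mixed-derivative estimate for the operator sum $A+B$. By Lemma~\ref{lemm_kommutativ}(2), $A\in\cH^\infty(L^p(\Omega,\R^2))$ with $\phi^\infty_A<\pi/2$ and $B\in\cH^\infty(L^p(\Omega,\R^2))$ with $\phi^\infty_B=0$; both are invertible ($A$ since $\sigma(T_{p,x})=r_p(i\R)$ has real part $\le\beta_p^2<b$, $B$ since $\sigma(T_{p,\theta})\subset(-\infty,-1]$ by \eqref{spectt}); and their resolvents commute by Lemma~\ref{lemm_kommutativ}(4). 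Since $\phi^\infty_A+\phi^\infty_B<\pi$, the Kalton--Weis operator-sum theorem \cite[Cor.~5.4]{Kalton-Weis:Operator-Sums} (see also \cite[Prop.~3.5]{nauasaal}, Remark~\ref{rem_ex_p}, and the mixed-derivative theorem in \cite{Pruss_Simonett,Denk-Hieber-Pruess:Maximal-Regularity}) shows that $A+B=b-T_p$ with domain $D(A)\cap D(B)$ is closed and invertible, and that
\[
\bigl\|A^{1/2}B^{1/2}(A+B)^{-1}\bigr\|_{\sL(L^p(\Omega,\R^2))}<\infty .
\]
Applying this to $v\in D(A)\cap D(B)=D(A+B)$ gives $v\in D(A^{1/2}B^{1/2})$, i.e.\ $v\in D(B^{1/2})$ with $B^{1/2}v\in D(A^{1/2})$, and $\|A^{1/2}B^{1/2}v\|_{L^p}\le C(\|Av\|_{L^p}+\|Bv\|_{L^p})$.

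\emph{Reading off the mixed derivative, and the main obstacle.} It remains to translate this into $\partial_x\partial_\theta v\in L^p(\Omega,\R^2)$, which is the delicate point. Since $A$ and $B$ have bounded $\cH^\infty$-calculus they possess bounded imaginary powers, so $D(A^{1/2})=[L^p(\Omega,\R^2),D(A)]_{1/2}$ and $D(B^{1/2})=[L^p(\Omega,\R^2),D(B)]_{1/2}$ (cf.\ \cite{Haase,Kunstmann}). As $A=-(\partial_x+\beta_p)^2+b$ acts only in $x$ with $D(A)=W^{2,p}(\R,L^p(I,\R^2))$, we get $D(A^{1/2})=W^{1,p}(\R,L^p(I,\R^2))$; and since $D(B)=L^p(\R,D(\cT_{p,\theta}))$ with $D(\cT_{p,\theta})\subset W^{2,p}(I,\R^2)$, complex interpolation commuting with $L^p(\R,\cdot)$ plus interior regularity give $D(B^{1/2})\hookrightarrow L^p(\R,W^{1,p}(I,\R^2))$. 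Hence $v\in D(B^{1/2})$ yields $\partial_\theta v\in L^p(\Omega,\R^2)$, and $B^{1/2}v\in D(A^{1/2})$ yields $\partial_x(B^{1/2}v)\in L^p(\Omega,\R^2)$; because $\partial_x$ and $B^{1/2}$ act in separate variables they commute (cf.\ Lemma~\ref{lemm_kommutativ}(4)), so $\partial_x(B^{1/2}v)=B^{1/2}(\partial_x v)$, whence $\partial_x v\in D(B^{1/2})\hookrightarrow L^p(\R,W^{1,p}(I,\R^2))$ and thus $\partial_\theta\partial_x v\in L^p(\Omega,\R^2)$. Together with the first- and second-order bounds already in hand this gives $v\in W^{2,p}(\R\times I,\R^2)$ with $\partial_\theta v_x=v_\theta=0$ on $\partial\Omega$, i.e.\ $v\in D(T_p)$. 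The main obstacle is precisely this last translation step — setting up the mixed-derivative theorem correctly and identifying $D(A^{1/2})$, $D(B^{1/2})$ as (embedded into) first-order anisotropic Sobolev spaces, using that the two operators act in separate variables and have bounded imaginary powers. A more elementary alternative would be to localize in $\theta\in\bar I$, reduce the boundary conditions $\partial_\theta v_x=v_\theta=0$ to the whole line by even/odd reflection, and apply the $\R^2$ multiplier estimate $\|\partial_x\partial_\theta w\|_{L^p}\lesssim\|w\|_{L^p}+\|\partial_x^2 w\|_{L^p}+\|\partial_\theta^2 w\|_{L^p}$; there the only nuisance is the coupling present in the system $\cT_{p,\theta}$.
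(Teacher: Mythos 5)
Your proof is correct in substance but follows a genuinely different route from the paper. Both arguments reduce the lemma to the mixed--derivative estimate $\|\partial_x\partial_\theta v\|_p\lesssim \|v\|_p+\|\partial_x^2v\|_p+\|\partial_\theta^2v\|_p$ on $\Omega$; the paper obtains it by exactly the elementary device you mention only as an ``alternative'' at the very end: extend $v$ from the strip $\Omega=\R\times I$ to $\R^2$ by a (reflection-type) extension operator that is simultaneously bounded on the anisotropic spaces $W^{2,p}(\R,L^p)$ and $L^p(\R,W^{2,p})$, and then apply Mikhlin's multiplier theorem to the symbol $\xi_i\xi_j/|\xi|^2$ on $\R^2$. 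You instead invoke the operator-sum machinery: Kalton--Weis for closedness and invertibility of $A+B$ with domain $D(A)\cap D(B)$, then the mixed-derivative theorem $A^{1/2}B^{1/2}(A+B)^{-1}\in\sL(L^p(\Omega,\R^2))$, and finally the identifications $D(A^{1/2})=W^{1,p}(\R,L^p(I,\R^2))$ and $D(B^{1/2})\hookrightarrow L^p(\R,W^{1,p}(I,\R^2))$ via bounded imaginary powers and complex interpolation. Your route is heavier but more robust (it is the argument that survives in situations where no extension to the full space is available); the paper's route is shorter and avoids fractional powers altogether.

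Two points should be tightened. First, the step ``$\partial_x(B^{1/2}v)=B^{1/2}(\partial_xv)$, whence $\partial_xv\in D(B^{1/2})$'' is circular as written, since the right-hand side only makes sense once $\partial_xv\in D(B^{1/2})$ is known. The clean version uses commutativity of the two calculi to write $A^{1/2}B^{1/2}v=B^{1/2}A^{1/2}v$, so that $A^{1/2}v\in D(B^{1/2})$, and then $\partial_x v=\bigl[(\partial_x+\beta_p)A^{-1/2}\bigr]A^{1/2}v-\beta_pv$, where $(\partial_x+\beta_p)A^{-1/2}$ is a bounded Fourier multiplier in $x$ commuting with $B$ and hence leaving $D(B^{1/2})$ invariant. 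Second, your parenthetical claim $\sigma(T_{p,\theta})\subset(-\infty,-1]$ is false in general: by \eqref{spectt} the eigenvalue $-(\pi/\theta_0-1)^2$ tends to $0$ as $\theta_0\to\pi$. What you actually need, and what does hold for each fixed $\theta_0\in(0,\pi)$, is only that $0\notin\sigma(T_{p,\theta})$, since every $k\pi/\theta_0-1$ is strictly positive. Neither point affects the validity of the argument.
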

\begin{proof}
Considering the function $\xi \mapsto \frac{i \xi_i \cdot i
\xi_j}{|\xi|^2}|\xi|^2$ for $\xi \in \bR^2$ and applying Mihklin's
Multiplier Theorem \cite{Stein} it is not difficult to see that
$$ W^{2,p}(\bR^2, \bR^2) =L^p(\bR, W^{2,p}(\bR, \bR^2)) \cap W^{2,p}(\bR, L^p(\bR, \bR^2)) $$
with equivalent norms.
The validity of \eqref{domsumlxlt} is proved via an extension theorem,
i.e., via a bounded operator $E:W^{2,p}(\Omega, \bR^2) \rightarrow
W^{2,p}(\bR^2, \bR^2)$ with $Ef |_\Omega =f$ for all $f \in W^{2,p}
(\Omega, \bR^2)$. See \cite[Theorem 4.26]{adams} for the 
existence of $E$. 
\end{proof}


\subsection{Optimal elliptic regularity for problem (\ref{lapequ})}
\label{secoptkeil}

We next consider equivalence of the problems
\eqref{lapequ} and \eqref{prob_transformiert}. 
The Laplace operator on the wedge domain is defined as
	$$ B_p u := -\Delta u, \  u \in D(B_p)
	:=  \{ u \in K^2_{p,\gamma}(G, \mathbb{R}^2): \ \text{curl} \ u=0, \ \nu \cdot u=0 \ \text{on} \ \partial G \}. $$
Observe that the boundary conditions are defined in a local sense.
Indeed, each $u\in K^2_{p,\gamma}(G, \mathbb{R}^2)$ is locally away from
the vertex $(0,0)$ a $W^{2,p}$-function for which the traces are 
well-defined.

\begin{lemma} \label{lemma_result}
Let $1<p<\infty$. 
Let $\Theta_*^p,\widetilde{\Theta}_*^p,\Theta^*_p,\widetilde{\Theta}^*_p$ be defined as
in Subsection~\ref{first}. 
Then we have
\begin{equation}\label{transformsiso}
\widetilde{\Theta}_*^p \in \mathscr{L}_{is}\left(L^p(\Omega, \bR^2),\, 
L^p_\gamma(G, \bR^2)\right),\qquad 
\Theta_*^p \in \mathscr{L}_{is}\left(D(T_{p}),\, D(B_p)\right)
\end{equation}
where $\|\cdot \|_{D(B_p)} = \| \cdot\|_{K^2_{p,\gamma}(G, \mathbb{R}^2)}$ and  
$\|\cdot \|_{D(T_{p})} = \|\cdot \|_{W^{2,p}(\Omega, \mathbb{R}^2)}$.

In particular, $u \in D(B_p)$ is the unique solution of 
\eqref{lapequ} to the right-hand side $f \in L^p_\gamma(G, \R^2)$ 
if and only if $v=\Theta^*_pu \in D(T_{p})$ is the unique solution of 
\eqref{prob_transformiert} to the right-hand side $g= \widetilde{\Theta}^*_pf$.
\end{lemma}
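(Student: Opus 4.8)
\textbf{Proof plan for Lemma~\ref{lemma_result}.}
The statement is essentially a transport-of-structure result: the transform $\psi=\psi_P\circ\psi_E$ is a diffeomorphism from $\Omega$ onto $G$, so all the analytic content has already been done in Subsection~\ref{first} (the explicit computation of the transformed Laplacian and boundary conditions) and in Theorem~\ref{theorem_haupt} and Lemma~\ref{fullellreg}; what remains is to verify that the bookkeeping operators $\widetilde\Theta^*_p$ and $\Theta^*_p$ are isomorphisms onto the stated spaces. The plan is to proceed in three steps. First I would check $\widetilde\Theta^*_p\in\mathscr{L}_{is}(L^p(\Omega,\R^2),L^p_\gamma(G,\R^2))$: by definition $\widetilde\Theta^*_pf=e^{2x}M_{-\beta_p}\mathcal O^{-1}\Psi f$, the matrix $\mathcal O$ is orthogonal (hence an isometry on the fibre $\R^2$ pointwise), $\Psi$ is pure pull-back along the diffeomorphism $\psi$, and the change-of-variables formula together with the identity recorded just before \eqref{substitution}, namely
\[
	\int_{\R}|g(x,\theta)|^p\,dx=\int_0^\infty |r^{2-\beta_p}\mathcal O^{-1}f(\psi_P(r,\theta))|^p\,\frac{dr}{r},
\]
shows that with the choice \eqref{bedingung}, $p(2-\beta_p)=\gamma+2$, the weight $\rho^\gamma$ in the Kondrat'ev norm is exactly matched, so $\widetilde\Theta^*_p$ is an isometric isomorphism (the inverse being $\widetilde\Theta_*^p$, built analogously). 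Since $\Theta^*_p=e^{-2x}\widetilde\Theta^*_p$ differs only by the smooth, everywhere positive factor $e^{-2x}$, the map $\Theta^*_p$ is likewise a bijection between the appropriate spaces, but now intertwining the differential operators: by the computation in Subsection~\ref{first}, $\Theta^*_p(-\Delta u)=T_p(\Theta^*_pu)$ as soon as $u$ has enough regularity, so the two problems \eqref{lapequ} and \eqref{prob_transformiert} correspond to each other via $v=\Theta^*_pu$, $g=\widetilde\Theta^*_pf$.

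Second, I would identify the domains. One inclusion is routine: if $u\in D(B_p)=\{u\in K^2_{p,\gamma}(G,\R^2):\ \curl u=0,\ \nu\cdot u=0\ \text{on}\ \partial G\}$, then, because $\psi$ and $\psi^{-1}$ have derivatives that on every set $\{R_1\le\rho\le R_2\}$ are bounded with bounded inverse and the weights $\rho^{|\alpha|-2}$ are precisely absorbed by the powers $e^{(|\alpha|-2)x}$ coming from the chain rule in the Euler variable, one gets $\Theta^*_pu\in W^{2,p}(\Omega,\R^2)$ with $\|\Theta^*_pu\|_{W^{2,p}(\Omega)}\simeq\|u\|_{K^2_{p,\gamma}(G)}$; the boundary conditions transform as recorded at the end of Subsection~\ref{first}, i.e.\ $\curl u=0,\ \nu\cdot u=0$ becomes $\partial_\theta v_x=0,\ v_\theta=0$ on $\partial\Omega$, so $\Theta^*_pu\in D(T_p)$. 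The converse inclusion $\Theta_*^p(D(T_p))\subset D(B_p)$ is the same argument run with $\psi^{-1}$ in place of $\psi$; the only subtlety, which I would flag explicitly, is that the Kondrat'ev space $K^2_{p,\gamma}$ is defined so that exactly these weighted derivative norms are finite, precisely so that the pull-back of a global $W^{2,p}(\Omega)$ function lands in it — no boundedness of $G$ near the vertex is needed, since the weight does the job. Thus $\Theta^*_p:D(T_p)\to D(B_p)$ is a norm-equivalent bijection, which is the second displayed isomorphism in \eqref{transformsiso}. Using Lemma~\ref{fullellreg} one may equally write $D(T_p)=D(T_{p,\theta})\cap D(T_{p,x})$ here.

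Third, the ``in particular'' is then immediate: $u$ solves \eqref{lapequ} with data $f$ if and only if $u\in D(B_p)$ and $B_pu=f$, which by the intertwining $\Theta^*_pB_p=T_p\Theta^*_p$ on $D(B_p)$ and the isomorphism properties just established is equivalent to $v:=\Theta^*_pu\in D(T_p)$ and $T_pv=\widetilde\Theta^*_pf=g$, i.e.\ $v$ solves \eqref{prob_transformiert} with data $g$; uniqueness transfers because $\Theta^*_p$ is a bijection. I expect the only mildly delicate point to be the verification that the chain-rule powers of $e^x$ generated by passing through polar plus Euler coordinates match the weights $\rho^{|\alpha|-2}$ term by term for $|\alpha|=0,1,2$ (including the cross terms mixing $\partial_x$ and $\partial_\theta$), together with the care needed to see that differentiating the orthogonal matrix $\mathcal O(\theta)$ and the factor $M_{\beta_p}=e^{\beta_px}$ produces only lower-order contributions already controlled by the $K^2_{p,\gamma}$-norm; everything else is formal transport of structure along a diffeomorphism.
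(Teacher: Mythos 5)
Your proposal is correct and follows essentially the same route as the paper, whose own proof merely records that \eqref{transformsiso} is a straightforward verification from the transformations of Subsection~\ref{first}; your three steps (isometry of $\widetilde{\Theta}^*_p$ from the change-of-variables identity together with the choice \eqref{bedingung}, matching of the weights $\rho^{|\alpha|-2}$ against the chain-rule powers of $e^{x}$ to identify the domains, and transport of the boundary conditions) are exactly the details the paper leaves implicit. One small bookkeeping slip: the intertwining should read $\widetilde{\Theta}^*_p(\Delta u)=T_p\,\Theta^*_p u$, equivalently $\Theta^*_p(\Delta u)=e^{-2x}\,T_p\,\Theta^*_p u$, rather than $\Theta^*_p(-\Delta u)=T_p(\Theta^*_p u)$ — the factor $e^{2x}$ distinguishing $\widetilde{\Theta}^*_p$ from $\Theta^*_p$ must appear on the left to absorb the $e^{-2x}$ produced by the transformed Laplacian, which is in fact what your own final conclusion $T_pv=\widetilde{\Theta}^*_pf=g$ already uses.
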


\begin{proof}
By utilizing the transformations given in Subsection \ref{first} and by the 
definition of $\widetilde{\Theta}_*^p$ and $\Theta_*^p$, it is straight forward 
to verify (\ref{transformsiso}). 
Hence problem \eqref{lapequ} and problem \eqref{prob_transformiert} 
are equivalent. 
\end{proof}

Since $-\beta_p^2\not\in \sigma(T_{p,\theta})$ is precisely
condition (\ref{spec_cond}), Theorem~\ref{theorem_haupt}, 
Lemma~\ref{fullellreg}, and Lemma~\ref{lemma_result} now imply 
our second main result Theorem~\ref{main1}.

\begin{remark}\label{remconsnotw2}
(a)\ Theorem~\ref{main1} in particular 
implies that $(B_p^{-1})_{1<p<\infty}$ cannot be a consistent family
on the scale $\left(L^p(\Omega,\R^2)\right)_{1<p<\infty}$.
Otherwise it would be possible to recover the excluded $p$ subject to
condition (\ref{spec_cond}) by an interpolation argument. 
By the equivalence in Theorem~\ref{main1} this, 
however, is not possible.\\[1mm]
(b)\ 
Note that for $\gamma=0$ we have
\[
	\int_G\left|u(x_1,x_2)/|(x_1,x_2)|^{2}\right|^p\,dx_1 dx_2
	=\int_0^{\theta_0}\int_\R \left|e^{-(2-2/p)x}
	u(\psi(x,\theta))\right|^p\,dxd\theta.
\]
Thus, employing twice Hardy's inequality on the $x$ integral, the terms 
$\rho^{|\alpha|-2}\partial^\alpha u$ for $|\alpha|\le 1$ 
can be estimated by the second order terms. 
This, however, does only work provided $2-|\alpha|-2/p\neq 0$
which means at the end that $p\neq 2$,
since otherwise Hardy's inequality is not applicable.
As a consequence, Theorem~\ref{main1} implies that 
\[
	(\partial_j\partial_k u)_{1\le j,k\le 2}\not\subset L^{p}(G,\R^8),
\]
if condition (\ref{spec_cond}) is not satisfied and $p\neq 2$.
In the case $p=2$ second order derivatives might belong
to $L^{2}(G,\R^2)$, but then at least one of the terms
$\rho^{|\alpha|-2}\partial^\alpha u$, $|\alpha|\le 1$, cannot
be in $L^{2}(G,\R^2)$.
\end{remark}

\subsection{Consistency of $(B_p^{-1})_{1<p<\infty}$ on a subscale}
\label{secconsis}

Observe that condition \eqref{spec_cond} is always fulfilled if every
eigenvalue $\lambda_i$ of $T_{p,\theta}$ 
satisfies
	\begin{equation}\label{eigenvalue}
		 \lambda_i < - \left(2- \frac{2+\gamma}{p} \right)^2.
	\end{equation}
As our main interest 
concerns the Stokes equations in $L^p_\sigma(G)$,
from now on we restrict ourselves to the case $\gamma=0$, i.e.,
to the case of Kondrat'ev weight $\rho^\gamma\equiv 1$. Then we have 
\[
	-\beta_p^2=-\left(2- \frac{2}{p} \right)^2\ge -4
	\quad (1<p<\infty).
\]
From (\ref{spectt}) it is easily seen that 
\[
	\lambda_i<-4 \quad (i\ge 3).
\]
Thus, relation \eqref{eigenvalue} remains true for all 
$\lambda_i \in \sigma(T_{p,\theta})$ with $i\ge 3$. 

As we will see later (Proposition~\ref{closedrangeq}), 
excluding the eigenfunctions $e_0$, $e_1$, $e_2$
to the eigenvalues $\lambda_0$, $\lambda_1$, $\lambda_2$
of the transformed operator $T_{p,\theta} $, 
will play no significant role for the Stokes equations.
Roughly speaking, this is due to the fact that
their linear hull in $L^p(\Omega,\R^2)$ 
does not contain divergence free vector fields.
Hence, from now on we consider 
\[
	L^p(\R,E^p_3)=\mathbb{P}_{3}\left(L^p(\Omega,
	\mathbb{R}^2)\right)
\]
as the base space for $T_p:D_3(T_p)\to L^p(\R,E^p_3)$ with the
projector $\mathbb{P}_{3}$  defined in $\eqref{lemm_projektion}$ and 
domain 
\[
	D_3(T_p):=D(T_p)\cap L^p(\R,E^p_3)
	=D_3(T_{p,\theta})\cap D_3(T_{p,x}),
\]
with $D_3(T_{p,\theta})$ and $D_3(T_{p,x})$ as given in (\ref{domstm}).
As an immediate consequence of Theorem~\ref{theorem_haupt}
(and its proof for $m=3$, in particular (\ref{invcem})) we obtain
\begin{corollary}\label{tpinvss}
	We have $T_p\in
			\mathscr{L}_{is}\left(D_3(T_{p}),\,
			L^p(\R,E^p_3)\right)$ for all $1<p<\infty$.	
\end{corollary}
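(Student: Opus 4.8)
The plan is to obtain Corollary~\ref{tpinvss} as a direct specialization of Theorem~\ref{theorem_haupt} together with the restriction argument already carried out inside its proof. The key observation is that $\sigma(T_{p,\theta})=\{-1\}\cup\{-(k\pi/\theta_0\pm1)^2:k\in\bN\}$ by \eqref{spectt}, and that on the base space $L^p(\R,E^p_3)$ we have removed precisely the eigenfunctions $e_0,e_1,e_2$ belonging to the three largest eigenvalues $\lambda_0=-1$, $\lambda_1$, $\lambda_2$. Hence the spectrum of the part of $T_{p,\theta}$ in $L^p(\R,E^p_3)$ is $\{\lambda_i:i\ge3\}$, and by the computation in Subsection~\ref{secconsis} (namely $\lambda_i<-4\le-\beta_p^2$ for $i\ge3$, for every $1<p<\infty$ and $\gamma=0$) we get $-\beta_p^2\notin\sigma(T_{p,\theta}|_{L^p(\R,E^p_3)})$ for all $p\in(1,\infty)$, with room to spare.

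First I would recall from the proof of Theorem~\ref{theorem_haupt}, applied with $m=3$, the chain of facts established there: choosing $b$ with $\beta_p^2<b<-\lambda_3$, one has $\sigma(-T_{p,\theta})\subset(b,\infty)$ on $L^p(\R,E^p_3)$, so $-T_{p,\theta}-b-\eps$ is sectorial with a bounded $\cH^\infty$-calculus of angle $0$ on $L^p(\R,E^p_3)$ for some $\eps>0$ (Lemma~\ref{lemm_kommutativ}(2) plus the perturbation result \cite[Corollary~5.5.5]{Haase}). Combining this with $-T_{p,x}+b\in\cH^\infty(L^p(\R,E^p_3))$ of angle $<\pi/2$, the angle condition $\phi^\infty_{-T_{p,\theta}-b-\eps}+\phi^\infty_{-T_{p,x}+b}<\pi$ is met, and the commutation Lemma~\ref{lemm_kommutativ}(4) lets us invoke the Kalton--Weis theorem \cite[Corollary~5.4]{Kalton-Weis:Operator-Sums} (equivalently \cite[Proposition~3.5]{nauasaal}), yielding $-T_{p,\theta}-T_{p,x}-\eps\in\cH^\infty(L^p(\R,E^p_3))$, in particular $0\in\rho(-T_{p,\theta}-T_{p,x})$ on $L^p(\R,E^p_3)$. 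That is exactly \eqref{invcem} with $m=3$, which is the statement $T_p\in\sLis(D_3(T_{p,x})\cap D_3(T_{p,\theta}),L^p(\R,E^p_3))$.

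It then remains only to identify the domain: by definition $D_3(T_p)=D(T_p)\cap L^p(\R,E^p_3)$, and Lemma~\ref{fullellreg} gives $D(T_p)=D(T_{p,\theta})\cap D(T_{p,x})$, so intersecting with $L^p(\R,E^p_3)$ and using \eqref{domstm} yields $D_3(T_p)=D_3(T_{p,\theta})\cap D_3(T_{p,x})$, matching the target space of the isomorphism above. This completes the argument. I do not expect a genuine obstacle here: everything needed has been proved. The only points requiring a little care are (i) confirming that the eigenvalue bound $\lambda_i<-4$ holds for \emph{all} $i\ge3$ and not just $i=3$ — which is immediate since $(\lambda_i)$ is strictly decreasing by construction ($\lambda_1>\lambda_2>\cdots$) — and (ii) making sure the choice of $b$ and $\eps$ can be made uniformly valid, i.e.\ that $\beta_p^2\le4<-\lambda_3$, which again follows from $\beta_p=2-2/p\in(0,2)$ hence $\beta_p^2<4$, strictly. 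So the corollary is genuinely "immediate" in the sense claimed, and the write-up is essentially a pointer to Theorem~\ref{theorem_haupt} with $m=3$ and to Lemma~\ref{fullellreg}.
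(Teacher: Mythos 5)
Your proposal is correct and follows exactly the route the paper intends: the paper derives Corollary~\ref{tpinvss} precisely as the $m=3$ instance of the first part of the proof of Theorem~\ref{theorem_haupt} (i.e.\ (\ref{invcem})), made unconditional in $p$ by the observation from Subsection~\ref{secconsis} that $\lambda_i<-4\le-\beta_p^2$ for $i\ge 3$, together with Lemma~\ref{fullellreg} and (\ref{domstm}) to identify $D_3(T_p)=D_3(T_{p,\theta})\cap D_3(T_{p,x})$. Your two "points requiring care" are exactly the right ones and are handled correctly.
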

By Lemma~\ref{lemma_result} $\widetilde\Theta_*^p$ and $\Theta_*^p$ are
isomorphisms with inverse $\widetilde\Theta^*_p$ and $\Theta^*_p$,
respectively. This implies that 
\begin{equation}\label{defprojq}
\begin{split}
	\widetilde{\mathbb Q}_{p}&
	:=\widetilde\Theta_*^p\mathbb P_{3}\widetilde\Theta^*_p\quad\text{and}\\
	\mathbb Q_{p}&:=\Theta_*^p\mathbb P_{3}\Theta^*_p
\end{split}
\end{equation}
are projectors on $L^p(G,\R^2)$ and $D(B_p)$, respectively.
We set 
\[
	\mL^p:=\widetilde\mQ_p\left(L^p(G,\R^2)\right)
	=\widetilde\Theta_*^pL^p(\R,E^p_3)
\]
and define the restricted operator
\[
	\mB_p:=B_p|_{D(\mB_p)}
	\quad \text{with}\quad
	D(\mB_p):=\mQ_p\left(D(B_p)\right)
	=\Theta_*^pD_3(T_p).
\]

Notice that, unless its meaning is given otherwise, 
in what follows we understand the 
multiplication operator $M_\alpha v:=e^{\alpha x} v$ for $\alpha\in\R$
as an operator $M_\alpha: F\to M_\alpha(F)$ for a function space $F$.
It is clear that $M_\alpha$ is injective for all 
appearing function spaces $F$. 
Equipping $M_\alpha(F)$ with its canonical norm, we even have
$M_\alpha\in \sLis\left(F,\,M_\alpha(F)\right)$ and
$M_\alpha^{-1}=M_{-\alpha}$. Furthermore, if $T\in \sL(F)$ commutes with 
$M_\alpha$, then we also have $T\in \sL(M_\alpha(F))$.

By construction it follows
\begin{proposition}\label{proppropq}
Let $1<p<\infty$. Then we have
\begin{enumerate}
\item The scale $(\widetilde\mQ_p)_{1<p<\infty}$ is consistent
on $(L^p(G,\R^2))_{1<p<\infty}$ and the scale 
$(\mQ_p)_{1<p<\infty}$ on $(D(B_p))_{1<p<\infty}$;
\item $\widetilde\mQ_p v=\mQ_p v$ for $v\in D(B_p)\cap L^p(G,\R^2)$;
\item $B_p\mQ_p=\widetilde\mQ_pB_p$;
\item $\mB_p\in \sLis\left(D(\mB_p),\,\mL^p\right)$.
\end{enumerate}
In particular, for every $f\in\mL^p$ there is a unique solution 
$u \in D(\mB_p)$ of \eqref{lapequ}.
\end{proposition}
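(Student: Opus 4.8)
The plan is to obtain all four items, together with the concluding existence statement, by unwinding the definitions in \eqref{defprojq} and combining the isomorphism/conjugation properties of the transforms from Lemma~\ref{lemma_result} with the commutation properties of Lemma~\ref{lemm_kommutativ} and the invertibility from Corollary~\ref{tpinvss}. Two elementary facts carry the argument. First, since (for $\gamma=0$) $\widetilde\Theta^*_p=M_2\Theta^*_p$ and $\widetilde\Theta_*^p=\Theta_*^p M_{-2}$, whereas the multiplication operator $M_\alpha$ has a symbol depending on $x$ alone while $\mathbb P_3$ acts in the $\theta$-variable only, we have $M_\alpha\mathbb P_3=\mathbb P_3 M_\alpha$, whence
\[
	\widetilde\mQ_p=\Theta_*^p M_{-2}\,\mathbb P_3\,M_2\,\Theta^*_p=\Theta_*^p\,\mathbb P_3\,\Theta^*_p=\mQ_p ;
\]
cancelling in the same way the $p$-dependent factor $M_{-\beta_p}$ in $\Theta^*_p$ against $M_{\beta_p}$ in $\Theta_*^p$ gives $\mQ_p=\Psi^{-1}\cO\,\mathbb P_3\,\cO^{-1}\Psi$, whose defining expression no longer involves $p$ (recall that $\Psi$ and $\cO$ are fixed and $(\mathbb P_m)_{1<p<\infty}$ is consistent). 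Secondly, Lemma~\ref{lemma_result} says precisely that $\Theta^*_p$ conjugates $B_p$ to $T_p$, i.e.\ $B_p=\widetilde\Theta_*^p\,T_p\,\Theta^*_p$ on $D(B_p)$, equivalently $B_p\Theta_*^p=\widetilde\Theta_*^p\,T_p$ on $D(T_p)$, while Lemma~\ref{lemm_kommutativ}(1) gives $\mathbb P_3 v\in D(T_p)$ with $\mathbb P_3 T_p v=T_p\mathbb P_3 v$ for $v\in D(T_p)$.

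With these in hand the four items are short diagram chases. For (1): the operator $\Psi^{-1}\cO\,\mathbb P_3\,\cO^{-1}\Psi$ being $p$-free, $\widetilde\mQ_p u=\widetilde\mQ_q u$ for $u\in L^p(G,\R^2)\cap L^q(G,\R^2)$ and $\mQ_p u=\mQ_q u$ for $u\in D(B_p)\cap D(B_q)$. Item~(2) is immediate from $\widetilde\mQ_p=\mQ_p$. For (3), on $D(B_p)$, using in turn $B_p\Theta_*^p=\widetilde\Theta_*^p T_p$, the commutation $\mathbb P_3 T_p=T_p\mathbb P_3$, and $T_p\Theta^*_p=\widetilde\Theta^*_p B_p$,
\[
	B_p\mQ_p=B_p\Theta_*^p\,\mathbb P_3\,\Theta^*_p=\widetilde\Theta_*^p\,T_p\,\mathbb P_3\,\Theta^*_p=\widetilde\Theta_*^p\,\mathbb P_3\,T_p\,\Theta^*_p=\widetilde\Theta_*^p\,\mathbb P_3\,\widetilde\Theta^*_p\,B_p=\widetilde\mQ_p\,B_p .
\]
For (4), restricting the isomorphisms of Lemma~\ref{lemma_result} to the complemented subspaces gives $\Theta_*^p\in\sLis\bigl(D_3(T_p),D(\mB_p)\bigr)$ and $\widetilde\Theta_*^p\in\sLis\bigl(L^p(\R,E^p_3),\mL^p\bigr)$, while $T_p\in\sLis\bigl(D_3(T_p),L^p(\R,E^p_3)\bigr)$ by Corollary~\ref{tpinvss}; since $\mB_p=B_p|_{D(\mB_p)}$ and $B_p\Theta_*^p=\widetilde\Theta_*^p T_p$, we obtain $\mB_p=\widetilde\Theta_*^p\,T_p\,(\Theta_*^p)^{-1}$ on $D(\mB_p)$, a composition of three isomorphisms, so $\mB_p\in\sLis\bigl(D(\mB_p),\mL^p\bigr)$. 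The last claim then follows: $u:=\mB_p^{-1}f\in D(\mB_p)\subset D(B_p)$ satisfies $-\Delta u=B_p u=\mB_p u=f$ together with the boundary conditions built into $D(B_p)$, and is the unique such element of $D(\mB_p)$ by injectivity of $\mB_p$.

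I do not expect a genuine obstacle here; as the authors indicate, the statement holds ``by construction''. The two points deserving a line of care are (i) the consistency in $p$ in item~(1), which looks problematic because the transforms $\Theta^*_p,\widetilde\Theta^*_p$ are themselves not consistent, but is rescued precisely by the commutation $M_\alpha\mathbb P_3=\mathbb P_3 M_\alpha$ of a multiplication in $x$ with the projection in $\theta$; and (ii) the bookkeeping of domains --- that $\mathbb P_3$ leaves $D(T_p)$ invariant, that $\Theta^*_p$ maps $D(B_p)$ isomorphically onto $D(T_p)$ and restricts to $D_3(T_p)\to D(\mB_p)$, and that $\mQ_p$ leaves $D(B_p)$ invariant --- all of which is furnished by Lemma~\ref{lemm_kommutativ}, Corollary~\ref{lemprojdoms} and Lemma~\ref{lemma_result}.
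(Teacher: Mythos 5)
Your proof is correct and follows essentially the same route as the paper: the commutation of the $x$-multipliers $M_\alpha$ with the $\theta$-projection $\mathbb P_3$ yielding the $p$-free representation $\mQ_p=\Psi^{-1}\cO\,\mathbb P_3\,\cO^{-1}\Psi$, the conjugation $B_p=\widetilde\Theta_*^p T_p\Theta^*_p$ from Lemma~\ref{lemma_result} combined with Lemma~\ref{lemm_kommutativ} for item (3), and the restriction of the isomorphisms to the complemented subspaces together with Corollary~\ref{tpinvss} for item (4). The only cosmetic difference is that the paper justifies $M_\alpha\mathbb P_3=\mathbb P_3 M_\alpha$ on $D(T_p)$ by a density argument rather than asserting it outright, and keeps $\widetilde\mQ_p$ and $\mQ_p$ notationally separate (they have different underlying spaces) where you write them as equal.
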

\begin{proof}
(1)\ Obviously we have
\begin{equation}\label{commmap}
	M_\alpha\mP_3v=
	\mP_3M_\alpha v\quad \left(v\in C^\infty_c(\R,D(\cT_{p,\theta})),
	\ \alpha\in\R\right)
\end{equation}
with $\cT_{p,\theta}$ as defined in the beginning of 
Subsection~\ref{sec_prob_transformiert}.
From Lemma~\ref{fullellreg} and Lemma~\ref{denselem} 
we infer that
$C^\infty_c(\R,D(\cT_{p,\theta}))$ is dense in $D(T_p)$.
Thus equality (\ref{commmap}) extends to $v\in D(T_p)$.
By the definition of $\Theta^p_*$ and $\Theta^*_p$ 
(see (\ref{pullback})) this implies
\begin{equation}\label{mqconsequ}
	\mQ_p u
	= \Psi^{-1}\mathcal O M_{\beta_p}\mP_3 M_{-\beta_p}
	\mathcal{O}^{-1}\Psi u
	= \Psi^{-1}\mathcal O \mP_3 
	\mathcal{O}^{-1}\Psi u
	\quad (u\in D(B_p)).
\end{equation}
By the fact that all operators on the right-hand 
side do not depend on $p$ we obtain consistency of
$(\mQ_p)_{1<p<\infty}$. The consistency of 
$(\widetilde\mQ_p)_{1<p<\infty}$ is completely analogous.
\\[2mm]
(2)\ For $u\in D(B_p)\cap L^p(\Omega,\R^2)$ we deduce similarly 
as in (\ref{mqconsequ}) that
\begin{align*}
	\mQ_p u
	&= \Psi^{-1}\mathcal O M_{\beta_p}\mP_3 M_{-\beta_p}
	\mathcal{O}^{-1}\Psi u
	= \Psi^{-1}\mathcal O \mP_3 
	\mathcal{O}^{-1}\Psi u\\
	&= \Psi^{-1}\mathcal O M_{\beta_p+2}\mP_3 M_{-\beta_p-2}
	\mathcal{O}^{-1}\Psi u
	=\widetilde\mQ_p u.
\end{align*}
(3)\ Thanks to Lemma~\ref{lemm_kommutativ} we have
\[
	B_p\mQ_p
	=\widetilde\Theta_*^p\, T_{p}\,\Theta^*_p\,\Theta_*^p\,
	\mathbb P_{3}\,\Theta^*_p
	=\widetilde\Theta_*^p\, \mathbb P_{3}\, T_p\,\Theta^*_p
	=\widetilde\mQ_p B_p.
\]
(4)\ This is a consequence of representation
\[
	\mB_p=\widetilde\Theta_*^p\, T_{p}\,\Theta^*_p
	\quad\text{on}\quad D(\mB_p),
\]
Lemma~\ref{lemma_result}, Corollary~\ref{tpinvss}, 
and the definition of $\mL^p$, $D(\mB_p)$. 
\end{proof}
As for the projector $\mP_3$ before, due to the consistency
we write from now on $\mQ$ and $\widetilde\mQ$, i.e., we omit the 
subscript $p$.

Next, we show consistency of the family $(\mB_p^{-1})_{1<p<\infty}$
on the subscale $(\mL^p)_{1<p<\infty}$. 
Observe that the operator $\mB_p^{-1}$ is represented as 
\begin{equation}\label{repbm1}
	\mB_p^{-1}=\bigl.\Theta_*^pT_p^{-1}\widetilde{\Theta}^*_p\,\bigr|_{\mL^p}.
\end{equation}
So, for consistency we need to prove that the right-hand side
above does not depend on $p$. Note, however, that the single
components $\Theta_*^p$, $T_p^{-1}$, $\widetilde{\Theta}^*_p$ do depend
on $p$. Merely their combination can be consistent. 
For this purpose we first show
\begin{lemma}\label{commuteexp}
Let $1<p\le q<\infty$ and $\beta_p=2-2/p$. 
For $f\in C^\infty_c(\R,E_3^q)$ we have
\[
	T_p^{-1}e^{(\beta_q-\beta_p)x}f=
	e^{(\beta_q-\beta_p)x}T_q^{-1}f.
\]
\end{lemma}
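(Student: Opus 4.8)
The plan is to exploit the explicit form of the operators, in particular the fact that the multiplication operator $M_\alpha$ with $\alpha = \beta_q - \beta_p \ge 0$ interacts with $T_{p,x}$ in a controllable way while commuting with $T_{p,\theta}$. First I would observe that $T_q = T_{q,x} + T_{q,\theta}$ acting on functions in the $\theta$-variable is independent of $p$ or $q$ in its $\theta$-part, i.e. $\mathcal T_{q,\theta} = \mathcal T_{p,\theta}$ as differential expressions; the only $p$-dependence sits in the polynomial $r_p(\partial_x) = \partial_x^2 + 2\beta_p\partial_x + \beta_p^2 = (\partial_x + \beta_p)^2$. The key algebraic identity is the conjugation rule
\[
	e^{-\alpha x}(\partial_x + \beta_p)e^{\alpha x} = \partial_x + \beta_p + \alpha,
\]
so that $e^{-\alpha x}r_p(\partial_x)e^{\alpha x} = (\partial_x + \beta_p + \alpha)^2 = r_{p'}(\partial_x)$ precisely when $\beta_{p'} = \beta_p + \alpha$. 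With $\alpha = \beta_q - \beta_p$ this gives $e^{-\alpha x}T_{p,x}e^{\alpha x} = T_{q,x}$ on the appropriate domains, while $e^{-\alpha x}T_{p,\theta}e^{\alpha x} = T_{q,\theta}$ trivially since $M_\alpha$ acts only in $x$. Hence $e^{-\alpha x}T_p e^{\alpha x} = T_q$ as operators, and therefore, applying this to $f \in C^\infty_c(\R, E_3^q) \subset C^\infty_c(\R, E_3^p)$,
\[
	T_p(e^{\alpha x}T_q^{-1}f) = e^{\alpha x}(e^{-\alpha x}T_p e^{\alpha x})T_q^{-1}f = e^{\alpha x}T_q T_q^{-1}f = e^{\alpha x}f,
\]
which, upon applying $T_p^{-1}$ (legitimate by Corollary~\ref{tpinvss}, since both sides lie in the relevant subspaces), yields exactly $T_p^{-1}e^{\alpha x}f = e^{\alpha x}T_q^{-1}f$.

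The main obstacle is making the formal conjugation rigorous at the level of unbounded operators, i.e. verifying that the domains match up and that no issue arises from $M_\alpha$ not being bounded on $L^p(\R, \cdot)$. Here is where the restriction $f\in C^\infty_c(\R,E_3^q)$ and $1 < p \le q < \infty$ (so $\alpha = \beta_q - \beta_p \ge 0$) is used: $T_q^{-1}f$ is a genuine $W^{2,p}$-function on $\Omega$ (in fact in $\bigcap_r W^{2,r}$ by elliptic regularity together with Corollary~\ref{tpinvss} applied for each exponent), but it need not be compactly supported, so one must check that $e^{\alpha x}T_q^{-1}f \in D(T_p) = D(T_{p,x}) \cap D(T_{p,\theta})$. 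Since $T_q^{-1}f$ decays exponentially as $x \to +\infty$ (this follows from the spectral picture: on $L^p(\R,E^p_3)$ the operator $-T_q$ has spectrum bounded away from zero, indeed in a sector in the right half plane avoiding the origin, so its inverse applied to a Schwartz-in-$x$ function produces exponential decay — more concretely one can see this from the Fourier-multiplier representation of $T_q^{-1}$ and contour shifting), multiplication by $e^{\alpha x}$ with $\alpha \ge 0$ does not destroy integrability at $+\infty$, and at $-\infty$ it only improves decay. I would spell this decay estimate out as the technical heart of the argument, perhaps as a short separate claim, and then the containment $e^{\alpha x}T_q^{-1}f \in W^{2,p}(\Omega,\R^2)$ with the correct boundary conditions follows because the boundary conditions are preserved under multiplication by a function of $x$ alone and because $\partial_x$ and $\partial_\theta$ derivatives of $e^{\alpha x}T_q^{-1}f$ are again exponentially decaying products.

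Alternatively — and this may be cleaner to write — I would avoid domain bookkeeping by arguing on the resolvent/Fourier side directly: both $T_p^{-1}e^{\alpha x}$ and $e^{\alpha x}T_q^{-1}$, when tested against $f \in C^\infty_c(\R, E^q_3)$, can be computed by partial Fourier transform in $x$, where $T_{p,x}$ becomes multiplication by $r_p(i\xi) = -(\,\xi - i\beta_p)^2$ and the operator $e^{\alpha x}$ becomes the translation $\xi \mapsto \xi + i\alpha$ in the complex $\xi$-plane (a shift of contour, justified since the Fourier transform of a compactly supported smooth function extends to an entire function of exponential type). On that side the identity $r_p(i(\xi + i\alpha)) = r_q(i\xi)$ is an elementary polynomial identity once $\beta_q = \beta_p + \alpha$, and $\mathcal T_{q,\theta} = \mathcal T_{p,\theta}$ gives the rest; the resolvent $(T_{p,\theta} + r_p(i\xi))^{-1}$ is holomorphic in $\xi$ in a strip of width at least $\alpha$ around $\R$ because $-\beta_p^2 \notin \sigma(T_{p,\theta})$ translates (again using $1 < p \le q$, so the relevant $\beta$-values lie between $\beta_p$ and $\beta_q$ and all of them avoid the bad set on $E_3$ by Corollary~\ref{tpinvss}) into $r_p(i\xi) \notin \sigma(-T_{p,\theta})$ throughout that strip, legitimizing the contour shift. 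Either way the conclusion is the stated identity on the dense class $C^\infty_c(\R, E^q_3)$, which is all that is needed.
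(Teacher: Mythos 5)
Your core identity is exactly the paper's: the conjugation relation $e^{-\alpha x}(\partial_x+\beta_p)^2e^{\alpha x}=(\partial_x+\beta_q)^2$ with $\alpha=\beta_q-\beta_p$, hence $M_\alpha T_q M_{-\alpha}=T_p$, combined with the fact that the $\theta$-part is $p$-independent. Where you diverge is in how the domain problem is resolved, and this is worth comparing. You attack it head-on: to write $T_p(e^{\alpha x}T_q^{-1}f)=e^{\alpha x}f$ and then invert, you must show $e^{\alpha x}T_q^{-1}f\in D_3(T_p)$, which you propose to get from a pointwise exponential decay estimate for $T_q^{-1}f$. Be aware that "spectrum bounded away from zero" alone does not yield such decay; what makes it true is that the operator-valued symbol $\zeta\mapsto(\mathcal T_{q,\theta}+(i\zeta+\beta_q)^2)^{-1}$ restricted to $E^q_3$ is holomorphic and bounded on the full strip $0\le\operatorname{Im}\zeta\le\alpha$ (here one uses $\sqrt{|\lambda_k|}>2>\beta_q\ge\beta_q-s\ge\beta_p\ge0$ for all modes $k\ge3$, i.e.\ precisely the restriction to $E_3$), so the mode-by-mode decay rates are uniformly positive and the Paley--Wiener contour shift goes through; your "alternative" Fourier argument is therefore not really an alternative but the rigorous form of the decay claim, and it is sound. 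The paper avoids all of this: it defines the auxiliary operator $\widetilde T_q^{-1}:=M_{-\alpha}T_p^{-1}M_\alpha$ on the weighted space $M_{-\alpha}(L^p(\R,E^p_3))$, notes that the conjugation identity (extended from $C^\infty_c(\R,D_3(\mathcal T_{\theta,q}))$ to all of $D_3(T_p)$ by a purely density-based argument, Lemma~\ref{denselem}) makes $\widetilde T_q^{-1}$ a two-sided inverse of $T_q$ there, and then identifies $\widetilde T_q^{-1}$ with $T_q^{-1}$ on the dense class $C^\infty_c(\R,E^q_3)$ by checking $T_q\widetilde T_q^{-1}f=f$ and $\widetilde T_q^{-1}T_qf=f$ and approximating. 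That route buys you freedom from any decay or multiplier estimate -- it is pure algebra plus density -- at the cost of introducing the weighted image spaces $M_{-\alpha}(F)$; your route buys a more concrete, quantitative picture of $T_q^{-1}f$ but requires you to actually carry out the strip-holomorphy/contour-shift argument, which is the step you have only sketched and which you should write out if you take this path.
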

\begin{proof}
First note that $f\in C^\infty_c(\R,E_3^q)$ and $p\le q$ yield
\begin{equation}\label{firstemb}
	e^{(\beta_q-\beta_p)x}f\in C^\infty_c(\R,E_3^q)
	\subset L^p(\R,E_3^p).
\end{equation}
Hence the application of $T_p^{-1}$ to this quantity is defined.
Also recall that 
\[
	T_pv=T_{p,\theta}v+T_{p,x}v
	=\mathcal T_{p,\theta}v+\mathcal T_{p,x}v
	= \mathcal T_{p,\theta}v+(\partial_x+\beta_p)^2v.
\]
We observe that 
\[
	(\partial_x+\beta_q)^2e^{-(\beta_q-\beta_p)x}
	=e^{-(\beta_q-\beta_p)x}(\partial_x+\beta_p)^2
\]
implies that 
\begin{equation}\label{preext}
	e^{(\beta_q-\beta_p)x}T_qe^{-(\beta_q-\beta_p)x}v=T_pv
	\quad \left(v\in C^\infty_c(\R,D_3(\mathcal T_{\theta,q}))\right),
\end{equation}
as an equality in $C^\infty_c(\R,E_3^p)$. 
Here we set $D_3(\mathcal T_{\theta,q})=D(\mathcal T_{\theta,q})\cap E_3^q$
and notice that the assertions of Corollary~\ref{lemprojdoms}
also hold for $\mathcal T_{\theta,q}$.

For $v\in C^\infty_c(\R,D_3(\mathcal T_{\theta,p}))
\hook C^\infty_c(\R,E^q_3)$ (Sobolev embedding) we set 
\begin{equation}\label{approxvci}
	v_k:=k(k+T_{q,\theta})^{-1}v 
	\in C^\infty_c(\R,D_3(\mathcal T_{\theta,q})),\quad k\in\N.
\end{equation}
By the sectoriality of $T_{q,\theta}$ we obtain 
$v_k\to v$ in $D_3(T_p)$.
Hence equality (\ref{preext}) extends to 
$v\in C^\infty_c(\R,D_3(\mathcal T_{\theta,p}))$.
Setting $X=D_3(\mathcal T_{\theta,p})$, $Y=E_3^p$, $k=0$,
and $\ell=2$ in Lemma~\ref{denselem}, we see that (\ref{preext})
extends to all $v\in D_3(T_p)$.

As before, for $\alpha\in\R$ we set
$M_\alpha v=e^{\alpha x} v$. 
For $\alpha=\beta_q-\beta_p$ 
relation (\ref{preext}) then yields
\[
	T_q=M_{-\alpha}T_pM_\alpha
	\in\sLis\bigl(M_{-\alpha}(D_3(T_p)),
	\,M_{-\alpha}(L^p(\R,E_3^p))\bigr)
\]
with inverse
\[
	\widetilde T_q^{-1}=M_{-\alpha}T_p^{-1}M_\alpha.
\]
Thanks to (\ref{firstemb}) we see that 
\[
	f=M_{-\alpha}\left(M_\alpha f\right)
	\in M_{-\alpha}\left(L^p(\R,E_3^p)\right)
\]
for $f\in C^\infty_c(\R,E_3^q)$. Due to this fact
it remains to show that $\widetilde T_q^{-1}$ is consistent with
$T_q^{-1}$ on $C^\infty_c(\R,E_3^q)$.

For $f\in C^\infty_c(\R,D_3(\mathcal T_{\theta,q}))$ we have 
$M_\alpha f\in C^\infty_c(\R,E_3^p)$ and hence 
$T_p^{-1}M_\alpha f\in D_3(T_p)$. Since (\ref{preext}) 
holds for all $v\in D_3(T_p)$ this yields
\[
	T_q\widetilde T_q^{-1}f
	=T_qM_{-\alpha}T_p^{-1}M_\alpha f
	=M_{-\alpha}\underbrace{M_\alpha T_q M_{-\alpha}}_{=T_p}
	 T_p^{-1}M_\alpha f=f.
\]
Completely analogous we deduce $\widetilde T_q^{-1}T_qf=f$
for $f\in C^\infty_c(\R,D_3(\mathcal T_{\theta,q}))$. 
Hence $\widetilde T_q^{-1}=T_q^{-1}$ on the set 
$C^\infty_c(\R,D_3(\mathcal T_{\theta,q}))$.
By a similar approximation argument as in (\ref{approxvci})
we see that this consistency extends to
$C^\infty_c(\R,E^q_3)$.
This finally yields the assertion.
\end{proof}
In the proof of consistency we also employ the following
density pro\-perty.
\begin{lemma}\label{denselem2}
Let $1<p\le q<\infty$. Then we have
\[
	\widetilde{\Theta}_*^q\bigl(C^\infty_c(\R,E^q_3)\bigr)
	\hookd \mL^q\cap\mL^p.
\]
\end{lemma}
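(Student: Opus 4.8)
The plan is to transport the statement to the layer $\Omega=\R\times I$ by means of the isometric isomorphisms $\widetilde{\Theta}_*^p$, $\widetilde{\Theta}_*^q$ from Lemma~\ref{lemma_result} and to read it there as a weighted density statement. First I record the elementary identity
\[
	\widetilde{\Theta}_*^q g=\widetilde{\Theta}_*^p\bigl(e^{\delta x}g\bigr),\qquad
	\delta:=\beta_q-\beta_p=\tfrac2p-\tfrac2q\ge0,
\]
which is immediate from $\widetilde{\Theta}_*^p=\Psi^{-1}\mathcal OM_{\beta_p-2}$ and the analogous formula with $q$ in place of $p$. Since $\widetilde{\Theta}_*^p$, $\widetilde{\Theta}_*^q$ are isometric onto $\mL^p$, $\mL^q$, this identity shows that $h\in\mL^q\cap\mL^p$ precisely when its pullback $w:=\widetilde{\Theta}^*_q h$ satisfies $w\in L^q(\R,E^q_3)$ and $e^{\delta x}w\in L^p(\R,E^p_3)$, with $\|h\|_{\mL^q}=\|w\|_{L^q(\R,E^q_3)}$ and $\|h\|_{\mL^p}=\|e^{\delta x}w\|_{L^p(\R,E^p_3)}$. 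Hence it suffices to prove that $C^\infty_c(\R,E^q_3)$ is dense in
\[
	\mathcal W:=\bigl\{w\in L^q(\R,E^q_3):\ e^{\delta x}w\in L^p(\R,E^p_3)\bigr\}
\]
with respect to the norm $\|w\|_{L^q(\R,E^q_3)}+\|e^{\delta x}w\|_{L^p(\R,E^p_3)}$.

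For the inclusion part I would use $E^q_3\hook E^p_3$, which follows from H\"older's inequality on the bounded interval $I$ (this is where $p\le q$ enters): any $f\in C^\infty_c(\R,E^q_3)$ then has $f$ and $e^{\delta x}f$ in $L^p(\R,E^p_3)$ by compactness of its support, so by the displayed identity $\widetilde{\Theta}_*^q f\in\mL^p$, while $\widetilde{\Theta}_*^q f\in\mL^q$ is trivial.

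The density part is then a standard two-step approximation on $\mathcal W$. Given $w\in\mathcal W$, first truncate by a dilated cutoff $\chi_n(x)=\chi(x/n)$ to obtain compactly supported $w_n:=\chi_n w$; dominated convergence, with dominating functions $x\mapsto\|w(x)\|_{E^q_3}^q$ and $x\mapsto\|e^{\delta x}w(x)\|_{E^p_3}^p$ (integrable exactly because $w\in\mathcal W$), yields $w_n\to w$ in $\mathcal W$. Next mollify $w_n$ in the $x$-variable to land in $C^\infty_c(\R,E^q_3)$; since $w_n$ also lies in $L^p(\R,E^p_3)$ (again the H\"older embedding together with compact support) and $e^{\delta x}$ is bounded on the fixed compact support of $w_n$, the mollification converges to $w_n$ simultaneously in $L^q(\R,E^q_3)$ and in the weighted $L^p$-norm, that is, in $\mathcal W$. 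A diagonal choice $\eps=\eps(n)\to0$ then finishes. One also checks along the way that truncation and mollification preserve values in the closed subspace $E^q_3$.

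The point I expect to require the most care is keeping the $\mL^q$- and $\mL^p$-topologies under simultaneous control in the presence of the exponentially growing weight $e^{\delta x}$. In fact this turns out to cause no trouble: the membership $e^{\delta x}w\in L^p(\R,E^p_3)$ already encodes the decay of $w$ as $x\to+\infty$ that the weighted dominated-convergence estimate needs, and all approximants are supported in compact $x$-intervals on which $e^{\delta x}$ is bounded, so the weight never interferes with the mollifier. (Alternatively the density step could be extracted from the general density lemma Lemma~\ref{denselem} applied on the layer, but the direct argument seems more transparent here.)
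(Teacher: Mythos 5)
Your proof is correct and follows essentially the same route as the paper: transport to the layer via $\widetilde{\Theta}_*^q$, identify $\mL^q\cap\mL^p$ with $L^q(\R,E^q_3)\cap M_{-\delta}\bigl(L^p(\R,E^p_3)\bigr)$ (your space $\mathcal W$), and then approximate by truncation to a compact $x$-interval followed by smoothing, using $E^q_3\hook E^p_3$ and the boundedness of $e^{\delta x}$ on compact sets. The only cosmetic difference is that the paper approximates the truncated function directly by density of $C^\infty_c(J,E^q_3)$ in $L^q(J,E^q_3)$ rather than by explicit mollification.
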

\begin{proof}
Note that
\[
	\mL^p=\widetilde{\Theta}_*^p\bigl(L^p(\R,E^p_3)\bigr)
	=\widetilde{\Theta}_*^qM_{-\alpha}\bigl(L^p(\R,E^p_3)\bigr)
\]
with $M_{-\alpha}$ as defined in the proof of 
Lemma~\ref{commuteexp} and where $M_{-\alpha}\bigl(L^p(\R,E^p_3)\bigr)$ 
is again equipped with its canonical norm. This shows that 
$\widetilde{\Theta}_*^q\in
\sLis\left(M_{-\alpha}\bigl(L^p(\R,E^p_3)\bigr),\, \mL^p\right)$ with
inverse $\widetilde{\Theta}^*_q$. Since $\widetilde{\Theta}_*^q\in
\sLis\left(L^q(\R,E^q_3),\, \mL^q\right)$ has the same inverse
we conclude that
\[
	\widetilde{\Theta}_*^q\in
	\sLis\biggl(L^q(\R,E^q_3)\cap M_{-\alpha}\bigl(L^p(\R,E^p_3)\bigr),\, 
	\mL^q\cap \mL^p\biggr).
\]
Thus, it suffices to show that 
\[
	C^\infty_c(\R,E^q_3)
	\ \hookd \ L^q(\R,E^q_3)\cap
	M_{-\alpha}\bigl(L^p(\R,E^p_3)\bigr)=:Y.
\]

To this end, pick $v\in Y$ and choose a bounded interval $J\subset\R$ such that
\[
	\|v-\chi_{J}v\|_Y
	=\|v-\chi_{J}v\|_{L^q(\R,E^q_3)}
	+\|M_{\alpha}(v-\chi_{J} v)\|_{L^p(\R,E^p_3)}<\eps/2,
\]
where $\chi_{J}$ denotes the characteristic function to $J$.
By the fact that $\chi_{J}v\in L^q(J,E^q_3)$ we find 
$(v_k)\subset C^\infty_c(J,E^q_3)$ such that $v_k\to \chi_{J}v$
in $L^q(\R,E^q_3)$. Note that, thanks to $E^q_3\hook E^p_3$, we also have
\[
	\|M_{\alpha}(\chi_Jv-v_k)\|_{L^p(\R,E^p_3)}
	\le C(J,\alpha)\|\chi_Jv-v_k\|_{L^p(J,E^q_3)}\to 0
	\quad (k\to\infty).
\]
Consequently, choosing $k$ large enough we can achieve
\[
	\|v-v_k\|_Y\le \|v-\chi_Jv\|_Y+\|\chi_Jv-v_k\|_Y<\eps
\]
and the assertion is proved.
\end{proof}
Now we are in position to prove the claimed consistency.
\begin{proposition}\label{konsitentbp}
The family $(\mB_p^{-1})_{1<p<\infty}$ is consistent
on the subscale $(\mL^p)_{1<p<\infty}$.
\end{proposition}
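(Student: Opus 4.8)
The plan is to fix $1<p\le q<\infty$ and to show that $\mB_p^{-1}f=\mB_q^{-1}f$ for every $f\in\mL^p\cap\mL^q$; note $\beta_p=2-2/p\le 2-2/q=\beta_q$, which is precisely the configuration in which Lemma~\ref{commuteexp} applies. First I would reduce to a dense subspace. Both $\mB_p^{-1}$ and $\mB_q^{-1}$ are bounded by Proposition~\ref{proppropq}(4), and since $D(\mB_p)\subset K^2_{p,0}(G,\R^2)\hookrightarrow L^1_{loc}(G,\R^2)$ (and likewise for $q$), both operators restrict to continuous maps from $\mL^p\cap\mL^q$, endowed with its natural norm, into the Hausdorff space $L^1_{loc}(G,\R^2)$. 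By Lemma~\ref{denselem2} the set $\widetilde{\Theta}_*^q\bigl(C^\infty_c(\R,E^q_3)\bigr)$ is dense in $\mL^q\cap\mL^p$, so it suffices to verify $\mB_p^{-1}f=\mB_q^{-1}f$ for $f=\widetilde{\Theta}_*^q g$ with $g\in C^\infty_c(\R,E^q_3)$.

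For such an $f$ I would run the following chain of identifications, using the representation~\eqref{repbm1} together with the explicit form of the transforms from Subsection~\ref{first}. Writing everything through $\Psi$, $\mathcal O$ and the multiplication operators $M_\alpha v=e^{\alpha x}v$, one reads off from the exponents $2-\beta_p$ and $\beta_p$ built into $\widetilde{\Theta}^*_p$ and $\Theta_*^p$ that $\widetilde{\Theta}^*_p\circ\widetilde{\Theta}_*^q=M_{\beta_q-\beta_p}$ and $\Theta_*^p\circ M_{\beta_q-\beta_p}=\Theta_*^q$. Hence $\widetilde{\Theta}^*_p f=e^{(\beta_q-\beta_p)x}g$, which lies in $L^p(\R,E^p_3)$ by~\eqref{firstemb} since $\beta_q\ge\beta_p$ and $g$ has compact support, so $T_p^{-1}$ may legitimately be applied and
\[
	\mB_p^{-1}f=\Theta_*^p\,T_p^{-1}\,e^{(\beta_q-\beta_p)x}g.
\]

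It then remains only to commute $T_p^{-1}$ past the exponential weight, which is exactly the content of Lemma~\ref{commuteexp}: $T_p^{-1}e^{(\beta_q-\beta_p)x}g=e^{(\beta_q-\beta_p)x}T_q^{-1}g$. Combining this with $\Theta_*^p\circ M_{\beta_q-\beta_p}=\Theta_*^q$ and $\widetilde{\Theta}^*_q\widetilde{\Theta}_*^q=\mathrm{id}$ yields
\[
	\mB_p^{-1}f=\Theta_*^q\,T_q^{-1}g=\Theta_*^q\,T_q^{-1}\,\widetilde{\Theta}^*_q f=\mB_q^{-1}f ,
\]
and the density reduction from the first step then upgrades this to all $f\in\mL^p\cap\mL^q$. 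I expect the only substantial ingredient to be Lemma~\ref{commuteexp}, which is already available; the remaining work is the conjugation bookkeeping of the $p$-dependent exponents and the verification that the density step is legitimate, for which the embedding of the $K^2$-spaces into $L^1_{loc}(G,\R^2)$ — a target common to all the $\mB_p^{-1}$ — is the point to be checked.
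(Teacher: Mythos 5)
Your proposal is correct and follows essentially the same route as the paper: the identities $\widetilde{\Theta}^*_p\widetilde{\Theta}_*^q=M_{\beta_q-\beta_p}$ and $\Theta_*^p M_{\beta_q-\beta_p}=\Theta_*^q$, the commutation Lemma~\ref{commuteexp} applied on $\widetilde{\Theta}_*^q\bigl(C^\infty_c(\R,E^q_3)\bigr)$, and the density upgrade via Lemma~\ref{denselem2} and Proposition~\ref{proppropq}(4) are exactly the ingredients of the paper's proof. Your additional remark that both resolvents map continuously into the common Hausdorff target $L^1_{loc}(G,\R^2)$ just makes explicit the (implicit) justification of the density step.
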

\begin{proof}
Let $p,q\in(1,\infty)$ and without loss of generality $p\le q$.
By the definition of $\Theta_*^p, \ \widetilde{\Theta}^*_p  $ we have
\begin{equation*}
\Theta_*^p = \Theta_*^{q}e^{-(\beta_q - \beta_{p})x} \quad \text{and}
\quad \widetilde{\Theta}_p^* = e^{(\beta_q - \beta_{p})x}
\widetilde{\Theta}^*_{q}.
\end{equation*}
Now, pick 
\[
	f\in \widetilde{\Theta}_*^q\left(C^\infty_c(\R,E^q_3)\right)
	\subset \mL^p\cap\mL^q.
\]
From (\ref{repbm1}) and Lemma~\ref{commuteexp} we infer
\begin{align*}
	\mB_p^{-1} f 
	&=\Theta_*^{p}\,T_p^{-1}\,\widetilde{\Theta}^*_{p}f\\
	&= \Theta_*^{q}\, e^{-(\beta_q - \beta_{p})x} \, 
	T_p^{-1} \, e^{(\beta_q - \beta_{p})x} \, 
	\widetilde{\Theta}^*_{q}f\\
	&=\Theta_*^{q}\, T_q^{-1}\, 
	\widetilde{\Theta}^*_{q}f
	=\mB_q^{-1}f. 
\end{align*}
Proposition~\ref{proppropq}(4) 
and Lemma~\ref{denselem2} then yield the assertion.
\end{proof}

\subsection{The diffusion equation} 
\label{secreglap}

As before let $\theta_0 \in (0, \pi)$ be the opening angle of the wedge
$G$. For $1<p<\infty$ we define the Laplacian $A_p$ subject to 
perfect slip boundary conditions in $L^p(G,\R^2)$ by 
\begin{equation}
	\begin{aligned}	
		 A_p u&:= - \Delta u,\\
		 u\in D(A_p) 
		 &:=   \left\{ u \in W^{2,p}(G, \mathbb{R}^2): 
		 \ \textrm{curl} \ u=0, \ \nu \cdot u=0 \ \text{on} \ 
		 \partial G \right\}
		 \cap K^2_p(G,\R^2).
	\end{aligned}
	\label{operator_ap}
\end{equation} 
Now \cite[Theorem 1.1 and Corollary 3.15]{Maier} gives the following
result.
\begin{theorem}\label{thmmasa}
There is a $\delta=\delta(\theta_0)$ such that for 
$1<p<1+\delta$ the operator $A_p$ as defined in (\ref{operator_ap})
has maximal regularity on 
$L^p(G,\R^2)$. 
\end{theorem}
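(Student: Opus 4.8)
The plan is to derive Theorem~\ref{thmmasa} directly from the results already established in the excerpt, rather than re-proving maximal regularity from scratch. The key observation is Remark~\ref{rem_ex_p}: for the transformed operator on the layer we have $-T_{p,x}-T_{p,\theta}-\varepsilon\in\mathcal H^\infty(L^p(\Omega,\R^2))$ with $\phi^\infty_{-T_{p,x}-T_{p,\theta}-\varepsilon}<\pi/2$ for some $\varepsilon>0$, \emph{provided} the spectral condition $-\beta_p^2\notin\sigma(T_{p,\theta})$ holds (this is what powers Theorem~\ref{theorem_haupt}). So the first step is to check that there exists $\delta=\delta(\theta_0)>0$ such that for all $1<p<1+\delta$ the number $-\beta_p^2=-(2-2/p)^2$ avoids the discrete set $\sigma(T_{p,\theta})$ from \eqref{spectt}. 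Since $\beta_p=2-2/p\to 0$ as $p\to 1^+$, we have $-\beta_p^2\to 0$, while every element of $\sigma(T_{p,\theta})$ is $\le -1$; hence for $p$ close enough to $1$ the condition is automatically satisfied, and in fact \eqref{spec_cond_g0} holds. This fixes the range $1<p<1+\delta$.

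Next I would transfer the $\mathcal H^\infty$-property back to the wedge. By Lemma~\ref{lemma_result}, the maps $\widetilde\Theta_*^p\in\mathscr L_{is}(L^p(\Omega,\R^2),L^p(G,\R^2))$ (for $\gamma=0$) and $\Theta_*^p\in\mathscr L_{is}(D(T_p),D(B_p))$ are similarity transforms intertwining $T_p$ with the wedge Laplacian $B_p$; moreover $\widetilde\Theta_*^p T_p\Theta_*^{*,p}=B_p$. Since $B_p$ (for $\gamma=0$) has the same domain-characterization as $A_p$ in \eqref{operator_ap} — indeed $K^2_p(G,\R^2)\cap W^{2,p}$ with the stated boundary conditions is precisely $D(A_p)$ — we identify $A_p=B_p$ as operators. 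A bounded $\mathcal H^\infty$-calculus is invariant under similarity transformations with isomorphisms (the functional calculus is defined by a Dunford integral that conjugates), so $-A_p-\varepsilon\in\mathcal H^\infty(L^p(G,\R^2))$ with $\mathcal H^\infty$-angle $<\pi/2$. One should also note $\overline{D(A_p)}=\overline{R(A_p)}=L^p(G,\R^2)$, which follows from the corresponding density facts on the layer (cf.\ Lemma~\ref{fullellreg} and the density lemmas) carried through the isomorphisms.

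Finally I would invoke the standard implications of the $\mathcal H^\infty$-calculus: an operator with bounded $\mathcal H^\infty$-calculus of angle $<\pi/2$ is in particular $\mathcal R$-sectorial with $\mathcal R$-angle $<\pi/2$, and on a space with the $\mathrm{UMD}$ property — which $L^p(G,\R^2)$ has for $1<p<\infty$ — this is equivalent to maximal $L^q$-regularity of the associated Cauchy problem, by the Weis theorem \cite{Weis,Denk-Hieber-Pruess:Maximal-Regularity,Kunstmann}. Since $A_p$ differs from $A_p+\varepsilon$ only by a bounded shift, and the shifted operator $\varepsilon+A_p$ thus has maximal regularity on the half-line, this yields maximal regularity for $A_p$ itself. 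Altogether, for $1<p<1+\delta(\theta_0)$ the operator $A_p$ has maximal regularity on $L^p(G,\R^2)$, which is the assertion.

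The main obstacle I expect is the bookkeeping around the similarity transform: one must verify carefully that $\widetilde\Theta_*^p$ and $\Theta_*^p$ genuinely conjugate $T_p$ to $A_p$ as \emph{unbounded} operators — matching domains exactly, including the interpretation of the boundary conditions in the local $W^{2,p}$ sense near the vertex — and that the $\mathcal H^\infty$-calculus transfers through this conjugation without loss of angle. Everything else (the spectral gap for $p$ near $1$, the passage $\mathcal H^\infty\Rightarrow\mathcal R$-sectorial$\Rightarrow$ maximal regularity) is routine given the references and the results already in the excerpt. Note that this argument in fact reproves Theorem~\ref{thmmasa} with the explicit angle information $\phi^{\mathcal R}_{A_p}<\pi/2$, paralleling Theorem~\ref{main2}.
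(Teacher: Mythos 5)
The paper does not actually prove this theorem: it is quoted verbatim from \cite[Theorem 1.1 and Corollary 3.15]{Maier}, where it is established via the operator-sum theorem for \emph{non-commuting} operators of Pr\"uss--Simonett \cite{Pruss}. Your attempt to rederive it from Remark~\ref{rem_ex_p} has a genuine gap at the transfer step. The maps $\Theta_*^p$ and $\widetilde\Theta_*^p$ are \emph{different} isomorphisms --- they differ by the unbounded multiplication $M_{\pm 2}=e^{\pm 2x}$ --- and the correct relation is $B_p=\widetilde\Theta_*^p\,(-T_p)\,\Theta^*_p$, not $B_p=\Theta_*^p\,(-T_p)\,\Theta^*_p$. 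This is exactly what Lemma~\ref{lemma_result} records: an isomorphism between the elliptic solution operators, but \emph{not} a similarity of unbounded operators on a fixed base space. Consequently the bounded $\mathcal H^\infty$-calculus of $-T_p-\varepsilon$ on the layer does not conjugate to an $\mathcal H^\infty$-calculus, nor even to sectoriality, of $A_p$ on the wedge. Concretely, the resolvent equation $(\lambda-\Delta)u=f$ transforms under $\Theta^*_p$ followed by multiplication with $e^{2x}$ into $\lambda e^{2x}v-T_pv=g$: the spectral parameter acquires the unbounded, non-commuting coefficient $e^{2x}$, and controlling the sum $\lambda M_2-T_p$ is precisely where the commutator condition of \cite{Pruss,Maier} enters and where the restriction $1<p<1+\delta(\theta_0)$ originates. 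If your similarity argument were valid, it would yield $\mathcal R$-sectoriality of $A_p$ on all of $L^p(G,\R^2)$ for \emph{every} $p$ satisfying \eqref{spec_cond_g0}, which would render the entire consistency machinery of Subsections~\ref{secconsis}--\ref{secreglap} and the restriction to the subspace $\mL^p$ in Theorem~\ref{mainlap} superfluous --- a strong signal that the step cannot be repaired.

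A secondary error: you assert that every element of $\sigma(T_{p,\theta})$ is $\le -1$. By \eqref{spectt} the eigenvalue $-(\pi/\theta_0-1)^2$ lies in $(-1,0)$ for every $\theta_0\in(\pi/2,\pi)$ and tends to $0$ as $\theta_0\to\pi$. This is exactly why $\delta$ must depend on $\theta_0$ and why $\delta(\theta_0)\to 0$ as $\theta_0\to\pi$ (Remark~\ref{remthmmasa}(b)); under your claim the admissible interval of $p$ would be independent of the opening angle, which is false.
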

\begin{remark}\label{remthmmasa}
(a)\ Note that in \cite{Maier} the case of a three-dimensional wedge
is considered. However, by an inspection of the single steps
in the proof it is clear that the case of
a two-dimensional wedge is completely analogous.\\[1mm]
(b)\ Also observe that $\delta>0$ can be very small. In fact,
the methods in \cite{Maier} yield the constraint
$2-2/p<\min\{1,(\pi/\theta_0-1)\}$. 
Hence we have $\delta(\theta_0)\to 0$ for $\theta_0\to\pi$.\\[1mm]
(c)\ From the proof of \cite[Theorem 1.1 and Corollary 3.15]{Maier}
it also follows that for each $\lambda\in\rho(A_p)$
the family $\left((\lambda-A_p)^{-1}\right)_{1<p<1+\delta}$ is
consistent on $\left(L^p(G,\R^2)\right)_{1<p<1+\delta}$.
\end{remark}
By a scaling argument we obtain the following estimate in
the homogeneous norm.
\begin{lemma}\label{scallem}
Let $1<p<\infty$ and $\rho(A_p)\neq\emptyset$. Then we have 
\[
	\|u\|_{K_p^2(G,\R^2)}
	\le C\|A_p u\|_{L^p(G,\R^2)}
	\quad (u\in D(A_p)).
\]
\end{lemma}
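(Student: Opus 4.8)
The statement to be proven is the homogeneous a priori estimate
\[
	\|u\|_{K_p^2(G,\R^2)}\le C\|A_p u\|_{L^p(G,\R^2)}\qquad(u\in D(A_p)),
\]
under the standing assumption $\rho(A_p)\neq\emptyset$, where the constant $C$ depends only on $p$ and $\theta_0$. The natural route is a homogeneity/scaling argument exploiting that the wedge $G$ defined in \eqref{domain_G} is invariant under the dilations $x\mapsto \lambda x$ for $\lambda>0$, that both $-\Delta$ and the boundary conditions $\curl u=0$, $\nu\cdot u=0$ are invariant under this scaling, and that the weighted norm $\|u\|_{K_p^2}$ and the norm $\|A_pu\|_{L^p}$ scale with the \emph{same} power of $\lambda$. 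So the plan is: first establish the inequality with a constant by a contradiction/compactness-free argument that exploits this exact homogeneity, then record the scale invariance of the full norm.

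First I would fix $\lambda>0$ and define the dilation $u_\lambda(x):=u(\lambda x)$. A direct computation gives $(-\Delta u_\lambda)(x)=\lambda^2(-\Delta u)(\lambda x)$, hence $\|A_p u_\lambda\|_{L^p(G)}=\lambda^{2-2/p}\|A_pu\|_{L^p(G)}$ by the change of variables $y=\lambda x$ (the Jacobian contributes $\lambda^{-2/p}$ in the $L^p$ norm). Next, since $\rho(\lambda x)=\lambda\rho(x)$ and $\partial^\alpha u_\lambda(x)=\lambda^{|\alpha|}(\partial^\alpha u)(\lambda x)$, one checks that each term $\rho^{|\alpha|-2}\partial^\alpha u$ of the $K_p^2$-norm satisfies
\[
	\|\rho^{|\alpha|-2}\partial^\alpha u_\lambda\|_{L^p(G)}
	=\lambda^{2-2/p}\|\rho^{|\alpha|-2}\partial^\alpha u\|_{L^p(G)},
\]
so in fact $\|u_\lambda\|_{K_p^2(G)}=\lambda^{2-2/p}\|u\|_{K_p^2(G)}$; the crucial point is that the exponents of $\lambda$ match those coming from $A_p$, precisely because the weight $\rho^{|\alpha|-2}$ compensates the $\lambda^{|\alpha|}$ from the derivatives. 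Also $u_\lambda\in D(A_p)$ whenever $u\in D(A_p)$, since dilation preserves $W^{2,p}_{\mathrm{loc}}$, the (local) boundary conditions on $\partial G$, and the membership $\rho^{|\alpha|-2}\partial^\alpha u\in L^p$.

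The real work is to get the inequality for \emph{some} constant before invoking scaling. Here I would argue as follows: by the transformation machinery of Subsection~\ref{first} and Lemma~\ref{lemma_result}, $A_p$ restricted appropriately is, via $\Theta_*^p$ and $\widetilde\Theta_*^p$, equivalent to an invertible operator on the layer (after the substitution that removes the weight and absorbs $e^{-2x}$); more directly, since $\rho(A_p)\neq\emptyset$ we may fix $\mu\in\rho(A_p)$ and write, for $u\in D(A_p)$, $(\mu - A_p)u = (\mu - A_p)u$, whence $u=(\mu-A_p)^{-1}(\mu-A_p)u$ and therefore $\|u\|_{D(A_p)}\le C(\|u\|_{L^p}+\|A_pu\|_{L^p})$ with the graph norm; equivalently $\|u\|_{K_p^2}\le C(\|u\|_{L^p(G_1)}+\|A_pu\|_{L^p})$ where $G_1:=G\cap\{1/2<\rho<2\}$ absorbs the low-frequency part — in any case one obtains an \emph{inhomogeneous} estimate of the form $\|u\|_{K_p^2(G)}\le C\bigl(\|A_pu\|_{L^p(G)}+\|u\|_{L^p(G_1)}\bigr)$ from the already-established solvability. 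The hard part is then to remove the lower-order term $\|u\|_{L^p(G_1)}$: apply this inhomogeneous estimate to the dilated function $u_\lambda$, use the exact scaling identities above to get
\[
	\lambda^{2-2/p}\|u\|_{K_p^2(G)}\le C\Bigl(\lambda^{2-2/p}\|A_pu\|_{L^p(G)}+\|u_\lambda\|_{L^p(G_1)}\Bigr),
\]
and observe $\|u_\lambda\|_{L^p(G_1)}=\lambda^{-2/p}\|u\|_{L^p(G\cap\{\lambda/2<\rho<2\lambda\})}\le \lambda^{-2/p}\|u\|_{L^p(G)}$, which after dividing by $\lambda^{2-2/p}$ and letting $\lambda\to\infty$ kills the remainder term (using $\|u\|_{L^p(G)}<\infty$, which needs a separate check — it follows since $u\in K_p^2(G)$ with $|\alpha|=0$ gives $\rho^{-2}u\in L^p$ only locally, so more carefully one takes $\lambda\to 0$ or $\lambda\to\infty$ depending on where $\|u\|_{L^p}$ is finite, or argues directly on the homogeneous space). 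The cleanest version: the inhomogeneous estimate with $\|u\|_{L^p(G_1)}$ on the right, applied to $u_\lambda$ and rescaled, gives $\|u\|_{K_p^2(G)}\le C\|A_pu\|_{L^p(G)}+C\lambda^{-2}\|u\|_{L^p(G\cap\{\rho\sim\lambda\})}$; since $\rho^{-2}u\in L^p_{\mathrm{loc}}$ and more, the remainder tends to $0$ along a suitable sequence $\lambda_k\to\infty$ (as the tails of an $L^p$ function vanish), yielding the claim. I expect the technical obstacle to be exactly this last limiting step — verifying that the lower-order remainder, after rescaling, can be made to vanish — which requires being careful about which norm of $u$ is a priori finite on all of $G$; this is handled by the definition of $K_p^2(G)$ together with the fact that $u$ already has the regularity \eqref{estoptellreg} (equivalently $u\in D(B_p)$ with $\gamma=0$), so every term $\rho^{|\alpha|-2}\partial^\alpha u$ is genuinely in $L^p(G)$ and its dyadic pieces sum to a finite quantity, forcing the pieces to vanish at infinity.
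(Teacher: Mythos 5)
Your proposal is correct and follows essentially the same route as the paper: combine the resolvent bound $\|v\|_{K_p^2}\le C\|(\mu-A_p)v\|_{L^p}$ for $\mu\in\rho(A_p)$ with the exact scale invariance of both norms under dilation, and let $\lambda\to\infty$ to kill the lower-order remainder $C\lambda^{-2}\|u\|_{L^p(G)}$. The finiteness worry at the end is moot, since $D(A_p)\subset W^{2,p}(G,\R^2)$ by definition, so $\|u\|_{L^p(G)}<\infty$ and no localized annulus $G_1$ or dyadic argument is needed.
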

\begin{proof}
We have $\mu-A_p\in\sLis\left(D(A_p),\,L^p(G,\R^2)\right)$
for a $\mu\in\C$.
We introduce the rescaled function $J_\lambda u(x):=\lambda^{-2}u(\lambda x)$,
$\lambda>0$, and note that the wedge $G$ is invariant under this 
scaling. This yields
\begin{align*}
	\|u\|_{K_p^2(G,\R^2)}
	&= \lambda^{2/p}\|J_\lambda u\|_{K_p^2(G,\R^2)}
	\le C\lambda^{2/p}\|(A_p-\mu)J_\lambda u\|_{L^p(G,\R^2)}\\
	&\le
	C\lambda^{2+2/p}\|J_\lambda(A_p-\mu\lambda^{-2})u\|_{L^p(G,\R^2)}\\
	&= C\|(A_p-\mu\lambda^{-2})u\|_{L^p(G,\R^2)}
	\quad (\lambda>0,\ u\in D(A_p)).
\end{align*}
Letting $\lambda\to \infty$ yields the assertion.
\end{proof}
\begin{remark}\label{scalrem}
The estimate in Lemma~\ref{scallem} implies that $A_p$ is injective
provided that $\rho(A_p)\neq\emptyset$. This implies that
$A_p$ is sectorial or $\cR$-sectorial, whenever 
$(\lambda(\lambda+A_p)^{-1})_{\lambda>0}$ is uniformly bounded or 
$\cR$-bounded, respectively, see \cite{Haase}.
\end{remark}

Next, we show that Theorem~\ref{thmmasa} is still valid on 
$\mL^p$.
To this end, for $1<p<\infty$ we define $\mA_p$ as 
the part of $A_p$ in $\mL^p$, that is
\begin{align*}
	\mA_pu := A_p|_{\mL^p}u,&\quad
	u\in D(\mA_p):= \left\{v\in D(A_p)\cap \mL^p:
	\ A_p u\in \mL^p\right\}.
\end{align*}
With the projectors $\mQ$ and $\widetilde\mQ$ as defined in
(\ref{defprojq}) we obtain
\begin{lemma}\label{lempartap}
Let $1<p<\infty$. We have
\begin{enumerate}
\item $D(A_p)= D(B_p)\cap L^p(G,\R^2)$ with equivalent norms 
as well as $\mQ=\widetilde\mQ$ 
and $A_p=B_p$ on $D(A_p)$. In particular, $\widetilde\mQ$
on $L^p(G,\R^2)$ is the continuous extension of
$\mQ$ regarded as a projector on $D(A_p)$.
\item $\widetilde \mQ A_p u=A_p \mQ u$ for $u\in D(A_p)$.
\item $\mQ(\lambda-A_p)^{-1}f=(\lambda-A_p)^{-1}\widetilde\mQ f$
for $f\in L^p(G,\R^2)$ and $\lambda\in\rho(A_p)$.
\item $D(\mA_p)=D(A_p)\cap \mL^p=\mQ D(A_p)$.
\item $(\lambda-\mA_p)^{-1}
	=(\lambda-A_p)^{-1}|_{\mL^p}$
	for $\lambda\in\rho(A_p)$.
\item $\left((\lambda-\mA_p)^{-1}\right)_{1<p<1+\delta}$ is
consistent on $\left(\mL^p\right)_{1<p<1+\delta}$
for $\lambda\in\rho(\mA_p)$.
\end{enumerate}
\end{lemma}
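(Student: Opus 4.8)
The plan is to establish part~(1) first, since that is where essentially all the analytic substance sits, and then to deduce parts~(2)--(6) by purely formal manipulations from part~(1), Proposition~\ref{proppropq}, and Remark~\ref{remthmmasa}(c).

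For part~(1), the inclusion $D(A_p)\subseteq D(B_p)\cap L^p(G,\R^2)$ is immediate, since $W^{2,p}(G,\R^2)\subseteq L^p(G,\R^2)$. For the converse one has to show that $u\in K^2_p(G,\R^2)\cap L^p(G,\R^2)$ forces $\nabla u\in L^p(G,\R^2)$ --- then $u\in W^{2,p}(G,\R^2)$, because $u$ and $\nabla^2u$ already lie in $L^p$ by hypothesis. I would split $G$ at the unit ball $B_1=\{\rho<1\}$. On $G\cap B_1$ one has $\rho\le1$, hence $|\nabla u|\le\rho^{-1}|\nabla u|$, and so $\nabla u\in L^p(G\cap B_1,\R^2)$, because $\rho^{-1}\nabla u\in L^p(G,\R^2)$ is part of the $K^2_p$-hypothesis (alternatively invoke \cite[Lemma~II.6.1 and Remark~II.6.1]{Galdi2012}). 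On $G\setminus B_1$ I would use the dyadic decomposition $G\setminus\overline{B_1}=\bigcup_{k\ge0}A_k$, $A_k:=G\cap\{2^k<\rho<2^{k+1}\}=2^kA_0$, where $A_0:=G\cap\{1<\rho<2\}$ is a bounded Lipschitz domain. On $A_0$ the interpolation inequality $\|\nabla w\|_{L^p(A_0)}\le\varepsilon\|\nabla^2w\|_{L^p(A_0)}+C_0\varepsilon^{-1}\|w\|_{L^p(A_0)}$ holds for $\varepsilon\in(0,1]$; applying it to $w(\,\cdot\,)=u(2^k\,\cdot\,)$ and choosing $\varepsilon=2^{-k}$ makes all scaling factors cancel, giving $\|\nabla u\|_{L^p(A_k)}\le\|\nabla^2u\|_{L^p(A_k)}+C_0\|u\|_{L^p(A_k)}$ uniformly in $k\ge0$; summing $p$-th powers over $k$ yields $\|\nabla u\|_{L^p(G\setminus B_1)}\le C(\|\nabla^2u\|_{L^p(G)}+\|u\|_{L^p(G)})$. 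These two estimates also give $\|u\|_{W^{2,p}(G)}\le C(\|u\|_{K^2_p(G,\R^2)}+\|u\|_{L^p(G,\R^2)})$ on $D(A_p)$, hence the asserted norm equivalence. Finally $A_p=B_p=-\Delta$ on $D(A_p)$ is trivial, $\mQ=\widetilde\mQ$ on $D(A_p)$ is Proposition~\ref{proppropq}(2), and since $C^\infty_c(G,\R^2)\subseteq D(A_p)$ is dense in $L^p(G,\R^2)$ while $\widetilde\mQ$ is bounded there, $\widetilde\mQ|_{L^p}$ is the continuous extension of $\mQ|_{D(A_p)}$.

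Parts~(2)--(5) then follow formally. For~(2): if $u\in D(A_p)$, then $\mQ u=\widetilde\mQ u\in D(B_p)\cap L^p(G,\R^2)=D(A_p)$, and Proposition~\ref{proppropq}(3) gives $A_p\mQ u=B_p\mQ u=\widetilde\mQ B_p u=\widetilde\mQ A_p u$. For~(3): for $\lambda\in\rho(A_p)$ and $f\in L^p(G,\R^2)$ set $u=(\lambda-A_p)^{-1}f$; using $\mQ u=\widetilde\mQ u$ and~(2) one computes $(\lambda-A_p)\mQ u=\lambda\widetilde\mQ u-\widetilde\mQ A_p u=\widetilde\mQ f$, i.e.\ $\mQ(\lambda-A_p)^{-1}f=(\lambda-A_p)^{-1}\widetilde\mQ f$. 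For~(4): if $v\in D(A_p)\cap\mL^p$, then $\widetilde\mQ v=v$ (as $v\in\mL^p=\widetilde\mQ(L^p(G,\R^2))$), hence $\mQ v=v$ by~(1), so $A_p v=A_p\mQ v=\widetilde\mQ A_p v\in\mL^p$ by~(2); thus $D(A_p)\cap\mL^p\subseteq D(\mA_p)$, the reverse being trivial, and $D(\mA_p)=\mQ D(A_p)$ because $\mQ$ carries $D(A_p)$ into $D(A_p)\cap\mL^p$ (by~(1) and boundedness of $\widetilde\mQ$) and acts as the identity on $D(\mA_p)$. For~(5): $\mL^p$ is closed, being the range of the bounded projector $\widetilde\mQ$; by~(3) the resolvent $(\lambda-A_p)^{-1}$, $\lambda\in\rho(A_p)$, maps $\mL^p$ into itself; and $\rho(A_p)\ne\emptyset$ forces $A_p$ to be closed. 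The standard description of the part of a closed operator in a resolvent invariant closed subspace then gives $\rho(A_p)\subseteq\rho(\mA_p)$ and $(\lambda-\mA_p)^{-1}=(\lambda-A_p)^{-1}|_{\mL^p}$.

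For~(6): fix $\lambda\in\bigcap_{1<p<1+\delta}\rho(A_p)$, which is nonempty (e.g.\ any $\lambda<0$, since the operators in Theorem~\ref{thmmasa} are sectorial). For $1<p\le q<1+\delta$ and $f\in\mL^p\cap\mL^q\subseteq L^p(G,\R^2)\cap L^q(G,\R^2)$, Remark~\ref{remthmmasa}(c) gives $(\lambda-A_p)^{-1}f=(\lambda-A_q)^{-1}f$, and by~(5) this equals $(\lambda-\mA_p)^{-1}f=(\lambda-\mA_q)^{-1}f$; moreover consistency of $(\widetilde\mQ_p)_{1<p<\infty}$ (Proposition~\ref{proppropq}(1)) yields $\mL^p\cap L^q=\mL^q\cap L^p$, so this is exactly the claimed consistency on the subscale $(\mL^p)_{1<p<1+\delta}$. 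I expect the main obstacle to be part~(1), and within it the far-field estimate $\nabla u\in L^p(G\setminus B_1,\R^2)$: since $G$ is unbounded one cannot simply invoke global elliptic regularity or a single $W^{2,p}$-extension operator (the latter would presuppose the very regularity to be proved), and the Kondrat'ev weights carry no information near infinity, so the dyadic rescaling --- with the precise choice $\varepsilon=2^{-k}$ making the scaling exponents cancel --- is the crucial device.
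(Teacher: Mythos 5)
Your proposal is correct, and parts (2)--(6) follow the same formal path as the paper (commutation via Proposition~\ref{proppropq}(3), applying $(\lambda-A_p)^{-1}$ to $(\lambda-A_p)\mQ(\lambda-A_p)^{-1}f=\widetilde\mQ f$, the fixed-point characterization of $D(\mA_p)$, invariance of $\mL^p$ under the resolvent, and Remark~\ref{remthmmasa}(c)). The genuine difference is in part (1), which is indeed where all the substance lies. The paper obtains the key estimate $\|\nabla u\|_p\le C(\|\nabla^2 u\|_p+\|u\|_p)$ by quoting the Gagliardo--Nirenberg inequality on $(\eps,\infty)$-domains, which rests on the Jones/Chua extension operator for homogeneous Sobolev spaces \cite{jones81,chua92} as used in \cite[Section~5]{Maier}. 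You instead prove it by hand: near the vertex you exploit the Kondrat'ev weight ($\rho\le 1$ gives $|\nabla u|\le\rho^{-1}|\nabla u|\in L^p$), and at infinity you run a dyadic decomposition into scaled copies $A_k=2^kA_0$ of a fixed bounded Lipschitz annular piece, apply the $\eps$-interpolation inequality $\|\nabla w\|_{L^p(A_0)}\le\eps\|\nabla^2 w\|_{L^p(A_0)}+C_0\eps^{-1}\|w\|_{L^p(A_0)}$ (valid for cone-condition domains, \cite{adams}) to $w=u(2^k\cdot)$, and choose $\eps=2^{-k}$ so that the scaling exponents cancel and the constants are uniform in $k$; summing $p$-th powers gives the global bound. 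This buys you a self-contained, elementary argument that avoids the extension machinery (and incidentally makes transparent why the unboundedness of $G$, not the vertex, is the issue); the paper's route is shorter and immediately gives the inequality on all $(\eps,\infty)$-domains. The two are equally valid, and your scaling computation checks out (in $2$D the factor $2^{-2k/p}$ cancels throughout and the surviving powers of $2^k$ are exactly neutralized by $\eps=2^{-k}$; if the interpolation inequality only holds for $\eps\le\eps_0<1$ one simply replaces $\eps$ by $\eps_0 2^{-k}$ at the cost of a constant).
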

\begin{proof}
(1) Note that $D(A_p)\hook D(B_p)$ is an immediate consequence 
of the definition of $D(A_p)$. This gives $B_p=A_p$
and, by virtue of Proposition~\ref{proppropq}(2), also 
$\mQ=\widetilde\mQ$ on $D(A_p)$.
Furthermore, the Gagliardo-Nirenberg inequality and Young's inequality 
yield
\[
	\|\nabla u\|_p\le C\left(\|\nabla^2 u\|_p+\|u\|_p\right)
	\quad (u\in L^p(G,\R^2)\cap K^2_p(G,\R^2)).
\]
Note that the wedge $G$ is an $(\eps,\infty)$ domain and on domains
of this type the Gagliardo-Nirenberg inequality holds true 
\cite[Section~5]{Maier} thanks to the extension operator 
for homogeneous Sobolev spaces constructed
in \cite{jones81,chua92}. This implies
\[
	\|u\|_{W^{2,p}}
	\le C\left(\|u\|_p+\|\nabla^2 u\|_p\right)
	\le C\left(\|u\|_p+\| u\|_{K^2_p}\right).
\]
Thus $D(A_p)= D(B_p)\cap L^p(G,\R^2)$ with equivalent norms.
From this we easily obtain that $\mQ$ is also a projector on $D(A_p)$.
Since $D(A_p)$ is dense in $L^p(G,\R^2)$, $\widetilde\mQ$ extends
$\mQ$ continuously on $L^p(G,\R^2)$.
\\[1mm] 
(2) follows directly from (1) and Proposition~\ref{proppropq}(3).\\[1mm] 
(3)\ Let $\lambda\in\rho(A_p)$. From (1) and (2) we obtain
\[
	(\lambda-A_p)\mQ(\lambda-A_p)^{-1}f
	=\widetilde\mQ f \quad (f\in L^p(G,\R^2)).
	\]
	Applying $(\lambda-A_p)^{-1}$ on both sides yields (3).\\[1mm]
	(4)\ Let $u\in D(A_p)\cap\mL^p$. By (1) we obtain $u=\widetilde\mQ u
	=\mQ u$, hence $u\in\mQ D(A_p)$. Conversely, 
	(1) also yields
$\mQ D(A_p)\subset D(A_p)\cap \mL^p$. In view of (2) 
	we next conclude
	\[
		A_p u=A_p\mQ u=\widetilde \mQ A_pu\in \mL^p,
		\]
		hence $u\in D(\mA_p)$. Since the inclusion 
		$D(\mA_p)\subset D(A_p)\cap \mL^p$ is trivial, the assertion
	is proved.\\[1mm]
(5)\ Let $\lambda\in\rho(A_p)$. For $f\in\mL^p$ relations (3) 
	and (4) yield
	\[
		(\lambda -A_p)^{-1}f
		=\mQ (\lambda -A_p)^{-1}f\in D(\mA_p).
		\]
		Thus,
		\[
			(\lambda-\mA_p)(\lambda-A_p)^{-1}f=f
			\]
			which proves (5).\\[1mm]
			(6)\ follows from (5) and Remark~\ref{remthmmasa}(c).
			\end{proof}

By combining the well-known equivalence of maximal regularity and
$\cR$-sectoriality \cite[Theorem 4.2]{Weis} with Theorem~\ref{thmmasa},
Remark~\ref{scalrem}, and Lemma~\ref{lempartap} (especially assertion
(5)) we obtain
\begin{theorem}\label{thmmasapart}
Let $1<p<1+\delta$ with $\delta>0$ as in Theorem~\ref{thmmasa}. 
Then $\mA_p:D(\mA_p)\to\mL^p$ with domain
\[
	D(\mA_p) =   \left\{ u \in W^{2,p}(G, \mathbb{R}^2):\,
	\mathrm{curl}\, u=0,\, \nu \cdot u=0\,
	\text{on}\, \partial G \right\}\cap K^2_p(G,\R^2)\cap \mL^p
\]
is $\cR$-sectorial with $\phi^\cR_{\mA_p}<\pi/2$. Thus,
$\mA_p$ has maximal regularity on 
$\mL^p$. 
\end{theorem}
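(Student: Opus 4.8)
The plan is to derive Theorem~\ref{thmmasapart} directly from the results already assembled, treating it as little more than a bookkeeping exercise. First I would invoke Theorem~\ref{thmmasa}, which asserts that $A_p$ has maximal regularity on $L^p(G,\R^2)$ for $1<p<1+\delta$. By the classical equivalence \cite[Theorem~4.2]{Weis}, this is the same as saying $A_p$ is $\cR$-sectorial on $L^p(G,\R^2)$ with $\cR$-angle $\phi^\cR_{A_p}<\pi/2$; in particular $\rho(A_p)\neq\emptyset$, so Remark~\ref{scalrem} and Lemma~\ref{scallem} apply and guarantee that $A_p$ is injective with the homogeneous estimate $\|u\|_{K^2_p}\le C\|A_pu\|_p$. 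This injectivity is exactly what upgrades uniform $\cR$-boundedness of $(\lambda(\lambda+A_p)^{-1})_{\lambda>0}$ to genuine $\cR$-sectoriality in the sense used here (cf.\ \cite{Haase}).

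Next I would transfer these properties to the part $\mA_p$ in $\mL^p$. The key input is Lemma~\ref{lempartap}(5), which gives $(\lambda-\mA_p)^{-1}=(\lambda-A_p)^{-1}|_{\mL^p}$ for $\lambda\in\rho(A_p)$, together with the fact that $\mL^p=\widetilde\mQ(L^p(G,\R^2))$ is a complemented subspace with $\widetilde\mQ\in\sL(L^p(G,\R^2))$ a bounded projector (Proposition~\ref{proppropq}). Since $\cR$-boundedness of a family of operators on a Banach space passes to the restriction to a complemented subspace that is invariant under the family — and Lemma~\ref{lempartap}(3) shows $\widetilde\mQ$ commutes with the resolvents of $A_p$, so $\mL^p$ is indeed invariant — the family $(\lambda(\lambda+\mA_p)^{-1})_{\lambda>0}$ is $\cR$-bounded with the same angle bound $\phi^\cR_{\mA_p}\le\phi^\cR_{A_p}<\pi/2$. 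Injectivity of $\mA_p$ is inherited from that of $A_p$, and density of domain and range in $\mL^p$ follows from the corresponding density for $A_p$ on $L^p(G,\R^2)$ together with boundedness of $\widetilde\mQ$. Hence $\mA_p$ is $\cR$-sectorial with $\cR$-angle $<\pi/2$, and applying \cite[Theorem~4.2]{Weis} in the reverse direction yields maximal regularity on $\mL^p$.

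The only remaining point is to justify the explicit description of $D(\mA_p)$ in the statement. By definition $D(\mA_p)=\{u\in D(A_p)\cap\mL^p:A_pu\in\mL^p\}$, and Lemma~\ref{lempartap}(4) identifies this with $D(A_p)\cap\mL^p=\mQ D(A_p)$. Inserting the definition of $D(A_p)$ from \eqref{operator_ap} gives precisely the displayed set $\{u\in W^{2,p}(G,\R^2):\mathrm{curl}\,u=0,\ \nu\cdot u=0\ \text{on}\ \partial G\}\cap K^2_p(G,\R^2)\cap\mL^p$. I expect no real obstacle here; the substantive work — maximal regularity of $A_p$ on the small interval of $p$, the construction of $\widetilde\mQ$ and its commutation with resolvents, and the consistency statements — has all been done in the preceding subsections, so the main thing to be careful about is the standard but slightly technical fact that $\cR$-sectoriality restricts to a complemented invariant subspace, which I would cite from \cite{Denk-Hieber-Pruess:Maximal-Regularity} or \cite{Kunstmann} rather than reprove.
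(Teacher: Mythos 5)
Your proposal is correct and follows essentially the same route as the paper, which proves the theorem precisely by combining Theorem~\ref{thmmasa}, the Weis equivalence of maximal regularity and $\cR$-sectoriality, Remark~\ref{scalrem} for injectivity, and Lemma~\ref{lempartap} (especially assertions (4) and (5)) for the restriction of the resolvent to $\mL^p$ and the identification of the domain. The only difference is that you spell out the standard fact that $\cR$-bounds pass to invariant (complemented) subspaces, which the paper leaves implicit.
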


Our ultimate aim in this subsection is to show that 
Theorem~\ref{thmmasapart}, in particular the optimal Sobolev 
regularity, is available on the full range $1<p<\infty$.
Note that this is not true for $A_p:D(A_p)\subset L^p(G,\R^2)\to 
L^p(G,\R^2)$ with $D(A_p)$ given in (\ref{operator_ap}) 
as the next result shows.
\begin{theorem}\label{thmmasneg}
Let $1<p<\infty$ and $\theta_0\in(0,\pi)$ such that
condition (\ref{spec_cond}) (with $\gamma=0$) is not satisfied.
Then $\rho(A_p)=\emptyset$. In other words, in this situation
for every $\lambda\in\C$ there is an $f\in L^p(G,\R^2)$ 
such that there is no solution $u$ of
\begin{equation}
	 	\left.
	 	\begin{array}{r@{\ =\ }lll}
	 		\lambda u- \Delta u &  f &
			\text{in} & G, \\
	 		\text{curl}\, u=0, \ u \cdot \nu &0 &
			\text{on} & \partial G 
	 	\end{array}
	 	\right\}
	 \label{lapresequ}
\end{equation}
satisfying $u\in K^2_{p}(G,\R^2)$. More precisely, if $p \neq 2$ then $\partial^\alpha u \notin L^p(G, \bR^2)$ for some $\alpha$ with $|\alpha| = 2$,
while for $p = 2$ we have $\rho^{|\alpha| - 2} \partial^\alpha u \not\in L^2(G, \bR^2)$ for some $\alpha$ with $|\alpha| < 2$.
\end{theorem}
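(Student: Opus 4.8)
The plan is to argue by contradiction: if $\lambda\in\rho(A_p)$ for some $\lambda\in\C$, then $\lambda-A_p$ is an isomorphism from $D(A_p)$ onto $L^p(G,\R^2)$, and we will manufacture a right-hand side $f$ in the range for which the corresponding solution $u$ cannot satisfy $u\in K^2_p(G,\R^2)$, contradicting $D(A_p)\subset K^2_p(G,\R^2)$. The key device is the equivalence already established: by Lemma~\ref{lemma_result}, problem \eqref{lapequ} on $G$ is isomorphic (after the polar/Euler transform and the $p$-dependent rescaling) to the sum-operator problem $T_p=T_{p,x}+T_{p,\theta}$ on $L^p(\Omega,\R^2)$, and by Theorem~\ref{theorem_haupt} this sum is invertible \emph{if and only if} $-\beta_p^2\notin\sigma(T_{p,\theta})$, i.e.\ precisely condition \eqref{spec_cond} with $\gamma=0$. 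So failure of \eqref{spec_cond} means there is an eigenvalue $\lambda_{i_0}=-\beta_p^2$ of $T_{p,\theta}$, and the symbol $\mu+r_p(i\xi)$ of $T_p$ vanishes exactly at $(\mu,\xi)=(-\beta_p^2,0)$.

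The main step is to transfer this non-invertibility of the \emph{elliptic} operator $B_p$ to non-invertibility of every \emph{resolvent} operator $\lambda-A_p$. First I would observe that the resolvent problem \eqref{lapresequ} with right-hand side $f$ has a solution $u\in K^2_p(G,\R^2)$ if and only if $u$ solves the elliptic problem \eqref{lapequ} with right-hand side $\widetilde f:=f-\lambda u\in L^p(G,\R^2)$; hence any solution of \eqref{lapresequ} in $K^2_p$ automatically lies in $D(B_p)$. Thus if $\rho(A_p)\neq\emptyset$ then $A_p\subset B_p$ and, since by Lemma~\ref{scallem} and Remark~\ref{scalrem} $A_p$ is injective and $\lambda-A_p$ is onto, we would get that $B_p:D(B_p)\to L^p(G,\R^2)$ is surjective. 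But $B_p$ is (via Lemma~\ref{lemma_result}) conjugate to $T_p$, which is \emph{not} surjective because $(\mu+r_p(i\cdot))^{-1}$ fails to be an $L^p(\R,L^p(I,\R^2))$-multiplier in the component spanned by the eigenfunction $e_{i_0}$: it is unbounded near $\xi=0$. Concretely, I would project onto $\la e_{i_0}\ra$, reducing to the scalar one-dimensional operator $r_p(\partial_x)-\beta_p^2=(\partial_x+\beta_p)^2-\beta_p^2=\partial_x^2+2\beta_p\partial_x$ on $L^p(\R)$, which is \emph{not} surjective (its symbol $-\xi^2+2i\beta_p\xi$ vanishes at $\xi=0$, so e.g.\ no $L^p$-function with nonzero integral against a suitable bounded test function lies in its range for $p$ where the inverse symbol is not a multiplier). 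Pulling such an $f$ back through $\Theta_*^p$ gives the desired obstruction, contradicting surjectivity of $\lambda-A_p$. Hence $\rho(A_p)=\emptyset$.

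It remains to extract the sharper statement about which norms blow up, which is exactly Remark~\ref{remconsnotw2}(b): for $p\neq2$, Hardy's inequality applied twice on the $x$-integral controls the lower-order terms $\rho^{|\alpha|-2}\partial^\alpha u$, $|\alpha|\le1$, by the second-order terms, so if all second-order derivatives were in $L^p(G,\R^2)$ then all of \eqref{estoptellreg} would hold and $u\in D(B_p)$ would solve the problem, contradicting the non-surjectivity just proved; therefore $\partial^\alpha u\notin L^p(G,\R^2)$ for some $|\alpha|=2$. For $p=2$ Hardy's inequality is unavailable (the relevant exponent $2-|\alpha|-2/p$ vanishes when $|\alpha|=1$, $p=2$), so the obstruction can instead sit in a lower-order term: $\rho^{|\alpha|-2}\partial^\alpha u\notin L^2(G,\R^2)$ for some $|\alpha|<2$. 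I expect the main obstacle to be the bookkeeping in the first transfer step — carefully checking that a solution of the resolvent problem in $K^2_p$ genuinely forces membership in $D(B_p)$ (so that non-surjectivity of $B_p$ really does obstruct $\rho(A_p)$), and that the failure of the multiplier property is not accidentally repaired by the $\lambda$-shift; once the symbol computation shows the degeneracy is exactly at $\xi=0$ and independent of $\lambda$, the rest is routine.
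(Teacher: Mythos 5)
Your overall strategy (argue by contradiction, reduce the resolvent problem to the elliptic problem, and invoke the characterization of Theorem~\ref{main1}) is the same as the paper's, and your treatment of the refined statement via Remark~\ref{remconsnotw2}(b) matches the paper exactly. However, there is a genuine gap at the central step. You claim that injectivity of $A_p$ together with surjectivity of $\lambda-A_p$ forces $B_p$ to be surjective. What actually follows from $A_p\subset B_p$ and $\lambda\in\rho(A_p)$ is only $R(B_p)\supseteq R(A_p)$, since a solution $u$ of \eqref{lapresequ} solves the elliptic problem with right-hand side $\widetilde f=f-\lambda u=A_pu\in R(A_p)$ --- and $R(A_p)=L^p(G,\R^2)$ would require $0\in\rho(A_p)$, which is not given (an injective operator with $\lambda-A_p$ onto need not be onto itself; think of multiplication by $x$ on $L^2(0,1)$). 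The paper closes this hole in two moves that your proposal does not make: first, from a single $\mu\in\rho(A_p)$ the scaling invariance of the wedge (the argument of Lemma~\ref{scallem}) yields uniform boundedness of $(\lambda(\lambda+A_p/\mu)^{-1})_{\lambda>0}$, hence sectoriality of $A_p/\mu$ and in particular \emph{density} of $R(A_p)$; second, for arbitrary $f$ one takes $u_k\in D(A_p)$ with $A_pu_k\to f$ and uses the homogeneous a priori estimate $\|u\|_{K^2_p}\le C\|A_pu\|_p$ of Lemma~\ref{scallem} to see that $(u_k)$ is Cauchy in $K^2_p(G,\R^2)$, so its limit is a $K^2_p$-solution of \eqref{lapequ} with right-hand side $f$, contradicting Theorem~\ref{main1}. (The case $\mu=0$ is dispatched separately as an immediate contradiction with Theorem~\ref{main1}.)

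Your fallback plan of exhibiting an explicit bad right-hand side by projecting onto $\langle e_{i_0}\rangle$ and noting that $\partial_x^2+2\beta_p\partial_x$ is not surjective on $L^p(\R)$ only obstructs the \emph{elliptic} operator $T_p$ (equivalently $B_p$); it does not directly produce an $f$ for which \eqref{lapresequ} is unsolvable, because after the Euler transform the zero-order term $\lambda u$ acquires the factor $e^{2x}$ and the resolvent problem is no longer translation invariant on the layer, so no multiplier argument applies to it. You flag this worry yourself at the end but do not resolve it; the density-plus-a-priori-estimate route above is precisely what makes the symbol obstruction for the elliptic problem relevant to the resolvent problem.
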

\begin{proof}
Suppose there exists a complex number $\mu\in\rho(A_p)$. 
We can assume $\mu\neq 0$,
since otherwise this would immediately contradict 
Theorem~\ref{main1}.

By the scaling argument used in the proof of
Lemma~\ref{scallem} it easily follows that
$((\lambda-A_p/\mu)^{-1})_{\lambda>0}$ is uniformly bounded.
Thanks to Remark~\ref{scalrem} then $A_p/\mu$ is sectorial,
see \cite{Haase}, in particular it has dense range. For 
$f\in L^p(G,\R^2)$ we hence find $(u_k)\subset D(A_p)$ such that
$A_pu_k\to f$ in $L^p(G,\R^2)$. Due to Lemma~\ref{scallem} $(u_k)$
is a Cauchy sequence in $K_p^2(G,\R^2)$ and its limit $u=\lim u_k$ 
satisfies equation (\ref{lapequ}). The fact that
$u\in K_p^2(G,\R^2)$ then contradicts Theorem~\ref{main1}. 
Thus $\rho(A_p)$ must be empty.
The additional statement follows from Remark~\ref{remconsnotw2}(b).
\end{proof}

Next, we show that the resolvent of $\mA_p$ in $\mL^p$ 
is consistent with its dual resolvent. 
For this purpose we first identify $(\mL^p)'$. This, in turn,
is connected to the identification of $\mP_3'$ and $\mQ'$. By this fact,
just within the following lemma, we write $\mP_{3,p}$ and $\mQ_p$ again.
\begin{lemma}\label{dualqmlp}
Let $1<p<\infty$, $\beta_p=2-2/p$, and $1/p+1/p'=1$. Let  
$\widetilde{\Theta}_*^p:L^p(\Omega,\R^2)\to L^p(G,\R^2)$ be defined as
in Subsection~\ref{first} with inverse $\widetilde{\Theta}^*_p$ 
and the projectors $\mP_{3,p}$ and $\widetilde\mQ_p$
be defined as in (\ref{defpmproj}) (and the subsequent lines) 
and (\ref{defprojq}) respectively. Then we have
\begin{enumerate}
\item $(\widetilde{\Theta}_*^p)'=\widetilde{\Theta}_{p'}^*$ and 
$(\widetilde{\Theta}^*_p)'=\widetilde{\Theta}^{p'}_*$;
in particular $\widetilde{\Theta}_*^p$ is an isometric isomorphism;
\item $(\mP_{3,p})'=\mP_{3,p'}$;
\item $(\widetilde\mQ_{p})'=\widetilde\mQ_{p'}$;
\item $(\mL^p)'=\mL^{p'}$ with respect to $(u,\,v)=\int_Guv dx$
in the sense of a Riesz isomorphism.
\end{enumerate}
\end{lemma}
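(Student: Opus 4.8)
The plan is to compute the four adjoints in the order listed, deriving each from the previous ones, and to obtain (4) from the standard duality for ranges of bounded projectors. For (1) I would start from the explicit representations of the transforms: combining \eqref{pullback} and \eqref{substitution} gives $\widetilde{\Theta}^*_p f=M_{2-\beta_p}\mathcal O^{-1}\Psi f$ and, for the inverse, $\widetilde{\Theta}_*^p g=\Psi^{-1}\mathcal O\,M_{\beta_p-2}g$, where $\beta_p=2-2/p$ (the case $\gamma=0$). Given $f\in L^{p'}(G,\R^2)$ and $g\in L^p(\Omega,\R^2)$, I would evaluate $(f,\widetilde{\Theta}_*^p g)_G$ by changing variables back to $\Omega$ via $\psi=\psi_P\circ\psi_E$; the Jacobian of this composite change of variables is $e^{2x}\,dx\,d\theta$, and since $\mathcal O=\mathcal O(\theta)$ is orthogonal ($\mathcal O^\top=\mathcal O^{-1}$) the integral becomes $(M_{\beta_p}\mathcal O^{-1}\Psi f,\,g)_\Omega$. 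Now the arithmetic identity $\beta_p+\beta_{p'}=(2-\tfrac2p)+(2-\tfrac2{p'})=2$, i.e.\ $\beta_p=2-\beta_{p'}$, shows $M_{\beta_p}=M_{2-\beta_{p'}}$, so the last expression equals $(\widetilde{\Theta}^*_{p'}f,\,g)_\Omega$; this proves $(\widetilde{\Theta}_*^p)'=\widetilde{\Theta}^*_{p'}$, and taking inverses (the adjoint of an inverse is the inverse of the adjoint) gives $(\widetilde{\Theta}^*_p)'=\widetilde{\Theta}^{p'}_*$. The same change of variables applied to $\|\widetilde{\Theta}_*^p g\|_{L^p(G)}^p$ shows that the weight exponents cancel precisely because $\beta_p=2-2/p$, so $\widetilde{\Theta}_*^p$ is isometric (this is already implicit in Lemma~\ref{lemma_result} for $\gamma=0$).

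For (2), since $P^c_{3,p}u=\sum_{i=1}^3(u,e_i)e_i$ is a finite rank operator built from the real-valued eigenfunctions $e_1,e_2,e_3$, the symmetry of the real pairing $\int_I(\cdot)(\cdot)\,d\theta$ gives at once $(P^c_{3,p})'=P^c_{3,p'}$, hence $(\mathbb P_{3,p})'=(\mathrm{id}-P^c_{3,p})'=\mathrm{id}-P^c_{3,p'}=\mathbb P_{3,p'}$ on $L^{p'}(I,\R^2)$ and, since the extension to $L^{p'}(\Omega,\R^2)=L^{p'}(\R,L^{p'}(I,\R^2))$ is pointwise in $x$, also for these extensions. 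Then (3) is immediate from (1), (2) and the composition rule $(ABC)'=C'B'A'$:
\[
	(\widetilde\mQ_p)'=\bigl(\widetilde{\Theta}_*^p\,\mathbb P_{3,p}\,\widetilde{\Theta}^*_p\bigr)'=(\widetilde{\Theta}^*_p)'(\mathbb P_{3,p})'(\widetilde{\Theta}_*^p)'=\widetilde{\Theta}_*^{p'}\,\mathbb P_{3,p'}\,\widetilde{\Theta}^*_{p'}=\widetilde\mQ_{p'}.
\]

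For (4), I would use that $\widetilde\mQ_p$ is a bounded projector on $L^p(G,\R^2)$ with range $\mL^p$, so by (3) its adjoint $\widetilde\mQ_{p'}$ is a bounded projector on $L^{p'}(G,\R^2)$ with range $\mL^{p'}$, and $L^{p'}(G,\R^2)=\mL^{p'}\oplus(\mathrm{id}-\widetilde\mQ_{p'})L^{p'}(G,\R^2)$. The complement is exactly the annihilator of $\mL^p$ for $(\cdot,\cdot)_G$: for $g$ in it and $v\in\mL^p$ one has $(v,g)_G=(\widetilde\mQ_p v,g)_G=(v,\widetilde\mQ_{p'}g)_G=0$, and conversely. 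Hence the map $J:\mL^{p'}\to(\mL^p)'$, $Jw:=(\cdot,w)_G|_{\mL^p}$, is injective (if $(v,w)_G=0$ for all $v\in\mL^p$, then, decomposing an arbitrary $v\in L^p(G,\R^2)$ through $\widetilde\mQ_p$, $(v,w)_G=0$ for all such $v$, so $w=0$) and surjective (given $\phi\in(\mL^p)'$, extend it to $\Phi:=\phi\circ\widetilde\mQ_p\in(L^p(G,\R^2))'$, represent $\Phi$ by some $g\in L^{p'}(G,\R^2)$, and set $w:=\widetilde\mQ_{p'}g\in\mL^{p'}$; then $(v,w)_G=(\widetilde\mQ_p v,g)_G=\Phi(v)=\phi(v)$ for $v\in\mL^p$). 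Boundedness of $J$ is trivial, and $\|w\|_{p'}=\|\widetilde\mQ_{p'}g\|_{p'}\le\|\widetilde\mQ_{p'}\|\,\|g\|_{p'}=\|\widetilde\mQ_{p'}\|\,\|\Phi\|\le\|\widetilde\mQ_{p'}\|\,\|\widetilde\mQ_p\|\,\|\phi\|$ bounds $J^{-1}$; thus $J$ is a topological isomorphism, which is the asserted identification $(\mL^p)'=\mL^{p'}$ via the pairing $(u,v)=\int_G uv\,dx$.

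I expect the only genuine work to be the bookkeeping in (1): tracking the Jacobian of the composite polar/Euler change of variables, the transpose of the rotation matrix $\mathcal O(\theta)$ acting on the vector components, and above all the identity $\beta_p+\beta_{p'}=2$, which is precisely what forces the adjoint of $\widetilde{\Theta}_*^p$ to land on $\widetilde{\Theta}^*_{p'}$ rather than on a transform with a different weight. Steps (2)--(4) are soft; in (4) the one point to verify, non-degeneracy of $(\cdot,\cdot)_G$ on $\mL^p$, comes for free from $\widetilde\mQ_p'=\widetilde\mQ_{p'}$.
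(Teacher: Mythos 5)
Your proposal is correct and follows essentially the same route as the paper: the adjoint of $\widetilde{\Theta}_*^p$ is computed by the change of variables with Jacobian $e^{2x}$, the orthogonality of $\mathcal O$, and the identity $\beta_p=2-\beta_{p'}$, after which (2)--(4) follow from the symmetry of the finite-rank projection, the composition rule for adjoints, and the standard duality for ranges of bounded projectors. The only difference is that you spell out the isometry and the projector-range duality in (4) explicitly, which the paper leaves as standard facts.
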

\begin{proof}
(1)\ Recall that by (\ref{pullback})
and (\ref{substitution}) we have 
$\widetilde{\Theta}_*^p u=\Psi^{-1}\mathcal O M_{\beta_p-2} u$
with $\Psi$, $\mathcal O$, $M_{\beta_p-2}$ 
as defined in Subsection~\ref{first}. Thanks to
\[
	\beta_p=2-\frac{2}{p}=-\beta_{p'}+2
\]
we can calculate 
\begin{align*}
	\left(\widetilde{\Theta}_*^{p}u,\, v\right)_G
 	&= \int_G v(y)(\mathcal O M_{\beta_p-2}u)(\psi^{-1}(y)) dy\\
	&= \int_\Omega v(\psi(x,\theta))
		(\mathcal O M_{\beta_p-2}u)(x,\theta) e^{2x}dxd\theta\\
	&= \int_\Omega (M_{-\beta_{p'}+2}\mathcal O^{-1}\Psi v)(x,\theta)
			   u(x,\theta) dxd\theta\\
	&= \left(u,\, \widetilde{\Theta}^*_{p'}v\right)_\Omega
	\quad \left(u\in L^p(\Omega,\R^2),\ v\in L^{p'}(G,\R^2)\right).
\end{align*}
Relation $(\widetilde{\Theta}^*_p)'=\widetilde{\Theta}^{p'}_*$ then follows
since $\widetilde{\Theta}^*_p=(\widetilde{\Theta}_*^p)^{-1}$.

Relation (2) follows immediately by the definition of $\mP_{3,p}$ and 
(3) is a consequence of (1) and (2).

(4)\ By the fact that $\mL^p=\widetilde\mQ_p L^p(G,\R^2)$ this follows
from the symmetry of $\widetilde\mQ_p$ proved in (3)
and since $(L^p(G,\R^2))'=L^{p'}(G,\R^2)$ with 
respect to $(\cdot,\ \cdot)$.
\end{proof}

Now, let 
\[
	\mA_p':D(\mA_p')\subset\mL^{p'}\to \mL^{p'}
\]
be the Banach space dual operator to $\mA_p$ in $\mL^p$ for $1<p<1+\delta$.
By permanence properties and Theorem~\ref{thmmasapart} 
it follows that also $\mA_p'$ is $\cR$-sectorial with 
$\phi^\cR_{\mA_p'}=\phi^\cR_{\mA_p}<\pi/2$.
At this point, however, we do not
know how $D(\mA_p')$ looks like. On our way to characterize $D(\mA_p')$
we next show consistency of
$(\lambda-\mA_p)^{-1}$ and $(\lambda-\mA_p')^{-1}$ on $\mL^p\cap\mL^{p'}$. 
\begin{proposition}\label{konsistentap}
Let $1 < p < 1+ \delta$ with $\delta>0$ as in Theorem~\ref{thmmasapart}
and $1/p + 1/ p'=1$. Then 
$$ (\lambda -\mA_p)^{-1} f = (\lambda -\mA_p')^{-1} f \quad (f \in 
		\mL^p\cap\mL^{p'},\ \lambda\in\rho(\mA_p)\cap\R). $$
\end{proposition}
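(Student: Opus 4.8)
The plan is to show that the bounded operator $R_p:=(\lambda-\mA_p)^{-1}\in\sL(\mL^p)$ is \emph{symmetric} with respect to the bilinear pairing $(\cdot,\cdot)_G$ on the range of the resolvent, i.e.\ that $(R_py,f)_G=(y,R_pf)_G$ for all $y\in\mL^p$ and all $f\in\mL^p\cap\mL^{p'}$, and then to deduce the statement from the identification $(\mL^p)'=\mL^{p'}$ provided by Lemma~\ref{dualqmlp}(4). As a first ingredient I would record the integrability of resolvent images: for $f\in\mL^p\cap\mL^{p'}$ the function $u:=R_pf$ lies in $D(\mA_p)$, hence by Lemma~\ref{lempartap}(1),(4) in $D(A_p)\subset W^{2,p}(G,\R^2)\cap K_p^2(G,\R^2)$, with $\mathrm{curl}\,u=0$, $\nu\cdot u=0$ on $\partial G$ and $\mA_p u=-\Delta u$. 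Since $1<p<1+\delta$ forces $2-2/p<1$, i.e.\ $p<2$ (Remark~\ref{remthmmasa}(b)), the two-dimensional Sobolev embeddings on the $(\eps,\infty)$-domain $G$ give $u\in L^\infty(G,\R^2)$ and $\nabla u\in L^2(G,\R^{2\times2})$; combined with $u\in L^p(G,\R^2)$ this yields $u\in L^{p'}(G,\R^2)\cap L^2(G,\R^2)$. The same holds for $v:=R_py$ with an arbitrary $y\in\mL^p$ (the regularity comes from the resolvent, not from extra integrability of the data).

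Next I would prove the symmetry $(\mA_p w,z)_G=(w,\mA_p z)_G$ for $w,z\in D(\mA_p)$. Using the two-dimensional identity $-\Delta=-\nabla\,\mathrm{div}-\nabla^\perp\mathrm{curl}$ (with $\nabla^\perp=(-\partial_2,\partial_1)$) and integrating by parts over $G\cap B_R$, the boundary contributions over $\partial G\cap B_R$ vanish because $\nu\cdot z=0$ and $\mathrm{curl}\,w=0$ there, while those over $G\cap\partial B_R$ vanish along a suitable sequence $R_n\to\infty$ since $\mathrm{div}\,w,\mathrm{curl}\,w,z\in L^2(G)$, so that $R\mapsto\int_{G\cap\partial B_R}(|\mathrm{div}\,w|+|\mathrm{curl}\,w|)\,|z|\,ds$ is integrable on $(0,\infty)$; near the vertex $G$ is Lipschitz, so no further excision is needed. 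This leaves $(-\Delta w,z)_G=-(\mathrm{div}\,w,\mathrm{div}\,z)_G-(\mathrm{curl}\,w,\mathrm{curl}\,z)_G$, which is symmetric in $w$ and $z$.

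Now fix $y\in\mL^p$ and put $v:=R_py$. All pairings below converge by the first step ($u,v\in L^p\cap L^{p'}$, $f\in\mL^{p'}$, $y\in\mL^p$), and using $(\lambda-\mA_p)u=f$, $(\lambda-\mA_p)v=y$ together with the symmetry of $\mA_p$,
\begin{align*}
	(R_py,f)_G
	&=(v,\lambda u-\mA_p u)_G
	=\lambda(v,u)_G-(v,\mA_p u)_G\\
	&=\lambda(v,u)_G-(\mA_p v,u)_G
	=(\lambda v-\mA_p v,u)_G
	=(y,R_pf)_G.
\end{align*}
On the other hand, by the definition of the Banach dual and the Riesz identification $(\mL^p)'=\mL^{p'}$ of Lemma~\ref{dualqmlp}(4) one has $(R_py,f)_G=(y,(\lambda-\mA_p')^{-1}f)_G$ for all $y\in\mL^p$; comparing with the displayed identity gives $(y,\,(\lambda-\mA_p')^{-1}f-R_pf)_G=0$ for all $y\in\mL^p$. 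Finally, since $u=R_pf\in L^p\cap L^{p'}$ and $u=\widetilde\mQ u$, the consistency of $(\widetilde\mQ_p)_{1<p<\infty}$ on $(L^p(G,\R^2))_{1<p<\infty}$ (Proposition~\ref{proppropq}(1)) yields $\widetilde\mQ_{p'}u=\widetilde\mQ_pu=u$, hence $u\in\mL^{p'}$; thus $h:=(\lambda-\mA_p')^{-1}f-u\in\mL^{p'}$, and for every $z\in L^p(G,\R^2)$ we obtain, using $(\widetilde\mQ_p)'=\widetilde\mQ_{p'}$ (Lemma~\ref{dualqmlp}(3)), $(z,h)_G=(z,\widetilde\mQ_{p'}h)_G=(\widetilde\mQ_pz,h)_G=0$, so $h=0$, which is the assertion.

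The main obstacle is the symmetry step: it requires a careful Green's formula on the \emph{unbounded} wedge, where one must justify that the boundary integrals at infinity — and at the vertex — genuinely vanish, and this is precisely why the $L^2$-integrability of $u$ and $\nabla u$ secured in the first step (via $p<2$) is indispensable rather than a mere convenience.
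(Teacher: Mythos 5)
Your proof is correct and follows essentially the same route as the paper: the core of both arguments is the symmetry $(\mA_p w,\,z)_G=(w,\,\mA_p z)_G$ on $D(\mA_p)$, obtained from $\Delta=\nabla\,\mathrm{div}-\mathrm{curl}'\,\mathrm{curl}$, the perfect-slip boundary conditions and Green's formula, combined with the Sobolev embedding $D(\mA_p)\hookrightarrow L^{p'}(G,\R^2)$ and the identification $(\mL^p)'=\mL^{p'}$ from Lemma~\ref{dualqmlp}. The differences are organizational -- the paper channels the duality bookkeeping through the abstract Lemma~\ref{conslem} while you carry it out by hand at the resolvent level, and you are more explicit about the boundary terms at infinity; note only the harmless sign slip in your Green's identity, which should read $(-\Delta w,\,z)_G=(\mathrm{div}\,w,\,\mathrm{div}\,z)_G+(\mathrm{curl}\,w,\,\mathrm{curl}\,z)_G$.
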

\begin{proof}
Let $\lambda\in\rho(\mA_p)\cap\R$. We intent to apply Lemma~\ref{conslem}.
Setting $T=\lambda-\mA_p$, we first have to
verify that there exists an embedding $J:D(\mA_p)\to (\mL^p)'$
with dense range.
Observe that, since $D(\mA_p)\hook W^{2,p}(G,\R^2)$ and $G\subset\R^2$, 
the Sobolev embedding yields 
\[
	D(\mA_p)\hookd L^{p'}(G,\R^2)\cap \mL^p=\mL^{p'}.
\]
Thus $J$ can be chosen essentially as the Riesz isomorphism given
in Lemma~\ref{dualqmlp}(4). However, since we identify $(\mL^p)'$
with $\mL^{p'}$ anyway and $T^\sharp$ with $(\lambda-\mA_p)^\sharp$ 
on $\mL^{p'}$, that is, with its dual induced by 
the Riesz isomorphism,
we omit $J$ (and hence also $\widetilde J$) in what follows.

By virtue of Lemma~\ref{conslem} and (\ref{dualcons})
it then remains to prove that
\[
	\lambda-\mA_p\subset (\lambda-\mA_{p})^\sharp,
\]
where $(\lambda-\mA_{p})^\sharp:\mL^{p'}\to D(\mA_{p})'$ denotes the dual
operator of $\lambda-\mA_{p}$ regarded as a bounded operator from
$D(\mA_{p})$ to $\mL^{p}$, see Appendix~\ref{appa}.
To this end, pick $u,v\in D(\mA_p)$. Observe that by the fact that 
$D(\mA_p)\hook \mL^p\cap\mL^{p'}$ all duality pairings appearing below 
are well-defined. Also note that 
$$ \Delta u = \nabla \text{div} \ u - \text{curl}' \, \text{curl} \ u, $$
where $\text{curl}'\varphi = \left( \partial_{x_2}, - \partial_{x_1}
\right)^T \varphi$ for a scalar function $\varphi$. 
Employing the Gau{\ss} theorem and the boundary conditions for $u$ and
$v$ we calculate
\begin{equation*}
\left( \nabla \, \text{div} \, u, v \right)
	= \int_{\partial G} \nu \cdot v\ \text{div}\, u\,d \sigma 
- \left(\text{div}\,u,\, \text{div}\,v \right) 
= \left(u,\, \nabla \, \text{div} \, v\right) 
\end{equation*} 
as well as
\begin{equation*}
\begin{aligned}
\left(\text{curl}' \, \text{curl} \, u, v \right)
&= -\int_{\partial G}\left( \left(v^2, -v^1  \right)^T \cdot \nu			\right) \text{curl}\,u\, d \sigma + \left(\text{curl}\,u,\,
				\text{curl}\,v\right) \\
	&= \left(u,\, \text{curl}' \, \text{curl} \, v \right).
\end{aligned}
\end{equation*}
This yields
\begin{equation*}
\begin{aligned}
\langle T^\sharp u,\, v \rangle_{D(\mA_p)',D(\mA_p)} 
&= \left( u,\, (\lambda+ \Delta) v \right)
	= \left((\lambda+ \Delta)  u,\,v\right)\\ 
&= (Tu,\,v)
=\langle T u,\, v \rangle_{D(\mA_p)',D(\mA_p)}
\end{aligned}
\end{equation*} 
which proves the claim. 
\end{proof}

Now we can characterize $D(\mA_p')$.
\begin{theorem}\label{korollar_ergebnis_paper}
Let $1<p< \infty$ and $1/p+1/p'=1$. Then we have $\mA_p'=\mA_{p'}$,
i.e., in particular $D(\mA_p')=D(\mA_{p'})$ with $D(\mA_{p'})$ as
characterized by (\ref{operator_ap}) and 
Lemma~\ref{lempartap}(4). Furthermore, for $\lambda\in \rho(\mA_p)$
the family $\left((\lambda-\mA_p)^{-1}\right)_{1<p<\infty}$
is consistent on $\left(\mL^p\right)_{1<p<\infty}$.
\end{theorem}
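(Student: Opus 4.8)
The plan is to reduce both assertions to the stronger statement that $\mA_p$ is $\cR$-sectorial on $\mL^p$ with $\cR$-angle $<\pi/2$ for \emph{every} $1<p<\infty$; this is given on $(1,1+\delta)$ by Theorem~\ref{thmmasapart}, and I would flip it to the dual interval $((1+\delta)/\delta,\infty)$ by passing to adjoints and then close the remaining gap by complex interpolation. A single local fact is used repeatedly: if $\lambda<0$, $q\in(1,1+\delta)$ and $g=\widetilde{\Theta}_*^q(\mP_3 h)$ with $h\in C^\infty_c(\R\times I,\R^2)$, then $g$ lies in $\mL^s$ for all $1<s<\infty$ and is supported in an annulus $\{0<r_1<\rho<r_2\}$; the function $v:=(\lambda-\mA_q)^{-1}g$ solves $(\lambda+\Delta)v=g$, decays exponentially as $\rho\to\infty$ by the Agmon estimate for $\Delta v=|\lambda|v$, and — carrying none of the angular modes removed in the definition of $\mL^p$, all of whose Kondrat'ev exponents exceed $2$ since the retained eigenvalues satisfy $\lambda_i<-4$ — has $\rho^{|\alpha|-2}\partial^\alpha v$ bounded near the vertex; hence $v\in D(B_s)\cap\mL^s\subset D(\mB_s)$ and solves $(\lambda+\Delta)v=g$ for every $1<s<\infty$, and the $g$ of this form are dense in $\mL^s$ for each $s$.

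\textbf{Duality.} Fix $q\in(1,1+\delta)$. Since $\cR$-sectoriality with a prescribed angle is inherited by the Banach space adjoint and $(\mL^q)'=\mL^{q'}$ via the Riesz pairing of Lemma~\ref{dualqmlp}(4), the adjoint $\mA_q'$ is $\cR$-sectorial on $\mL^{q'}$ with $\phi^\cR_{\mA_q'}=\phi^\cR_{\mA_q}<\pi/2$, $\rho(\mA_q')=\rho(\mA_q)$, and by Proposition~\ref{konsistentap} its resolvent agrees with $(\lambda-\mA_q)^{-1}$ on $\mL^q\cap\mL^{q'}$. To see that $\mA_q'=\mA_{q'}$, note first that $\mA_{q'}\subset\mA_q'$ by the integration by parts from the proof of Proposition~\ref{konsistentap} (which uses only the perfect slip conditions and the membership of the two domains in the weighted/projected spaces). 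Conversely, for $\lambda<0$ and $g$ as in the local fact one has $(\lambda-\mA_q')^{-1}g=(\lambda-\mA_q)^{-1}g\in D(B_{q'})\cap\mL^{q'}\subset D(\mB_{q'})$, so the a priori bound $\|\,\cdot\,\|_{K^2_{q'}}\le C\|\mB_{q'}\,\cdot\,\|_{\mL^{q'}}$ of Proposition~\ref{proppropq}(4), together with density and the boundedness of $(\lambda-\mA_q')^{-1}$ on $\mL^{q'}$, yields $(\lambda-\mA_q')^{-1}\mL^{q'}\subset D(\mA_{q'})$; hence $\lambda-\mA_{q'}$ is bijective and $\mA_{q'}=\mA_q'$.

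\textbf{Interpolation.} Given $p\in(1,\infty)$, choose $q\in(1,1+\delta)$ with $q<\min\{p,p'\}$, so that $1/p=(1-\vartheta)/q+\vartheta/q'$ for some $\vartheta\in(0,1)$. Since $(\widetilde{\mQ})_{1<p<\infty}$ is a consistent family of bounded projections (Proposition~\ref{proppropq}(1)), $[\mL^q,\mL^{q'}]_\vartheta=\widetilde{\mQ}\,[L^q(G,\R^2),L^{q'}(G,\R^2)]_\vartheta=\mL^p$, and interpolating the $\cR$-bounded resolvent families shows that the interpolated operator $C_\vartheta$ on $\mL^p$ — whose resolvent is the common extension of $(\lambda-\mA_q)^{-1}$ and $(\lambda-\mA_{q'})^{-1}$ from $\mL^q\cap\mL^{q'}$ — is $\cR$-sectorial with $\cR$-angle $<\pi/2$. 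To identify $C_\vartheta=\mA_p$, fix $\lambda<0$, take $f\in\mL^p$ and approximate it in $\mL^p$ by functions $f_k$ as in the local fact; then $u_k:=(\lambda-C_\vartheta)^{-1}f_k=(\lambda-\mA_q)^{-1}f_k\in D(B_p)\cap\mL^p\subset D(\mB_p)$ solves $(\lambda+\Delta)u_k=f_k$, whence
\[
  \|u_j-u_k\|_{K^2_p}\le C\|\Delta(u_j-u_k)\|_{\mL^p}
  \le C\bigl(\|f_j-f_k\|_{\mL^p}+|\lambda|\,\|u_j-u_k\|_{\mL^p}\bigr)\longrightarrow 0,
\]
so $u_k\to u:=(\lambda-C_\vartheta)^{-1}f$ in $K^2_p$, and passing to the limit, using Lemma~\ref{lempartap}(1), gives $u\in D(\mA_p)$ with $(\lambda-\mA_p)u=f$. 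Thus $\lambda-\mA_p$ is surjective with bounded right inverse $(\lambda-C_\vartheta)^{-1}$ and $C_\vartheta\subset\mA_p$; it is injective because any $w\in\ker(\lambda-\mA_p)$ decays exponentially at infinity and has vertex asymptotics of order $>2$ (being in $\mL^p$), so $w\in D(\mA_q)$ and $w=0$. Hence $\lambda\in\rho(\mA_p)$ and $\mA_p=C_\vartheta$ is $\cR$-sectorial with $\cR$-angle $<\pi/2$.

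\textbf{Conclusion and main obstacle.} With $\mA_p$ $\cR$-sectorial for all $1<p<\infty$, applying the Duality step to each $p$ gives $\mA_p'=\mA_{p'}$, hence $D(\mA_p')=D(\mA_{p'})$. For the consistency of $((\lambda-\mA_p)^{-1})_{1<p<\infty}$, the construction identifies $(\lambda-\mA_p)^{-1}$ with the interpolant of the small- and dual-exponent resolvents at every intermediate $p$; consistency on $(1,1+\delta)$ is Lemma~\ref{lempartap}(6), Proposition~\ref{konsistentap} matches the small and the dual ranges, and the resolvent identity propagates consistency from $\lambda<0$ to all of $\rho(\mA_p)\cap\rho(\mA_q)$. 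The delicate point is the identification $C_\vartheta=\mA_p$: turning the abstract interpolant into the genuine $L^p$-realization of the Laplacian forces one to promote $\mL^p$-convergence of the approximants to $K^2_p$-convergence, which is possible only via the detour over the perfect slip conditions, the exponential decay of resolvent solutions, and the Kondrat'ev asymptotics at the vertex collected in the local fact; the injectivity of $\lambda-\mA_p$ rests on the same ingredients.
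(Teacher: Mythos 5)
Your overall architecture (establish the small-$p$ result, flip it to the dual range of exponents by duality, close the gap by complex interpolation of the projected scale $\mL^p$) is the same as the paper's. The genuine problem is the ``local fact'' on which every nontrivial step of your argument leans: the assertions that $v=(\lambda-\mA_q)^{-1}g$ decays exponentially at infinity and, more seriously, that $\rho^{|\alpha|-2}\partial^\alpha v$ is \emph{bounded} near the vertex because ``the retained angular modes have Kondrat'ev exponents exceeding $2$.'' This is not a proof; it is the statement of a pointwise asymptotic expansion of the resolvent solution at a conical point, which for the inhomogeneous problem $(\lambda+\Delta)v=g$ requires the full Kondrat'ev/Maz'ya--Rossmann machinery (the zero-order term $\lambda v$ has to be absorbed iteratively into the expansion, and the expansion itself has to be justified for the vector-valued perfect-slip system). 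That is essentially as hard as the theorem you are trying to prove, and the entire paper is structured precisely to avoid it. The same unproved asymptotics reappear in your injectivity argument for $\lambda-\mA_p$ (``any $w\in\ker(\lambda-\mA_p)$ decays exponentially and has vertex asymptotics of order $>2$''), so the gap is not localized to one step.

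The fix is available inside the paper and you never invoke it: Proposition~\ref{konsitentbp}, the consistency of the \emph{elliptic} solution operators $(\mB_p^{-1})_{1<p<\infty}$ on $(\mL^p)_{1<p<\infty}$. With it, the conclusion of your local fact follows with no pointwise analysis at all: for $g$ of your form and $v=(\lambda-\mA_q)^{-1}g\in D(\mA_q)$ one has $-\Delta v=\lambda v-g=:w$, and $w\in\mL^q\cap\mL^s$ because $v\in W^{2,q}(G,\R^2)\hook L^s(G,\R^2)$ (Sobolev in 2D) and the projections $\widetilde\mQ$ are consistent; hence $v=\mB_q^{-1}w=\mB_s^{-1}w\in D(\mB_s)$ by Proposition~\ref{konsitentbp}, which is exactly what you need. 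This is the paper's route: it applies $\mB_{p'}^{-1}$ to $g=f-u$ and identifies the result with $u=(1+\mA_p')^{-1}f$ via the consistency of the elliptic family, obtaining the $K^2_{p'}$ bound, and only then interpolates. If you replace your local fact by this consistency argument (and likewise derive injectivity from Lemma~\ref{scallem} rather than from decay heuristics), your proof becomes correct and is then essentially the paper's proof.
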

\begin{proof}
By definition it is obvious that $\mA_{p'}\subset \mA_p'$. It is clear
that the converse inclusion, particularly the assertion on $D(\mA_p')$,  
is proved, if we can show that
\begin{equation}\label{resmaiso}
	(1+\mA_p)^{-1}\in\sLis(\mL^{p},\,D(\mA_{p}))
\end{equation}
for every $p\in(1,\infty)$.
By Theorem~\ref{thmmasapart} relation (\ref{resmaiso}) holds
for every $1<p<1+\delta$. We take $p$ out of that interval 
and consider (\ref{resmaiso}) for its H\"older conjugated exponent 
$p'$.  

Let $f\in \mL^p\cap\mL^{p'}$. Then there is a $u\in D(\mA_p')$ such that
\[
		(1+\mA_p')u=f.
\]
By the consistency of the resolvents of $\mA_p$ and $\mA_p'$ 
proved in Proposition~\ref{konsistentap} we see that $u\in D(\mA_p)$
and that 
\[
	(1+\mA_p)u=f \quad\Leftrightarrow\quad 
	\mA_pu=f-u=:g\in \mL^p\cap\mL^{p'}.
\]
On the other hand,
Proposition~\ref{proppropq}(4) and the consistency of
$(\mB_p^{-1})_{1<p<\infty}$ established in Proposition~\ref{konsitentbp}
imply that there is an $v\in D(\mB_p)\cap D(\mB_{p'})$ such that
\[
	\mB_p v=g.
\]
The fact that $D(\mA_p)\subset D(\mB_p)$ and $\mA_p=\mB_p$ on
$D(\mA_p)$ (Lemma~\ref{lempartap}(1),(4)) then gives $u=v$.
From this and Lemma~\ref{lempartap}(1) we obtain
\begin{align*}
	\|(1+\mA_p')^{-1}f\|_{D(\mA_{p'})}
	&=\|u\|_{D(\mA_{p'})}
	\le C\left(\|u\|_{p'}+\|v\|_{K^2_{p'}}\right)\\
	&\le C\|f\|_{p'}
	\quad (f\in \mL^p\cap\mL^{p'}).
\end{align*}
Since $\mL^p\cap\mL^{p'}$ lies dense in $\mL^{p'}$, relation
(\ref{resmaiso}) follows for $p'$. 

According to what we just have proved, Lemma~\ref{lempartap}(6), 
and Proposition~\ref{konsistentap}
the family
$\left((1+\mA_{p})^{-1}\right)_{p\in I}$ is consistent on
$(\mL^p)_{p\in I}$ for 
\begin{equation}\label{intdualp}
	I=(1,\infty)\setminus[1+\delta,(1+\delta)'].
\end{equation}
For the remaining $p$ we interpolate. In fact, since
$\mL^p=\widetilde\mQ L^p(G,\R^2)$ complex
interpolation and \cite[Theorem~1.17.1.1]{triebel} yield
\[
	\bigl[\mL^p,\mL^{p'}\bigr]_s=\mL^q,
	\quad \frac1q=s\frac1{p'}+(1-s)\frac1{p}.
\]
Furthermore, by \cite{triebel} we also have
\begin{align*}
	W^{2,q}(G,\R^2)&=\bigl[W^{2,p}(G,\R^2),W^{2,p'}(G,\R^2)\bigr]_s,\\ 
	K^2_{q}(G,\R^2)&=\bigl[K^2_p(G,\R^2),K^2_{p'}(G,\R^2)\bigr]_s.
\end{align*}
(Note that the second identity above follows, e.g., from 
\[
	W^{2,q}(\Omega,\R^2)
	=\bigl[W^{2,p}(\Omega,\R^2),W^{2,p'}(\Omega,\R^2)\bigr]_s,\\ 
\]
and an application of Stein's interpolation theorem \cite{voigt1992},
since the dependence of $\Theta^q_*,\Theta^*_q$ on
$z=1/q$ is analytic on a suitable strip in the complex plane.)
This shows that
\[
	(1+\mA_{p})^{-1}\in\sL\left(\mL^p,\,W^{2,q}(G,\R^2)
	\cap K^2_{q}(G,\R^2)\cap\mL^p\right)
\]
for every $p\in(1,\infty)$. For $f\in\mL^p\cap \mL^q$ with 
$q\in I$, we also see that $(1+\mA_{p})^{-1}f$ satisfies the 
boundary conditions included in $D(\mA_p)$. By a density argument 
and boundedness of the corresponding trace operators 
relation (\ref{resmaiso}) follows to be valid for all $p\in(1,\infty)$.
This completes the proof.
\end{proof}
Thanks to Theorem~\ref{korollar_ergebnis_paper} we can generalize
Theorem~\ref{thmmasapart} to all $p\in(1,\infty)$.
\begin{theorem}\label{mainlap}
Let $1<p<\infty$. 
Then $\mA_p$ with domain
\[
	D(\mA_p) =   \left\{ u \in W^{2,p}(G, \mathbb{R}^2):\,
	\mathrm{curl}\, u=0,\, \nu \cdot u=0\,
	\text{on}\, \partial G \right\}\cap K^2_p(G,\R^2)\cap \mL^p
\]
is $\cR$-sectorial on $\mL^p$ with $\phi^\cR_{\mA_p}<\pi/2$, 
and hence has maximal regularity on $\mL^p$. 
\end{theorem}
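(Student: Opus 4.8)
\textbf{Proof plan for Theorem~\ref{mainlap}.}
The strategy is to bootstrap from the partial result Theorem~\ref{thmmasapart}, which already gives $\cR$-sectoriality of $\mA_p$ on $\mL^p$ with $\cR$-angle $<\pi/2$ for $1<p<1+\delta$, to the full range $1<p<\infty$ by exploiting the consistency of the resolvents established in Theorem~\ref{korollar_ergebnis_paper}. First I would recall that, by Theorem~\ref{korollar_ergebnis_paper}, the family $\bigl((\lambda-\mA_p)^{-1}\bigr)_{1<p<\infty}$ is consistent on $(\mL^p)_{1<p<\infty}$, and in particular $(1+\mA_p)^{-1}\in\sLis(\mL^p,D(\mA_p))$ for every $p\in(1,\infty)$, with $D(\mA_p)$ given by the stated formula. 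This already yields that $0\in\rho(-\mA_p)$ for all $p$; combined with Lemma~\ref{scallem} (applicable since $\rho(A_p)\neq\emptyset$ now holds for all $p$, forcing injectivity as in Remark~\ref{scalrem}) we get that $\mA_p$ is injective with dense range, so by Remark~\ref{scalrem} it suffices to verify $\cR$-boundedness of $(\lambda(\lambda+\mA_p)^{-1})_{\lambda>0}$ — indeed, of the larger family on a sector of angle $>\pi/2$ — to conclude $\cR$-sectoriality with the claimed angle.

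The core of the argument is an $\cR$-boundedness extrapolation. The plan is: for $p$ in the ``good'' interval $(1,1+\delta)$ and its H\"older conjugate interval $((1+\delta)',\infty)$, $\cR$-sectoriality of $\mA_p$ follows directly — for the former from Theorem~\ref{thmmasapart}, and for the latter from the permanence of $\cR$-sectoriality under Banach space duality (the dual of an $\cR$-sectorial operator on a reflexive space is $\cR$-sectorial with the same angle), together with the identification $\mA_p'=\mA_{p'}$ from Theorem~\ref{korollar_ergebnis_paper}. So it remains to handle $p$ in the middle interval $[1+\delta,(1+\delta)']$ containing $2$. For these $p$ I would use complex interpolation: writing $\mL^q=[\mL^p,\mL^{p'}]_s$ (valid since $\mL^q=\widetilde\mQ L^q(G,\R^2)$ is a complemented subspace, so interpolation commutes with the projector, exactly as used in the proof of Theorem~\ref{korollar_ergebnis_paper}), and then invoking the standard fact that $\cR$-sectoriality (with a given $\cR$-angle) on the endpoints of a complex interpolation couple, together with consistency of the resolvents on the endpoint spaces, passes to the interpolation space — see e.g. \cite{Kunstmann} or \cite{Weis}. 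The consistency needed here is exactly Theorem~\ref{korollar_ergebnis_paper}. This yields $\cR$-sectoriality of $\mA_q$ on $\mL^q$ with $\phi^\cR_{\mA_q}<\pi/2$ for all $q\in(1,\infty)$.

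Finally, by the equivalence of maximal regularity and $\cR$-sectoriality with $\cR$-angle $<\pi/2$ on a UMD space \cite[Theorem~4.2]{Weis} (note $\mL^p$ is a closed subspace of $L^p(G,\R^2)$, hence UMD), the $\cR$-sectoriality just obtained gives maximal regularity of $\mA_p$ on $\mL^p$ for every $p\in(1,\infty)$, which is the assertion.

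I expect the main obstacle to be the interpolation step for the middle interval: one must ensure that $\cR$-sectoriality genuinely interpolates, which requires not merely sectoriality at the endpoints but a uniform control on the relevant $\cR$-bounds together with the resolvent consistency — the latter is precisely why Theorem~\ref{korollar_ergebnis_paper} and Proposition~\ref{konsistentap} were set up beforehand. A secondary technical point is checking that the interpolation identity $[\mL^p,\mL^{p'}]_s=\mL^q$ holds with the induced norms, which again rests on $\mL^p$ being a complemented subspace of $L^p(G,\R^2)$ via the bounded projector $\widetilde\mQ$, so that Triebel's retraction/coretraction argument \cite[Theorem~1.17.1.1]{triebel} applies; this is routine but should be stated.
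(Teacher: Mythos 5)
Your proposal is correct and follows essentially the same route as the paper: $\cR$-sectoriality on $(1,1+\delta)$ from Theorem~\ref{thmmasapart}, transfer to the dual range via $\mA_p'=\mA_{p'}$, complex interpolation (using the complemented-subspace identity $[\mL^p,\mL^{p'}]_s=\mL^q$ and resolvent consistency) for the middle interval, injectivity/dense range from Remark~\ref{scalrem}, and finally the Weis equivalence for maximal regularity. The only cosmetic difference is the reference for interpolation of $\cR$-sectoriality (the paper cites \cite[Theorem~3.23]{Kaip}), and note that $(1+\mA_p)^{-1}\in\sLis(\mL^p,D(\mA_p))$ gives $-1\in\rho(\mA_p)$ rather than invertibility of $\mA_p$ itself, but this does not affect the argument.
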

\begin{proof}
Due to $\mA_p'=\mA_{p'}$ and Theorem~\ref{thmmasapart}, 
the operator $\mA_p$ with $D(\mA_p)$ as stated is $\cR$-sectorial with
$\phi^\cR_{\mA_p}<\pi/2$
for $p\in I$ with $I$ given in (\ref{intdualp}).
Note that injectivity, hence also $\overline{R(\mA_p)}=\mL^p$,
follows from Remark~\ref{scalrem}.
Since the property of $\mathcal R$-sectoriality is invariant under 
interpolation \cite[Theorem 3.23]{Kaip}, the result follows by interpolation
and the equivalence of maximal regularity and $\mathcal R$-sectoriality
\cite[Theorem 4.2]{Weis}.
\end{proof}

In this subsection we have shown by consistency 
arguments that regularity for the elliptic operator $\mB_p$ 
transfers to the parabolic operator $\partial_t+\mA_p$.  
The next result, which in principle shows that the converse is true 
as well, we state also for later purposes.
\begin{proposition}\label{mainlapconv}
Let $1<p<\infty$. If $(\lambda_k)\subset \rho(\mA_p)$ such that
$\lim_{k\to\infty}\lambda_k=0$, then
\[
	\lim_{k\to\infty}(\lambda_k-\mA_p)^{-1}=\mB_p^{-1}
	\quad\text{in}\quad \sL\left(\mL^p,\, K_p^2(G,\R^2)\right). 
\]
In particular, $D(\mA_p)$ is dense in $D(\mB_p)$. 
\end{proposition}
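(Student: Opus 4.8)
The plan is to reduce everything to the near‑origin behaviour of the resolvent of a sectorial operator, exploiting that $\mA_p$ is a restriction of the elliptic operator $\mB_p$, which is an isomorphism onto $\mL^p$. Recall from Lemma~\ref{lempartap}(1),(4) that $D(\mA_p)\subset D(\mB_p)$ with $\mA_pu=\mB_pu$ on $D(\mA_p)$, from Proposition~\ref{proppropq}(4) that $\mB_p^{-1}\in\sLis(\mL^p,D(\mB_p))$, and from Lemma~\ref{lemma_result} that the norm of $D(\mB_p)$ equals that of $K^2_p(G,\R^2)$. In particular $(\lambda_k-\mA_p)^{-1}f$ and $\mB_p^{-1}f$ both lie in $K^2_p(G,\R^2)$ for $f\in\mL^p$, so the statement is meaningful.

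First I would record the elementary resolvent identity. For $\lambda\in\rho(\mA_p)$ and $f\in\mL^p$ put $u:=(\lambda-\mA_p)^{-1}f\in D(\mA_p)\subset D(\mB_p)$; then $u-\mB_p^{-1}f\in D(\mB_p)$ and, since $\mB_pu=\mA_pu$ while $\mB_p\mB_p^{-1}f=f$, a one‑line computation yields $\mB_p\bigl(u-\mB_p^{-1}f\bigr)=\lambda u$, i.e.
\[
	(\lambda-\mA_p)^{-1}f-\mB_p^{-1}f=\lambda\,\mB_p^{-1}(\lambda-\mA_p)^{-1}f .
\]
Since $\|\cdot\|_{D(\mB_p)}=\|\cdot\|_{K^2_p(G,\R^2)}$ and $\mB_p^{-1}$ is bounded from $\mL^p$ onto $D(\mB_p)$, this gives the key estimate
\[
	\bigl\|(\lambda-\mA_p)^{-1}f-\mB_p^{-1}f\bigr\|_{K^2_p(G,\R^2)}\le C\,\bigl\|\lambda\,(\lambda-\mA_p)^{-1}f\bigr\|_{\mL^p},
\]
valid for all $f\in\mL^p$ with $C$ independent of $\lambda$ and $f$. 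Thus it only remains to show that the right‑hand side vanishes as $\lambda\to0$.

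For this I would invoke sectoriality. By Theorem~\ref{mainlap} the operator $\mA_p$ is $\cR$‑sectorial of angle $<\pi/2$, hence in particular sectorial (so that $\overline{R(\mA_p)}=\mL^p$) and, by Remark~\ref{scalrem}, injective. Consequently, along any sequence $\lambda_k\in\rho(\mA_p)$ with $\lambda_k\to0$ the operators $\lambda_k(\lambda_k-\mA_p)^{-1}$ are uniformly bounded in $\sL(\mL^p)$ and converge strongly to $0$: on the dense subspace $R(\mA_p)$ one computes directly $\lambda(\lambda-\mA_p)^{-1}\mA_pg=\lambda\mA_p(\lambda-\mA_p)^{-1}g=\lambda^2(\lambda-\mA_p)^{-1}g-\lambda g\to0$, and on all of $\mL^p$ the claim follows from the uniform bound (cf.\ \cite{Haase}). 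Inserting this into the key estimate gives $(\lambda_k-\mA_p)^{-1}f\to\mB_p^{-1}f$ in $K^2_p(G,\R^2)$ for every $f\in\mL^p$, which is the asserted convergence. The density of $D(\mA_p)$ in $D(\mB_p)$ then follows at once: for $w\in D(\mB_p)$ put $f:=\mB_pw\in\mL^p$, so that $\mB_p^{-1}f=w$, while $(\lambda_k-\mA_p)^{-1}f\in D(\mA_p)$ converges to $w$ in the $D(\mB_p)$‑norm.

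I expect the point requiring most care to be the \emph{mode} of convergence rather than any single estimate. Since $D(\mA_p)$ is a proper (dense) subspace of $D(\mB_p)$, the operator $\mA_p$ is not surjective onto $\mL^p$, so $0\in\sigma(\mA_p)$ and $\|\lambda_k(\lambda_k-\mA_p)^{-1}\|_{\sL(\mL^p)}$ does not tend to $0$; hence $(\lambda_k-\mA_p)^{-1}$ does not converge to $\mB_p^{-1}$ in the operator norm of $\sL\bigl(\mL^p,K^2_p(G,\R^2)\bigr)$, and the limit in the proposition is to be read in the strong operator topology. What makes even the strong convergence work is precisely that $K^2_p(G,\R^2)$ does not see $L^p$‑integrability near infinity: the error is controlled only by $\|\lambda_k(\lambda_k-\mA_p)^{-1}f\|_{\mL^p}$, which tends to $0$ strongly by sectoriality although its operator norm does not. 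The resolvent identity and the a priori estimate themselves are routine.
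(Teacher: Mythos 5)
Your argument is correct and rests on the same two ingredients as the paper's proof --- the equivalence $\|\cdot\|_{K^2_p}\simeq\|\mB_p\cdot\|_{\mL^p}$ on $D(\mB_p)$ and the sectoriality of $\mA_p$ --- but it organizes them differently, and in a way that is arguably more robust. The paper first shows that $\bigl((\lambda_k-\mA_p)^{-1}f\bigr)_k$ is Cauchy in $K^2_p$ by applying the resolvent identity to two parameters $\lambda_k,\lambda_{k+\ell}$ (which produces the factor $|\lambda_{k+\ell}/\lambda_k-1|$ and therefore needs care about the ratio of the parameters, uniformly in $\ell$), and only afterwards identifies the limit via $\mB_pv=\lim_k\mA_p(\lambda_k-\mA_p)^{-1}f$. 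You instead compare each resolvent directly with $\mB_p^{-1}f$ through an exact algebraic identity and then invoke the standard fact that $\lambda_k(\lambda_k-\mA_p)^{-1}\to0$ strongly because $\mA_p$ is sectorial with dense range; this requires no relation between consecutive $\lambda_k$ and identifies the limit in one stroke. Two bookkeeping caveats, neither of which I count as a gap. First, with the paper's literal convention $(\lambda-\mA_p)u=f$ and $\mA_p u=\mB_p u=-\Delta u$ one actually gets $\mB_p\bigl(u-\mB_p^{-1}f\bigr)=\lambda u-2f$, i.e.\ $u+\mB_p^{-1}f=\lambda\mB_p^{-1}u$, so the limit comes out as $-\mB_p^{-1}$; your step ``$\mB_p(u-\mB_p^{-1}f)=\lambda u$'' reproduces exactly the sign slip already present in the proposition and in the paper's own proof (where in fact $\mA_p(\lambda_k-\mA_p)^{-1}f=\lambda_k(\lambda_k-\mA_p)^{-1}f-f\to-f$), and under the evidently intended reading $(\lambda+\mA_p)^{-1}$ your identity is correct up to the irrelevant sign of the right-hand side. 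Second, the uniform bound on $\lambda_k(\lambda_k-\mA_p)^{-1}$ tacitly assumes the $\lambda_k$ approach $0$ within a sector where the sectoriality estimate holds; this hypothesis is equally implicit in the paper. Your closing remark that the convergence cannot hold in operator norm because $D(\mA_p)$ is a proper subspace of $D(\mB_p)$ is plausible but not proved; since it is only commentary and your proof establishes precisely the strong convergence that the paper itself proves and later uses, this does not affect correctness.
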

\begin{proof}
Pick $f\in \mL^p$.
For $\ell\in\N$ by the resolvent identity, Lemma~\ref{scallem}, and
since $\mA_p$ is sectorial we obtain
\begin{align*}
	&\|(\lambda_{k+\ell}-\mA_p)^{-1}f-(\lambda_k-\mA_p)^{-1}f\|_{K_p^2}\\
	&\le C\|(\lambda_{k+\ell}-\lambda_k)(\lambda_k-\mA_p)^{-1}
	\mA_p(\lambda_{k+\ell}-\mA_p)^{-1}f\|_{p}\\
	&\le C\|(\lambda_{k+\ell}/\lambda_k-1)f\|_{p}\to 0
	\quad (k\to\infty). 
\end{align*}
Thus $(\lambda_k-\mA_p)^{-1}f\to v$ in $D(\mB_p)$. The fact that
$\mB_p\in\sL(D(\mB_p),\,\mL^p)$, Lemma~\ref{lempartap}(1),
and again sectoriality of $\mA_p$ yield
\[
	\mB_p v=\lim_{k\to\infty}\mA_p(\lambda_k-\mA_p)^{-1}f=f,
\]
hence $v=\mB_p^{-1}f$.
\end{proof}

\section{The Stokes equations}\label{secstokes}

In this section, we consider the Stokes problem
\eqref{stokes_gleichung}. We introduce the space of solenoidal vector
fields. For $1<p< \infty$ and $1/p + 1/ p'=1$ we set
\begin{equation}\label{defsolss}
	L^p_\sigma(G):= \left\{ u \in L^p(G, \mathbb{R}^2): \ \int_G u
	\cdot \nabla \varphi\, d(x_1,x_2) =0 \ \ ( \varphi \in
	\widehat{W}^{1,p'}(G)
	) \ \right\},  
\end{equation}
where
\begin{equation}\label{defsolss2}
\widehat{W}^{1,p'}(G):= \left\{ \varphi \in  L^1_{loc}(G): \ \nabla \varphi \in L^{p'}(G, \mathbb{R}^2) \right\}.
\end{equation}
Since $C_c^\infty (G) \subset \widehat{W}^{1,p}(G)$, 
it is evident that $u \in L_\sigma^p(G)$ satifies the condition
$\text{div}\,u =0$ in the sence of distributions. Moreover $ \nu \cdot
u$ is well-defined in the trace space (Slobodeckii space)
$W^{-1/p}_p(\cO)$ for all bounded domains $\cO$ with $\overline{\cO} \subset \partial G \setminus \{ (0,0) \}$. This yields that the boundary condition $u \cdot \nu =0$ is fulfilled in a local sense away from $0$.

We define the Stokes operator $A_S$  
as the part of $A_p$ in $L^p_\sigma(G)$, i.e.,
\begin{equation}\label{defstokesop}
	\begin{aligned}
		A_S u &:= A_p |_{L^p_\sigma(G)} u ,  \ \ u \in D(A_S), \\
		D(A_S) & := \left\{ v \in D(A_p) \cap L^p_\sigma(G): \ A_p v \in L^p_\sigma(G) \right\}.
	\end{aligned}
\end{equation} 
Note that for the boundary conditions considered here, as a well-known
fact, the Helmholtz projection and the Laplacian commute, and the projection does not appear in the definition of $A_S$. The next lemma justifies this definition of the Stokes operator.
\begin{lemma} \label{lm:4.1} Let $1<p<\infty$. Then
\begin{equation*} D(A_{S})=D(A_p)\cap L^p_{\sigma}(G).
\end{equation*}
\end{lemma}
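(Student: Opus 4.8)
By the very definition \eqref{defstokesop} of $A_S$ we have $D(A_S)\subseteq D(A_p)\cap L^p_\sigma(G)$, so the only assertion to prove is that every $v\in D(A_p)\cap L^p_\sigma(G)$ already satisfies $A_pv=-\Delta v\in L^p_\sigma(G)$; once this is known, $v\in D(A_S)$ follows. In view of \eqref{defsolss}, the plan is therefore to verify
\[
	\int_G(-\Delta v)\cdot\nabla\varphi\,dx=0\qquad\text{for all }\varphi\in\widehat W^{1,p'}(G).
\]

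The main tool I would use is the identity $\Delta v=\nabla\operatorname{div}v-\operatorname{curl}'\operatorname{curl}v$ (the one employed in the proof of Proposition~\ref{konsistentap}, with $\operatorname{curl}'\psi=(\partial_{x_2}\psi,-\partial_{x_1}\psi)^{T}$). Since $v\in L^p_\sigma(G)\cap W^{2,p}(G,\R^2)$, one has $\operatorname{div}v=0$ and hence $\nabla\operatorname{div}v=0$, so $-\Delta v=\operatorname{curl}'\operatorname{curl}v$. Setting $w:=\operatorname{curl}v\in W^{1,p}(G)$ and using $\operatorname{curl}'w\cdot\nabla\varphi=\partial_{x_2}(w\,\partial_{x_1}\varphi)-\partial_{x_1}(w\,\partial_{x_2}\varphi)$, the Gauß theorem would turn $\int_G\operatorname{curl}'w\cdot\nabla\varphi\,dx$ into the boundary integral $\int_{\partial G}w\,(\nu_2\,\partial_{x_1}\varphi-\nu_1\,\partial_{x_2}\varphi)\,d\sigma$, i.e.\ into the integral of $w$ against the tangential derivative of $\varphi$ along $\partial G$. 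This integral vanishes because $w=\operatorname{curl}v=0$ on $\partial G$, which is precisely one of the boundary conditions built into $D(A_p)$ in \eqref{operator_ap}. Hence $\int_G(-\Delta v)\cdot\nabla\varphi\,dx=0$ for every admissible $\varphi$, so $A_pv\in L^p_\sigma(G)$ and the lemma follows.

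The one point that needs genuine care — and which I expect to be the main obstacle — is the justification of this integration by parts on the \emph{unbounded} wedge $G$, which in addition has a corner at the origin, against a test function $\varphi$ with only $\nabla\varphi\in L^{p'}(G)$ and no control on second derivatives. I would first reduce to $\varphi$ in a dense class of smoother functions (e.g.\ $\varphi\in C^\infty(\overline G)$ with $\nabla\varphi\in L^{p'}(G)$), for which all manipulations are classical; density of the corresponding gradients in $\{\nabla\varphi:\varphi\in\widehat W^{1,p'}(G)\}$ with respect to the $L^{p'}$-norm follows from the $(\varepsilon,\infty)$-domain structure of $G$ and the homogeneous Sobolev extension operator already invoked in the proof of Lemma~\ref{lempartap}. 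For such $\varphi$ one performs the integration by parts on the truncated regions $G\cap(B_R\setminus B_\varepsilon)$ and lets $\varepsilon\to0$, $R\to\infty$: the edge parts of the boundary contribute nothing since $w=0$ there, the arc near the vertex is controlled by $v\in K^2_p(G,\R^2)$ (in particular $\rho^{|\alpha|-2}\partial^\alpha v\in L^p(G,\R^2)$), and the outer arc vanishes by the $L^p$-decay of $v$ and $\nabla^2v$. We note in passing that the statement can also be read off more abstractly from the fact, recalled before the lemma, that under perfect-slip boundary conditions the Helmholtz projection $P$ commutes with $A_p$, which gives $A_pv=A_pPv=PA_pv\in L^p_\sigma(G)$ at once for $v\in D(A_p)\cap L^p_\sigma(G)$.
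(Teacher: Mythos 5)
Your argument is correct and essentially the same as the paper's: both exploit that $\div\, v=0$ reduces $-\Delta v$ to $\curl'\curl v$ and that the resulting boundary term after integration by parts vanishes because $\curl v=0$ on $\partial G$. The only (technical) difference is in justifying the integration by parts against $\varphi\in\widehat W^{1,p'}(G)$: where you propose approximating $\varphi$ by smooth functions and truncating near the vertex and at infinity, the paper pairs the trace of $\curl v\in W^{1,p}(G)$ directly with the generalized normal trace $\nu\cdot\curl'\varphi\in W^{-1/p'}_{p'}(\partial G)$, which is well defined because $\curl'\varphi\in L^{p'}(G,\R^2)$ is divergence free.
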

\begin{proof} We only have to show, that the right-hand side is a 
subset of $D(A_{S})$. To this end, let $u\in D(A_p)\cap
L^p_{\sigma}(G)$ and $f:=A_p u$. 
It remains to show that $f\in L^p_{\sigma}(G)$. 
By the fact that $f=A_pu=\curl'\curl u$
	and $u\in D(A_p)\cap L^p_{\sigma}(G)$, the Gau{\ss} theorem
	yields
	\begin{align*}  
	\int_{G}f\cdot \nabla \vp\, d(x_1,x_2)
	&=\int_{G}(\curl'\curl u)\cdot \nabla \vp\, d(x_1,x_2)\\
	&= -\left\langle\curl u,\, \nu\cdot \curl'\vp\right
	    \rangle_{W^{1-1/p}_p(\partial G), W^{-1/p'}_{p'}(\partial G)}
	=0 
	\end{align*} 
	for all $\vp \in\widehat{W}^{1,p'}(G)$.
Note that $\mathrm{div}\,\curl'\vp=0$, hence the trace $\nu\cdot \curl'\vp$
is defined in $W^{-1/p'}_{p'}(\partial G)$ in the usual sense.
By the fact that $\curl u\in W^{1,p}(G,\R^2)$ therefore the
duality pairing on the boundary above is well-defined.
The proof is complete.
\end{proof}

Recall from \eqref{lemm_projektion} that $L^p(\Omega,\R^2)$ is
decomposed in $L^p(\R,E^p_3)$ and $L^p(\mathbb{R},\la e_0,e_1,
e_2\ra)$ with $E^p_m$ defined in the lines before \eqref{lemm_projektion}
and $e_0,e_1,e_2$ the normed eigenfunctions to the first three eigenvalues
of the operator $\mathcal T_{p,\theta}$ introduced
in Subsection~\ref{sec_prob_transformiert}.

In order to transfer the properties of $\mA_p$ to the 
Stokes operator $A_S$ a crucial point is that 
$\widetilde\Theta_*^pL^p(\mathbb{R},\la e_0,e_1,e_2\ra)$ does
not contain non-trivial solenoidal vector fields.
To carry over full Sobolev regularity, however, this fact is not 
enough. This purpose requires stronger properties:
\begin{proposition}\label{closedrangeq}
Let $1 <p< \infty$. Then there exists a $\delta>0$ such that
\begin{enumerate}
\item $\|\widetilde \mQ u\|_p\ge \delta\|u\|_p$ for all $u\in
L^p_\sigma(G)$,
\item $\|\mQ u\|_{K_p^2}\ge \delta\|u\|_{K_p^2}$ 
for all $u\in D(B_p)$ such that $\div\, u=0$, and 
\item $\|\mQ u\|_{D(A_p)}\ge \delta\|u\|_{D(A_p)}$ for all $u\in D(A_S)$.
\end{enumerate}
\end{proposition}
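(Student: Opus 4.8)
The plan is to prove all three estimates by the same scheme: pass to the transformed setting on the layer $\Omega$, where $\widetilde\mQ$ (resp.\ $\mQ$) becomes the Fourier-type projection $\mathbb P_3$ onto $L^p(\R,E^p_3)$, and then show that the complementary subspace $L^p(\R,\la e_0,e_1,e_2\ra)$ meets the relevant subspace of solenoidal (or divergence-free) fields only trivially and, moreover, transversally in the sense that the projection is bounded below there. Concretely, for (1) I would argue by contradiction: if no such $\delta$ exists, pick $u_k\in L^p_\sigma(G)$ with $\|u_k\|_p=1$ and $\|\widetilde\mQ u_k\|_p\to 0$. Transforming via $\widetilde\Theta^*_p$ and writing $w_k:=\widetilde\Theta^*_p u_k$, we get $\|w_k\|_p$ bounded below and above (since $\widetilde\Theta_*^p\in\sLis$) and $\|\mathbb P_3 w_k\|_p\to 0$, so $(1-\mathbb P_3)w_k$ carries all the mass; thus, up to the isomorphism, $u_k$ is asymptotically a field in $\widetilde\Theta_*^p L^p(\R,\la e_0,e_1,e_2\ra)$. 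The key computational input is that a field of the form $\widetilde\Theta_*^p\bigl(\sum_{i=0}^2 c_i(x)e_i(\theta)\bigr)$ corresponds, back on the wedge, to a vector field whose angular dependence lies in the span of $\cos\theta$, $\sin\theta$, constants (the eigenfunctions $v^0,v^1$ with the $\pm$ signs), and one checks directly that the only such fields satisfying $\mathrm{div}\,u=0$ in the distributional sense together with $\nu\cdot u=0$ on $\partial G$ are the trivial one — here the restriction $\theta_0<\pi$ and the precise form of the three lowest eigenfunctions enter. Combining this with the closedness of $L^p_\sigma(G)$ and of the ``bad'' subspace yields that the limit is simultaneously solenoidal and in the bad subspace, hence zero, contradicting $\|u_k\|_p=1$ after extracting a suitable convergent piece; to make the contradiction rigorous without compactness (the domain is unbounded) I would instead phrase it as: the sum $L^p_\sigma(G)+\widetilde\Theta_*^p L^p(\R,\la e_0,e_1,e_2\ra)$ is direct and closed, so the projection along the second summand is bounded, which is exactly (1).

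For (2) and (3) the structure is the same but in the stronger norms $K^2_p$ and $D(A_p)$. For (2) I would use that $\mQ$ on $D(B_p)$ is, under the isomorphism $\Theta^*_p:D(B_p)\to D(T_p)$ of Lemma~\ref{lemma_result}, exactly the restriction of $\mathbb P_3$, and that $\mathbb P_3$ commutes with $T_{p,x}$ and $T_{p,\theta}$ (Lemma~\ref{lemm_kommutativ}, Corollary~\ref{lemprojdoms}); hence $\mQ$ is bounded on $D(B_p)$ with its graph norm, equivalently on $K^2_p$ by Lemma~\ref{lemma_result}. The lower bound then again follows from directness and closedness of the decomposition $\{u\in D(B_p):\mathrm{div}\,u=0\}\oplus (\text{bad part in }D(B_p))$, where the bad part is $\Theta_*^p D_3(T_p)^{\perp}$, i.e.\ the $D(B_p)$-functions whose transform lies in $L^p(\R,\la e_0,e_1,e_2\ra)$; the intersection with the divergence-free fields is trivial by the same angular-mode computation as in (1), now for fields that are additionally in $K^2_p$. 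For (3) one restricts further to $D(A_S)=D(A_p)\cap L^p_\sigma(G)$ (Lemma~\ref{lm:4.1}), uses that $A_p=B_p$ on $D(A_p)$ and $\widetilde\mQ A_p=A_p\mQ$ (Lemma~\ref{lempartap}(2)), so that $\|\mQ u\|_{D(A_p)}\simeq \|\mQ u\|_p+\|A_p\mQ u\|_p=\|\mQ u\|_p+\|\widetilde\mQ A_p u\|_p$, and applies (1) to $u$ and to $A_p u\in L^p_\sigma(G)$ (the latter membership is precisely the content of Lemma~\ref{lm:4.1}) to bound both summands below by $\|u\|_p$ and $\|A_p u\|_p$ respectively; Lemma~\ref{scallem} then lets one absorb lower-order terms and recover the full $D(A_p)$-norm.

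I expect the main obstacle to be the ``transversality''/closedness step rather than the bookkeeping with the transforms: showing not merely that $L^p_\sigma(G)\cap(\text{bad subspace})=\{0\}$ but that the projection $\widetilde\mQ$ restricted to $L^p_\sigma(G)$ is bounded below, uniformly, on an unbounded domain where no compactness is available. I would handle this by an explicit description of the three-dimensional (in the angular variable, with $x$-dependent coefficients) bad subspace: writing $\widetilde\Theta_*^p(c_0 e_0+c_1 e_1+c_2 e_2)$ back in polar coordinates on $G$ and imposing $\mathrm{div}\,u=0$ and $\nu\cdot u=0$ forces, after separating the $x$- (i.e.\ $r$-) dependence, an ODE system in $r$ whose only $L^p$-integrable solution on $(0,\infty)$ compatible with the wedge geometry vanishes — this is where $\theta_0<\pi$ is used to rule out the borderline homogeneous harmonic fields $r^{\pm 1}$ times $\cos\theta,\sin\theta$ from being simultaneously solenoidal, tangential at the boundary, and of the required integrability. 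Once the algebraic/ODE characterization shows the intersection is $\{0\}$, closedness of both $L^p_\sigma(G)$ (standard) and the bad subspace (it is a complemented finitely-generated-over-$L^p(\R)$ subspace, hence closed) gives that their sum is closed and direct, and the open mapping theorem delivers the uniform lower bound $\delta$. The remaining work — transporting these statements through $\widetilde\Theta_*^p$, $\Theta_*^p$ and the norm equivalences of Lemmas~\ref{lemma_result} and \ref{lempartap}(1) — is routine.
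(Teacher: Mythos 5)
Your overall picture is right: the estimate is equivalent to the statement that the ``bad'' subspace $N:=\widetilde\Theta_*^p L^p(\R,\la e_0,e_1,e_2\ra)$ and $L^p_\sigma(G)$ form a direct sum with positive angle. But the step on which everything hinges is not proved. You write that closedness of $L^p_\sigma(G)$ and of $N$, together with $L^p_\sigma(G)\cap N=\{0\}$, ``gives that their sum is closed and direct, and the open mapping theorem delivers the uniform lower bound.'' That implication is false: two closed subspaces of a Banach space with trivial intersection can have a non-closed (even dense) sum, and then the projection along one onto the other is unbounded. The open mapping theorem only applies \emph{after} one knows the sum is closed, which is exactly the quantitative transversality you need to establish; $N$ is not finite dimensional (it is finitely generated over $L^p(\R)$, not over $\C$), so no soft argument closes this gap. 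Your explicit ODE/angular-mode computation would only show that the intersection is trivial, i.e., the qualitative part. The paper obtains the lower bound directly and quantitatively: it computes $\div_{\widetilde\Theta}\varphi=\sum_{j=0}^2(s_j+\partial_x)\varphi_j b_j$ for $\varphi$ in the bad subspace, uses that $s_j+\partial_x$ is an isomorphism from $L^p(\R)$ onto $W^{-1,p}(\R)$ (or onto $\hW^{-1,p}(\R)$ in the degenerate case $s_j=0$, which your proposal does not address), and then exploits $\div_{\widetilde\Theta}\widetilde\Theta^*_pu=0$ to bound $\|\varphi_j\|_p\le C\|\div_{\widetilde\Theta}(\widetilde\Theta^*_pu-\varphi)\|\le C\|\widetilde\mQ u\|_p$. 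The genuinely delicate point, Lemma~\ref{divopbddhat}, is to show that the images $U_0,U_1,U_2$ of the three bad modes and the closure $V$ of $\div_{\widetilde\Theta}L^p(\R,E^p_3)$ form a \emph{direct} sum in a suitable negative-order space, so that the component $(s_j+\partial_x)\varphi_j b_j$ can be isolated from $\div_{\widetilde\Theta}$ of the good part. This is the quantitative substitute for your unproved closedness claim, and it is missing from your argument.

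A second, smaller problem concerns part (3): you propose to use the graph norm $\|\mQ u\|_p+\|A_p\mQ u\|_p$ and to recover the full $D(A_p)$-norm via Lemma~\ref{scallem}. That lemma assumes $\rho(A_p)\neq\emptyset$, which by Theorem~\ref{thmmasneg} fails for exactly those $p$ and $\theta_0$ violating (\ref{spec_cond}); so this route is unavailable in general. The paper instead deduces (3) from (1) and (2) using only the norm equivalence $\|\cdot\|_{D(A_p)}\simeq\|\cdot\|_p+\|\cdot\|_{K^2_p}$ of Lemma~\ref{lempartap}(1), which requires no resolvent information.
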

\begin{remark}
Proposition~\ref{closedrangeq} relies of course on the specific structure
of the solenoidal subspace. In fact, its proof (including the proof of the 
subsequent Lemma~\ref{divopbddhat}) shows that the operator '$\div$' is
isomorphic on the complemented space
to $\mL^p$ and on the corresponding higher order complemented 
subspaces. Furthermore, it keeps the 
complemented structure in its image. This essentially can be read off the
representations of the transformed '$\div$' operator applied on elements of
the complemented subspaces given in
(\ref{divvpcalc}) and (\ref{divvpcalcot}) below.
\end{remark}
\begin{proof}[Proof of Lemma~\ref{closedrangeq}(1)]
{\bf Step~1.}
Recall from Subsection~\ref{sec_prob_transformiert} that the eigenfunctions 
to the first three eigenvalues $(\lambda_i)_{i \in
\{0,1,2\}} \in \sigma(\mathcal T_{p,\theta})$ are explicitly given as
	\begin{itemize}
	\item $e_0(\theta):= \frac{1}{\sqrt{\theta_0}} \begin{pmatrix} 1 \\ 0
	\end{pmatrix} $ which corresponds to $\lambda_0= -1$  and
	\item $e_k(\theta) := \frac{1}{\sqrt{\theta_0}}  \begin{pmatrix} \cos
	(\frac{k \pi}{\theta_0} \theta )\\ - \sin( \frac{k
	\pi}{\theta_0} \theta ) \end{pmatrix} $ which corresponds to
	$\lambda_k:= -(\frac{k \pi}{\theta_0}-1)^2$ for $k \in \{1,2 \}$.
	\end{itemize}	
We notice that, depending on the value of the angle $\theta_0$, 
there might be a doubled eigenvalue. This, however, does not matter
for what follows.
An element $ \varphi\in L^p(\mathbb{R},\la e_0,e_1,e_2\ra)$ 
is then represented by
\begin{equation}\label{lincombvp} 
\varphi(x, \theta)
= \varphi_0(x)e_0(\theta)+\varphi_1(x)e_1(\theta)
+\varphi_2(x)e_2(\theta)
\end{equation}
with coefficients $\varphi_i \in L^p(\mathbb{R})$ for $i \in \{0,1,2\}$. 

{\bf Step~2.}
On our way to show (1) we first derive suitable estimates for 
$\vp\in L^p(\R,\la e_0,e_1,e_2\ra)$ in terms of the
transformed divergence operator. To this end, first observe that 
\[
	\mathrm{div}\, \widetilde\Theta_*^p v\circ\psi
	=e^{(\beta_p-3)x}\left((\beta_p-1+\partial_x)v_x+\partial_\theta
	v_\theta\right)=:e^{(\beta_p-3)x}\div_{\widetilde\Theta}\,v.
\]
Applying the transformed divergence operator $\div_{\widetilde\Theta}$ 
to representation (\ref{lincombvp}) yields
\begin{equation}\label{divvpcalc}
\begin{split}
 \div_{\widetilde\Theta}\,\vp
 & =\left(\beta_p -1+\partial_x\right)\varphi_0b_0
 + \left( \beta_p -1 -
 \frac{\pi}{\theta_0}+\partial_x \right) \varphi_1b_1 \\
 &\quad + \left( \beta_p -1 - \frac{2
 \pi}{\theta_0}+\partial_x \right)\varphi_2b_2  
\end{split} 
\end{equation}
where
\begin{equation}\label{defb02}
	\{b_0,b_1,b_2\}:=
	\left\{\frac1{\sqrt{\theta_0}},\,
	\frac{\cos(\frac{\pi}{\theta_0}\cdot)}{\sqrt{\theta_0}},\,
	\frac{\cos(\frac{2\pi}{\theta_0}
	\cdot)}{\sqrt{\theta_0}}\right\}
\end{equation}
is linearly independent in $L^p(I,\R)$. 
We set 
\[
	F_3^p:=\la b_0,b_1,b_2\ra. 
\]
The form of the coefficients in (\ref{divvpcalc}) is
\[
	(s_j+\partial_x)\vp_j,\quad s_j\in\R,\ j=0,1,2.
\]
Observe that depending on the values of $p$ and $\theta_0$
it can occur $s_j=0$. Thus,
in order to estimate expression (\ref{divvpcalc}) by 
$\vp_j$ from below we distinguish two cases: $s_j\neq 0$
for all $j=0,1,2$ or $s_j=0$ for one $j\in\{0,1,2\}$.

{\bf Step~2.1.} The case $s_j\neq 0$ for all $j=0,1,2$. Then we have 
\begin{equation}\label{isosjn0}
	s_j+\partial_x\in \sLis(L^p(\R),W^{-1,p}(\R)).
\end{equation}
Furthermore, since $F^{p'}_3$ is finite dimensional, we observe that 
$W^{1,p'}(\R,F^{p'}_3)$ is isomorphic to the space
\[
	W^{1,p'}(\R,F^{p'}_3)\cap L^{p'}\bigl(\R,W^{1,p'}(I,\R)\bigr).
\]
This implies that the norm of $W^{1,p'}(\R,F^{p'}_3)$ 
and the norm of $W^{1,p'}(\Omega,\R)$
are equivalent on $W^{1,p'}(\R,F^{p'}_3)$ and that
the latter space can be regarded as a closed subspace of 
$W^{1,p'}(\Omega,\R)$. Utilizing these facts, we can estimate as
\begin{align*}
	\|\vp_j\|_p&\le C\|(s_j+\partial_x)\vp_j\|_{W^{-1,p}(\R)}\\
	&\le C\bigl\|\sum_{j=0}^2(s_j+\partial_x)\vp_jb_j
	     \bigr\|_{W^{-1,p}(\R,F^p_3)}
	=C\|\div_{\widetilde\Theta}\vp\|_{W^{-1,p}(\R,F^{p}_3)}\\
	&=C \sup_{0\neq h\in W^{1,p'}(\R,F^{p'}_3)}
	    \frac{|\langle h,\,\div_{\widetilde\Theta}\vp
	    \rangle|}{\|h\|_{W^{1,p'}(\R,F^{p'}_3)}}\\
	&\le C \sup_{0\neq h\in W^{1,p'}(\Omega,\R)}
	    \frac{|\langle h,\,\div_{\widetilde\Theta}\vp
	    \rangle|}{\|h\|_{W^{1,p'}(\Omega,\R)}}
	    =\|\div_{\widetilde\Theta} \vp\|_{W^{-1,p}_0(\Omega,\R)},
\end{align*}
for $j=0,1,2$ with $C>0$ independent of $\vp$ and where
$W^{-1,p}_0(\Omega,\R)=(W^{1,p'}(\Omega,\R))'$.

{\bf Step~2.2.} The case $s_\ell=0$ for one $\ell\in\{0,1,2\}$. 
This case is more involved, since here we have 
\[
	s_\ell+\partial_x=\partial_x\in \sLis(L^p(\R),\hW^{-1,p}(\R)),
\]
whereas for the remaining $j\in \{0,1,2\}\setminus\{\ell\}$
we still have (\ref{isosjn0}). We set 
\begin{equation}\label{defujsp}
	U_j:=
	\left\{
	\begin{array}{rl}
		\hW^{-1,p}(\R,\la b_j\ra),& \text{if}\ j=\ell,\\
		W^{-1,p}(\R,\la b_j\ra),& 
		\text{if}\ j\in \{0,1,2\}\setminus\{\ell\},
	\end{array}
	\right.
\end{equation}
and
\begin{equation}\label{defvsp}
	V:=\overline{\div_{\widetilde\Theta}
	L^p(\R,E^p_3)}^{W^{-1,p}(\Omega,\R)}.
\end{equation}
In Lemma~\ref{divopbddhat} below it is proved that 
the sum of $U_0\oplus U_1\oplus U_2$ and $V$ is direct 
and consequently that 
\[
	U_0\oplus U_1\oplus U_2\oplus V,
	\quad
	\|\cdot\|_{U_0\oplus U_1\oplus U_2\oplus V}
	:=\|\cdot\|_{U_0}
	+\|\cdot\|_{U_1}
	+\|\cdot\|_{U_2}
	+\|\cdot\|_{V}
\]
is a Banach space. Then, this time we obtain 
\begin{align*}
	\|\vp_j\|_p&\le C\|(s_j+\partial_x)\vp_jb_j\|_{U_j}\\
	&\le C\bigl\|\sum_{j=0}^2(s_j+\partial_x)\vp_jb_j
	     \bigr\|_{U_0\oplus U_1\oplus U_2}
	=C\|\div_{\widetilde\Theta}\vp\|_{U_0\oplus U_1\oplus U_2}\\
	&\le C\|\div_{\widetilde\Theta} \vp\|_{U_0\oplus U_1\oplus U_2
	      \oplus V}
\end{align*}
for $j=0,1,2$ with $C>0$ independent of $\vp$.

{\bf Step~3.} 
Now, let $u\in L^p_\sigma(G)$ and $\vp \in
L^p(\mathbb{R},\la e_0,e_1,e_2\ra)$ 
such that $\widetilde\mQ u=u-\widetilde\Theta^p_*\vp$.
Observe that both,
\[
	\div_{\widetilde\Theta}:L^p(\Omega,\R^2)\to W^{-1,p}_0(\Omega,\R)
\]
and by Lemma~\ref{divopbddhat} also
\[
	\div_{\widetilde\Theta}:L^p(\Omega,\R^2)
	\to U_0\oplus U_1\oplus U_2\oplus V
\]
are bounded operators.
By the fact that $\div_{\widetilde\Theta} \widetilde\Theta^*_p u=0$,
we can continue the calculations in steps~2.1 and 2.2 
to the result that
\begin{equation*}\label{1estvpj}
\begin{split}
	\|\vp_j\|_p
	&\le C\|\div_{\widetilde\Theta}\,\vp\|_{\mathcal W}
	= C\|\div_{\widetilde\Theta}(\widetilde\Theta^*_p u-
	\vp)\|_{\mathcal W}\\
	&\le C\|u-\widetilde\Theta^p_*\vp\|_{L^p(G,\R^2)}
	=C\|\widetilde\mQ u\|_p\quad (j=0,1,2),
\end{split}
\end{equation*}
where $\mathcal W$ denotes either the space $W^{-1,p}_0(\Omega,\R)$
or the space $U_0\oplus U_1\oplus U_2\oplus V$, depending on whether 
we have $s_j\neq 0$ for all $j$ or $s_j=0$ for one $j$.
Summing up over $j$ yields
\[
	\|\vp\|_{p}
	=\|\vp\|_{L^p(\R,\la e_0,e_1,e_2\ra)}
	\le C\sum_{j=0}^2\|\vp_j\|_p
	\le C\|\widetilde\mQ u\|_p
\]
for all $u\in L^p_\sigma(G)$ and 
$\widetilde\Theta^p_*\vp=(1-\widetilde\mQ)u$. By the fact that 
\[
	C_0\|\vp\|_{L^p(\Omega,\R^2)}
	\ge\|\widetilde\Theta^p_*\vp\|_{L^p(G,\R^2)}
	=\|u-\widetilde\mQ u\|_p
	\ge \|u\|_p-\|\widetilde\mQ u\|_p
\]
we arrive at (1) by setting $\delta:=1/(C_0C+1)$. 

{\em Proof of (2).} The proof of (2) is in large parts similar to
the proof of (1). Hence we will be briefer in detail.

{\bf Step~1.}
Again we will first provide estimates for 
$\vp\in (1-\mP_3)\Theta^*_pD(B_p)$ in terms of the transformed 
divergence. Note that such a $\vp$ is still 
represented by (\ref{lincombvp}), but now with coefficients
$\vp_j\in W^{2,p}(\R)$.
The transformed divergence operator here is
\[
	\mathrm{div}\, \Theta_*^p v\circ\psi
	=e^{(\beta_p-1)x}\left((\beta_p+1+\partial_x)v_x+\partial_\theta
	v_\theta\right)=:e^{(\beta_p-1)x}\div_{\Theta}\,v.
\]
Consequently, 
\begin{equation}\label{divvpcalcot}
\begin{split}
 \div_{\Theta}\,\vp
 & =\left(\beta_p +1+\partial_x\right)\varphi_0b_0
 + \left( \beta_p +1 -
 \frac{\pi}{\theta_0}+\partial_x \right) \varphi_1b_1 \\
 &+ \left( \beta_p +1 - \frac{2
 \pi}{\theta_0}+\partial_x \right)\varphi_2b_2  
\end{split} 
\end{equation}
for $\vp\in (1-\mP_3)\Theta^*_pD(B_p)$. Again we write
the coefficients as $(s_j+\partial_x)\vp_j$. Here still 
$s_1$ and $s_2$ can vanish. Hence we again distinguish 
the two cases: $s_j\neq 0$
for all $j=0,1,2$ or $s_j=0$ for one $j\in\{1,2\}$.

{\bf Step~1.1.} For the case $s_j\neq 0$ for all $j=0,1,2$ we use
\[
	s_j+\partial_x\in \sLis(W^{2,p}(\R),W^{1,p}(\R))
\]
in order to deduce
\begin{align*}
	\|\vp_j\|_{W^{2,p}(\R)}
	&\le C\|(s_j+\partial_x)\vp_j\|_{W^{1,p}(\R)}\\
	&\le C\bigl\|\sum_{j=0}^2(s_j+\partial_x)\vp_jb_j
	     \bigr\|_{W^{1,p}(\R,F^p_3)}
	\le C\|\div_{\Theta} \vp\|_{W^{1,p}(\Omega,\R)}
\end{align*}
for $j=0,1,2$ with $C>0$ independent of $\vp$.

{\bf Step~1.2.}  If $s_\ell=0$ for one $\ell\in\{1,2\}$ we use
for that $\ell$,
\[
	s_\ell+\partial_x=\partial_x\in \sLis(\hW^{2,p}(\R),\hW^{1,p}(\R))
\]
to estimate
\begin{align*}
	\|\vp_\ell\|_{\hW^{2,p}(\R)}
	&\le C\|(s_\ell+\partial_x)\vp_\ell\|_{\hW^{1,p}(\R)}
	\le C\|(s_\ell+\partial_x)\vp_\ell\|_{W^{1,p}(\R)}\\
	&\le C\bigl\|\sum_{j=0}^2(s_j+\partial_x)\vp_jb_j
	     \bigr\|_{W^{1,p}(\R,F^p_3)}
	\le C\|\div_{\Theta} \vp\|_{W^{1,p}(\Omega,\R)}
\end{align*}
with $C>0$ independent of $\vp$.
The corresponding estimate for $\vp$ in the $L^p$-norm can be 
established completely analogous as in step~2.2 of the proof of (1).
In this regard, observe that all assertions there as well as of 
Lemma~\ref{divopbddhat} obviously remain true, if we replace
$\div_{\widetilde\Theta}$ by $\div_{\Theta}$. Hence we obtain
\begin{align*}
	\|\vp_\ell\|_{L^p(\R)}
	&\le C\|\div_{\Theta} \vp\|_{U_0\oplus U_1\oplus U_2
	      \oplus V}.
\end{align*}
Taking into account the well-known interpolation estimate
$\|\nabla v\|_{L^p(\R)}
\le C(\|\nabla^2 v\|_{L^p(\R)}+\|v\|_{L^p(\R)})$, altogether 
we have
\begin{align*}
	\|\vp_j\|_{W^{2,p}(\R)}
	&\le C\left(\|\div_{\Theta} \vp\|_{W^{1,p}(\Omega,\R)}
	+\|\div_{\Theta} \vp\|_{U_0\oplus U_1\oplus U_2
	      \oplus V}\right)
\end{align*}
for $j=0,1,2$ with $C>0$ independent of $\vp$.

{\bf Step~2.} Let $u\in D(B_p)$ with $\div\, u=0$ and 
$\vp \in (1-\mP_3)\Theta^*_pD(B_p)$ 
such that $\mQ u=u-\Theta^p_*\vp$. Thanks to Lemma~\ref{divopbddhat}
and since
\[
	\div_{\Theta}:W^{2,p}(\Omega,\R^2)\to W^{1,p}(\Omega,\R)
\]
is bounded, by virtue of $\div_\Theta \Theta^*_pu=0$ and 
the estimates in Steps~1.1 and 1.2 we conclude
\begin{equation*}\label{1estvpjda}
\begin{split}
	\|\vp_j\|_{W^{2,p}(\R)}
	&\le C\left(\|\div_{\Theta}\,\vp\|_{W^{1,p}(\Omega,\R)}
	+\|\div_{\Theta} \vp\|_{U_0\oplus U_1\oplus U_2
	      \oplus V}\right)\\
	&\le C\|\Theta^*_p u-\vp\|_{W^{2,p}(\Omega,\R)}\\
	&\le C\|u-\Theta^p_*\vp\|_{K_p^2(G,\R^2)}
	=C\|\mQ u\|_{K_p^2(G,\R^2)}\quad (j=0,1,2).
\end{split}
\end{equation*}
Summing up over $j$, analogous to step~3 of the proof of (1) we arrive at
(2). 

\medskip
{\em Proof of (3).} According to Lemma~\ref{lempartap}(1), 
$\|\cdot\|_p+\|\cdot\|_{K^2_p}$ is an equivalent norm on $D(A_p)$
and we have $\mQ=\widetilde\mQ$ on $D(A_p)$.
The estimates proved in (1) and (2) then yield 
\begin{align*}
	\|u\|_{D(A_p)}&\le C\left(\|u\|_p+\|u\|_{K^2_p}\right)
	\le C\left(\|\mQ u\|_p+\|\mQ u\|_{K^2_p}\right)\\
	&\le C\|\mQ u\|_{D(A_p)}
	\quad (u\in D(A_S)).
\end{align*}
The proof is now completed.
\end{proof}

We have used the following facts in the proof of 
Proposition~\ref{closedrangeq}.
\begin{lemma}\label{divopbddhat}
Let $1 <p< \infty$. Let $U_j$, $j=0,1,2$, $\div_{\widetilde\Theta}$, and
$V$ be as defined in the proof of Proposition~\ref{closedrangeq}(1).
Then $U_0,U_1,U_2,V$ are Banach spaces, their sum is direct, and
we have
\begin{equation}\label{bdddivsum}
	\div_{\widetilde\Theta}\in\sL\left(L^p(\Omega,\R^2),\,
	U_0\oplus U_1\oplus U_2\oplus V\right).
\end{equation}
\end{lemma}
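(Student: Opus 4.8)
The plan is to prove the three assertions of the lemma in order: first that $U_0,U_1,U_2,V$ are complete, then that their sum is direct, and finally the bound \eqref{bdddivsum}. Only the directness requires genuine work.

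Completeness is routine. For $j\neq\ell$ the assignment $g\mapsto g\,b_j$ is an isometric isomorphism of $W^{-1,p}(\R)=\bigl(W^{1,p'}(\R)\bigr)'$ onto $U_j$, so $U_j$ is Banach; for $j=\ell$ the same assignment identifies $U_\ell$ with $\hW^{-1,p}(\R)$, and since $\partial_x$ is injective on $L^p(\R)$ — the only constant function lying in $L^p(\R)$ being $0$ — the map $\partial_x f\mapsto f$ is, with the canonical norm, an isometric isomorphism of $\hW^{-1,p}(\R)$ onto $L^p(\R)$, so $U_\ell$ is complete. The space $V$ is by construction a closed subspace of the Banach space $W^{-1,p}(\Omega,\R)$, hence Banach. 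Granting the directness of the sum, $U_0\oplus U_1\oplus U_2\oplus V$ equipped with $\|\cdot\|_{U_0}+\|\cdot\|_{U_1}+\|\cdot\|_{U_2}+\|\cdot\|_V$ is then complete, being a finite $\ell^1$-sum of Banach spaces.

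For the directness I would work with the expansion in the variable $\theta$ along the cosine system $\{\cos(k\pi\theta/\theta_0)\}_{k\geq0}$, whose first three members are multiples of $b_0,b_1,b_2$. For each $k$ the $L^2(I)$-orthogonal projection $\pi_k$ onto $\langle\cos(k\pi\theta/\theta_0)\rangle$ is rank one with smooth $\theta$-kernel, so $\mathrm{id}_\R\otimes\pi_k$ extends to a bounded projection on $W^{s,p}(\Omega,\R)$ for $s\in[-1,1]$, in particular on $W^{-1,p}(\Omega,\R)$; applying $\pi_0,\pi_1,\pi_2$ already shows that $U_0,U_1,U_2$ sit in pairwise disjoint mode ranges and that the coordinate projections of $U_0\oplus U_1\oplus U_2$ are bounded. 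To bring in $V$ I would use the explicit formula \eqref{divvpcalc}: on an eigenfunction of $\mathcal T_{p,\theta}$ the transformed divergence $\div_{\widetilde\Theta}$ produces a single cosine mode, and on the excluded eigenfunctions $e_0,e_1,e_2$ — whose span $\mathbb{P}_3$ kills — it produces exactly the terms $(s_j+\partial_x)\vp_j\,b_j$, where each $s_j+\partial_x$ is an isomorphism from $L^p(\R)$ onto $W^{-1,p}(\R)$ (respectively onto $\hW^{-1,p}(\R)$ when $s_\ell=0$). From this one reads off that the decomposition of $\div_{\widetilde\Theta}$ induced by the splitting through $\mathbb{P}_3$ is forced, so that a relation $u_0+u_1+u_2+v=0$ with $u_j\in U_j$ and $v\in V$ gives, after applying $\pi_0,\pi_1,\pi_2$ and using invertibility of the $s_j+\partial_x$ (with due attention to the homogeneous norm when $s_\ell=0$), first $u_0=u_1=u_2=0$ and then $v=0$. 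This is the step I expect to be the main obstacle, since it rests entirely on the special behaviour of the removed modes under $\div_{\widetilde\Theta}$ and on keeping the $\hW^{-1,p}$- and $W^{-1,p}$-norms separate.

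For the bound \eqref{bdddivsum}, given $w\in L^p(\Omega,\R^2)$ I would split $w=(1-\mathbb{P}_3)w+\mathbb{P}_3w$. Writing $(1-\mathbb{P}_3)w=\sum_{j=0}^2\vp_j e_j$ with $\|\vp_j\|_{L^p(\R)}\leq C\|w\|_{L^p(\Omega)}$, formula \eqref{divvpcalc} gives $\div_{\widetilde\Theta}(1-\mathbb{P}_3)w=\sum_{j=0}^2(s_j+\partial_x)\vp_j\,b_j$; since $s_j+\partial_x\in\sL\bigl(L^p(\R),W^{-1,p}(\R)\bigr)$, and $\partial_x\in\sL\bigl(L^p(\R),\hW^{-1,p}(\R)\bigr)$ in the degenerate case, each summand lies in $U_j$ with norm $\leq C\|w\|_{L^p(\Omega)}$. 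The remaining term satisfies $\div_{\widetilde\Theta}\mathbb{P}_3w\in\div_{\widetilde\Theta}L^p(\R,E^p_3)\subset V$ and, as $\div_{\widetilde\Theta}$ is first order, $\|\div_{\widetilde\Theta}\mathbb{P}_3w\|_{W^{-1,p}(\Omega,\R)}\leq C\|\mathbb{P}_3w\|_{L^p(\Omega)}\leq C\|w\|_{L^p(\Omega)}$. Adding the four contributions yields a bounded linear map $L^p(\Omega,\R^2)\to U_0\oplus U_1\oplus U_2\oplus V$ whose coordinate sum is $\div_{\widetilde\Theta}w$, which is \eqref{bdddivsum}.
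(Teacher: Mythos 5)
Your proposal is correct and follows essentially the same route as the paper: the paper's key device is the bounded projector $\cQ_3=\sum_{j=0}^2(\cdot,b_j)b_j$ on $W^{-1,p}_0(\Omega,\R)$ (obtained by duality from its boundedness on $W^{1,p'}$), which annihilates $V$ by the eigenfunction expansion of $\div_{\widetilde\Theta}$ on $L^p(\R,E^p_3)$ and restricts to the identity on $U_0\oplus U_1\oplus U_2$ via $\hW^{-1,p}\hook W^{-1,p}$ — exactly your rank-one projections $\pi_0,\pi_1,\pi_2$ combined into one operator. The completeness and boundedness arguments (splitting through $\mathbb P_3$ and using $s_j+\partial_x\in\sL(L^p,W^{-1,p})$ resp. $\partial_x\in\sL(L^p,\hW^{-1,p})$) also coincide with the paper's.
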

\begin{proof}
By their definition (\ref{defujsp}) and (\ref{defvsp}) it is obvious that 
$U_0,U_1,U_2,V$ are Banach spaces and that the sum of $U_0,U_1,U_2$
is direct. Note that 
\[
	L^p(\Omega,\R^2)=L^p(\R,E^p_3)
	\oplus L^p(\R,\la e_0\ra)
	\oplus L^p(\R,\la e_1\ra)
	\oplus L^p(\R,\la e_2\ra).
\]
It is also obvious that 
$\div_{\widetilde\Theta}:L^p(\R,\la e_j\ra)\to U_j$ and hence also
\begin{equation}\label{bdddivvc}
	\div_{\widetilde\Theta}:L^p(\R,\la e_0,e_1,e_2\ra)
	\to U_0\oplus U_1\oplus U_2
\end{equation}
is bounded (even isomorphic due to the estimates for $\vp$ in
steps~2.1 and 2.2 of the proof of Proposition~\ref{closedrangeq}). Due to 
$\div_{\widetilde\Theta}\in\sL\bigl(L^p(\Omega,\R^2),W^{-1,p}(\Omega,\R)\bigr)$ 
we see that by definition of $V$ the operator 
\begin{equation}\label{bdddivv}
	\div_{\widetilde\Theta}:L^p(\R,E^p_3)\to V
\end{equation}
is bounded too.
It remains to prove that the sum of $V$ and $U_0\oplus U_1\oplus U_2$ 
is direct. 

To this end, denote by 
$\cQ_3:W^{1,p'}(\Omega,\R)\to W^{1,p'}(\Omega,\R)$ the projector 
\[
	\cQ_3 v:=\sum_{j=0}^2(v,b_j)b_j,\quad v\in W^{1,p'}(\Omega,\R)
\]
with $b_j$, $j=0,1,2$, be defined as in (\ref{defb02}). 
Writing
\[
	W^{1,p'}(\Omega,\R)
	=W^{1,p'}(\R,L^{p'}(I,\R))\cap L^{p'}(\R,W^{1,p'}(I,\R))
\]
it is easily seen that $\cQ_3$ is a bounded projector
onto $W^{1,p'}(\R,F^{p'}_3)$. 
Note that $(b_k)_{k=0}^\infty$ with 
$b_k(\theta)=\cos(k\pi\theta/\theta_0)/\sqrt{\theta_0}$ 
as the collection of eigenfunctions of the
Neumann-Laplacian on the interval $I=(0,\theta_0)$ forms 
an orthonormal Hilbert basis of $L^2(I,\R)$. This shows that $\cQ_3$ is
symmetric, hence $\cQ_3$ is a bounded projector on
\[
	W^{-1,p}_0(\Omega,\R)
	=(W^{1,p'}_0(\Omega,\R))'
	=W^{-1,p}(\R,L^p(I,\R))+ L^{p}(\R,W^{-1,p}_0(I,\R)),
\]
too. Since all norms on $F^p_3$ are equivalent, for its image we calculate
\begin{equation}\label{projqw-1}
\begin{split}
	\cQ_3W^{-1,p}_0(\Omega,\R)
	&=W^{-1,p}(\R,F^p_3)+ L^{p}(\R,F^p_3)
	=W^{-1,p}(\R,F^p_3)\\
	&=W^{-1,p}(\R,\la b_0\ra)\oplus W^{-1,p}(\R,\la b_1\ra)
	  \oplus  W^{-1,p}(\R,\la b_2\ra).
\end{split}
\end{equation}

We next show that $V\subset (1-\cQ_3)W^{-1,p}_0(\Omega,\R)$.
By the fact that $(e_k)_{k=0}^\infty$ forms a basis of $L^2(I,\R^2)$
(see (\ref{spectt}) and the subsequent lines),
every $v\in L^2(\R,E^2_3)$ is represented as 
$v=\sum_{k=3}^\infty v_ke_k$ with $(v_k)\subset L^2(\R)$.
Hence we obtain
\begin{align*}
	\div_{\widetilde\Theta}v
	&=\sum_{k=3}^\infty 
	(\beta_2-1+\partial_x)v_k e_k^1
	+v_k \partial_\theta e_k^2\\
	&=\sum_{k=3}^\infty 
	\left(\beta_2-1\pm\frac{k\pi}{\theta_0}
	+\partial_x\right)v_k b_k.
\end{align*}
This shows that
\[
	\cQ_3\div_{\widetilde\Theta}v=0
	\quad \bigl(v\in L^p(\R,E^p_3)\cap L^2(\R,E^2_3)\bigr).
\]
The boundedness of the operators $\div_{\widetilde\Theta},\cQ_3$ 
and a density argument yield that this identity
remains true for all $v\in L^p(\R,E^p_3)$. Once more the 
boundedness of $\cQ_3$ on $W^{-1,p}_0(\Omega,\R)$
then gives $V\subset (1-\cQ_3)W^{-1,p}_0(\Omega,\R)$.

Finally, $W^{1,p}(\R,\la b_j\ra)\hookd \hW^{1,p}(\R,\la b_j\ra)$ 
implies
\[
	\hW^{-1,p}(\R,\la b_j\ra)\hook W^{-1,p}(\R,\la b_j\ra).
\]
In combination with (\ref{projqw-1}) this gives 
\[
	U_0\oplus U_1\oplus U_2\subset \cQ_3 W^{-1,p}_0(\Omega,\R),
\]
hence $V\cap \bigl(U_0\oplus U_1\oplus U_2\bigr)=\{0\}$.

Since we equip $U_0\oplus U_1\oplus U_2\oplus V$ with the norm
$\|\cdot\|_{U_0\oplus U_1\oplus U_2\oplus V}
	:=\|\cdot\|_{U_0}
	+\|\cdot\|_{U_1}
	+\|\cdot\|_{U_2}
	+\|\cdot\|_{V}$,
relations (\ref{bdddivvc}) and (\ref{bdddivv}) result in
(\ref{bdddivsum}). Now all assertions are proved.
\end{proof}

\begin{corollary}\label{closedrangeqcor}
Let $1 <p< \infty$. Then we have that
\begin{enumerate}
\item $\widetilde \mQ L^p_\sigma(G)$ is closed in
$\mL^p$ and $\widetilde\mQ\in \sLis\left(L^p_\sigma(G),\,
\widetilde \mQ L^p_\sigma(G)\right)$,
\item $\mQ D_\sigma$ is closed in
$D(\mB_p)$ and $\mQ\in \sLis\left(D_\sigma,\,\mQ D_\sigma\right)$,
where $D_\sigma:=\{v\in D(\mB_p):\ \div\, v=0\}$, and 
\item $\mQ D(A_S)$ is closed in
$D(\mA_p)$ and $\mQ\in \sLis\left(D(A_S),\,\mQ D(A_S)\right)$.
\end{enumerate}
\end{corollary}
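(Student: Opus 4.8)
The plan is to reduce all three assertions to the lower bounds already established in Proposition~\ref{closedrangeq}, via the elementary fact that a bounded linear operator which is bounded below has closed range and is an isomorphism onto its image. Precisely: if $X,Y$ are Banach spaces, $T\in\sL(X,Y)$, and $\|Tx\|_Y\ge\delta\|x\|_X$ for all $x\in X$ with some $\delta>0$, then $T$ is injective, $T(X)$ is closed in $Y$, and $T\in\sLis(X,T(X))$ with $\|T^{-1}\|\le\delta^{-1}$; indeed, if $Tx_n\to y$ then $(x_n)$ is Cauchy in $X$, and its limit $x$ satisfies $Tx=y$ by continuity. Thus in each case it only remains to identify the relevant Banach space $X$, to note that the restriction of $\widetilde\mQ$ (resp.\ $\mQ$) is a bounded operator from $X$ into the stated target space, and to quote the corresponding part of Proposition~\ref{closedrangeq}.

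For (1) I take $X=L^p_\sigma(G)$, which by \eqref{defsolss} is a closed subspace of $L^p(G,\R^2)$, hence a Banach space. By \eqref{defprojq} and the lines following it, $\widetilde\mQ$ is a bounded projector on $L^p(G,\R^2)$ with range $\mL^p$, so its restriction lies in $\sL\bigl(L^p_\sigma(G),\mL^p\bigr)$. Proposition~\ref{closedrangeq}(1) states it is bounded below on $L^p_\sigma(G)$, and the principle above then gives that $\widetilde\mQ L^p_\sigma(G)$ is closed in $L^p(G,\R^2)$, hence in the closed subspace $\mL^p$, and that $\widetilde\mQ\in\sLis\bigl(L^p_\sigma(G),\widetilde\mQ L^p_\sigma(G)\bigr)$.

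For (2) and (3) the only additional point is completeness of the domain under the graph norm. Recall we are in the case $\gamma=0$, so $D(B_p)$ carries the norm $\|\cdot\|_{K^2_p}$ and $D(A_p)$ the equivalent norm $\|\cdot\|_p+\|\cdot\|_{K^2_p}$ (Lemma~\ref{lempartap}(1)), and on each of these $\div$ acts as a bounded first-order operator. Hence $D_\sigma$ is a closed subspace of $D(B_p)$ (the kernel of a bounded operator, intersected with the closed subspace $D(\mB_p)=\mQ D(B_p)$), and $D(A_S)=D(A_p)\cap L^p_\sigma(G)$ (Lemma~\ref{lm:4.1}) is closed in $D(A_p)$ since $L^p_\sigma(G)$ is closed in $L^p(G,\R^2)$ and $D(A_p)\hook L^p(G,\R^2)$; both are therefore Banach spaces. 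Now $\mQ$ is a bounded projector on $D(B_p)$ with range $D(\mB_p)$ (Proposition~\ref{proppropq}) and a bounded projector on $D(A_p)$ with range $D(\mA_p)$ (Lemma~\ref{lempartap}(1),(4)), so restricting it yields bounded operators $D_\sigma\to D(\mB_p)$ and $D(A_S)\to D(\mA_p)$. Proposition~\ref{closedrangeq}(2) and (3) supply the respective lower bounds, and the same principle gives that $\mQ D_\sigma$ is closed in $D(\mB_p)$ with $\mQ\in\sLis(D_\sigma,\mQ D_\sigma)$, and likewise $\mQ D(A_S)$ closed in $D(\mA_p)$ with $\mQ\in\sLis(D(A_S),\mQ D(A_S))$.

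The proof is short precisely because all the analytic substance has been moved into Proposition~\ref{closedrangeq} (and Lemma~\ref{divopbddhat}), where the compatibility of the complemented subspace with the divergence operator is exploited. The only genuine care needed here is bookkeeping: checking that the restricted projectors really land in the smaller codomains $\mL^p$, $D(\mB_p)$, $D(\mA_p)$, and that $D_\sigma$ and $D(A_S)$ are complete in their graph norms, so that "bounded below $\Rightarrow$ closed range" applies. I expect no obstacle beyond assembling these routine facts.
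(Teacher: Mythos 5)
Your proof is correct and is exactly the argument the paper intends: the corollary is stated there without proof as an immediate consequence of Proposition~\ref{closedrangeq}, via the standard fact that a bounded operator between Banach spaces which is bounded below is an isomorphism onto its (necessarily closed) range, and you correctly supply the needed completeness checks for $L^p_\sigma(G)$, $D_\sigma$ and $D(A_S)$ and the mapping properties of the projectors. (Note only that the paper's definition $D_\sigma=\{v\in D(\mB_p):\ \div\, v=0\}$ is presumably a misprint for $D(B_p)$ --- otherwise $\mQ|_{D_\sigma}$ is the identity and (2) is vacuous --- but your argument covers either reading, since the lower bound of Proposition~\ref{closedrangeq}(2) holds on the larger set.)
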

With these facts at hand we can prove our main result on the 
Stokes operator.
\begin{proof}[Proof of Theorem~\ref{main2}]
Assume that $\lambda\in\rho(A_p)$.
By the fact that $A_S$ is the part of $A_p$ from Lemma~\ref{lm:4.1}
we infer that 
\[
	(\lambda-A_S)^{-1}
	=(\lambda-A_p)^{-1}|_{L^p_\sigma(G)}.
\]
In combination with Lemma~\ref{lempartap}(3),(4) this implies
\[
	\mQ(\lambda-A_S)^{-1}u=(\lambda-\mA_p)^{-1}\widetilde\mQ u
	\quad (u\in D(A_S)).
\]
In particular, the above line yields $(\lambda-\mA_p)^{-1}\widetilde\mQ
L^p_\sigma(G)\subset \mQ D(A_S)$.
Thus, thanks to Corollary~\ref{closedrangeqcor} we conclude that
\begin{equation}\label{crurepresas}
	(\lambda-A_S)^{-1}f
	=\mQ^{-1}(\lambda-\mA_p)^{-1}\widetilde\mQ f
	\quad (L^p_\sigma(G)).
\end{equation}

For $1<p<1+\delta$ with $\delta>0$ given in Theorem~\ref{thmmasa}
we know by that result that the resolvent set of $A_p$ contains
a suitable sector.
For those $p$ the assertion hence follows from 
Corollary~\ref{closedrangeqcor} and Theorem~\ref{mainlap}. 
For general $p\in(1,\infty)$ representation
(\ref{crurepresas}) gives a candidate for the resolvent of 
$A_S$. In fact, choosing $1<q<1+\delta$, on   
$L^p_\sigma(G)\cap L^q_\sigma(G)$ we already know that
it is the resolvent. A density argument and again 
Corollary~\ref{closedrangeqcor} and Theorem~\ref{mainlap}
then yield the assertion. 
\end{proof}
\begin{remark}\label{stokes_re_cons}
From Proposition~\ref{proppropq}(1) and 
Theorem~\ref{korollar_ergebnis_paper}
it also follows consistency of the resolvent of
$A_S$, that is, for every $\lambda\in\rho(A_S)$ the family
	$\left((\lambda-A_S)^{-1}\right)_{1<p<\infty}$ is consistent
	on the scale $\left(L^p_\sigma(G)\right)_{1<p<\infty}$.
\end{remark}

Finally we prove our third main result.
\begin{proof}[Proof of Theorem~\ref{korollarmainstokes}]
We follow the strategy in the proof of Theorem~\ref{main2}.
For $f\in L^p_\sigma(G)$ the candidate for the solution of  
\begin{equation}
	 	\left.
	 	\begin{array}{r@{\ =\ }lll}
	 		- \Delta u+\nabla \pi &  f &
			\text{in} & G, \\
	 		\div\, u&0&
			\text{in} & G, \\
			\text{curl}\, u=0, \ u \cdot \nu &0 &
			\text{on} & \partial G 
	 	\end{array}
	 	\right\}
	 \label{stokequ}
\end{equation}
is given as $\pi=0$ and $u=\mQ^{-1}\mB_p^{-1}\widetilde\mQ f$.
Thanks to Proposition~\ref{proppropq} and Corollary~\ref{closedrangeqcor} 
it remains to show that $\div\, u=0$. This, in turn, follows
from Proposition~\ref{mainlapconv}, 
$\mQ^{-1}(\lambda-\mA_p)^{-1}\widetilde\mQ f\subset D(A_S)$, 
and the fact that the operator
$\div$ acts continuously on the space $K_p^2(G,\R^2)$.
\end{proof}

\appendix

\section{Elements from harmonic and functional analysis}\label{appa}

The following facts might be well-known. Since we could not
find an appropriate reference, we give their proofs here.
\begin{lemma}\label{denselem}
Let $X,Y$ be Banach spaces such that $X\hook Y$. 
Then we have 
\[
	C^\infty_c(\R,X)\hookd W^{k,p}(\R,X)\cap W^{\ell,p}(\R,Y)
\]
for every $k,\ell\in\N_0$ and $p\in(1,\infty)$.
\end{lemma}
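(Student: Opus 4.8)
The statement is a density-and-intersection fact for vector-valued Sobolev spaces, and the natural approach is a two-step truncation-and-mollification argument, carried out directly on $\R$ so that no boundary issues arise. First I would fix $f$ in $Z:=W^{k,p}(\R,X)\cap W^{\ell,p}(\R,Y)$, equipped with the norm $\|f\|_Z:=\|f\|_{W^{k,p}(\R,X)}+\|f\|_{W^{\ell,p}(\R,Y)}$, and note that $C^\infty_c(\R,X)\subset Z$ because $X\hook Y$ forces every $W^{k,p}(\R,X)$-function to lie in $W^{\min(k,\ell),p}(\R,Y)$ on compact support, and a smooth compactly supported $X$-valued function has derivatives of all orders. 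So the content is the density.

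\textbf{Step 1: truncation.} Pick a fixed cutoff $\chi\in C^\infty_c(\R)$ with $\chi\equiv 1$ on $[-1,1]$ and set $\chi_R(t):=\chi(t/R)$. For $f\in Z$ I claim $\chi_R f\to f$ in $Z$ as $R\to\infty$. This is the usual computation: by the Leibniz rule $\partial^j(\chi_R f)=\sum_{i\le j}\binom{j}{i}R^{-i}\chi^{(i)}(\cdot/R)\,\partial^{j-i}f$, and the terms with $i\ge 1$ carry a factor $R^{-i}$ times a uniformly bounded multiplier supported in $R\le|t|\le CR$, hence tend to $0$ in $L^p(\R,X)$ (resp.\ $L^p(\R,Y)$) by dominated convergence; the term $i=0$ is $\chi_R\,\partial^j f\to\partial^j f$ in $L^p$, again by dominated convergence since $|\chi_R|\le\|\chi\|_\infty$ and $\chi_R\to 1$ pointwise. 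Applying this for $j\le k$ in the $X$-norm and $j\le\ell$ in the $Y$-norm gives $\chi_R f\to f$ in $Z$. Thus it suffices to approximate compactly supported elements of $Z$.

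\textbf{Step 2: mollification.} Let $g\in Z$ with $\mathrm{supp}\,g\subset[-a,a]$, and let $(\varrho_\eps)_{\eps>0}$ be a standard scalar mollifier, $\varrho_\eps(t)=\eps^{-1}\varrho(t/\eps)$ with $\varrho\in C^\infty_c(\R)$, $\int\varrho=1$. Then $g_\eps:=\varrho_\eps * g$ (Bochner convolution) is smooth, has support in $[-a-\eps,a+\eps]$, hence $g_\eps\in C^\infty_c(\R,X)$; moreover $\partial^j(g_\eps)=\varrho_\eps*\partial^j g$ so that $\partial^j g_\eps\to\partial^j g$ in $L^p(\R,X)$ for $j\le k$ and in $L^p(\R,Y)$ for $j\le\ell$, by the standard mollifier convergence theorem in Bochner $L^p$-spaces (valid for $1<p<\infty$, indeed $1\le p<\infty$). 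Hence $g_\eps\to g$ in $Z$. Chaining Steps 1 and 2 yields $C^\infty_c(\R,X)$ dense in $Z$, which is the claim.

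\textbf{Main obstacle.} There is no deep obstruction; the only point requiring a little care is the justification that $\partial^j g_\eps\to\partial^j g$ simultaneously in the $X$-topology (for $j\le k$) and the $Y$-topology (for $j\le\ell$) — that is, that one and the same mollification works for both scales. This is immediate because convolution with a scalar kernel commutes with the inclusion $X\hook Y$ and with distributional differentiation, so the $Y$-convergence for $j\le\ell$ follows from the $X$-convergence when $j\le\min(k,\ell)$ and from $g\in W^{\ell,p}(\R,Y)$ directly for $\min(k,\ell)<j\le\ell$. A second minor point is that in Step 1 the product $\chi_R\,\partial^{j-i}g$ must be differentiated only up to the order actually available in the respective scale, which is automatic since we only use $j\le k$ against the $X$-norm and $j\le\ell$ against the $Y$-norm. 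Everything else is the classical scalar argument transcribed verbatim to the Bochner setting.
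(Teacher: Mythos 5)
Your proof is correct and follows essentially the same route as the paper: the paper simply asserts the existence of a ``universal'' approximating sequence of operators, independent of the Banach space and of the Sobolev order, so that convergence holds simultaneously in $W^{k,p}(\R,X)$ and $W^{\ell,p}(\R,Y)$, and your truncation-plus-mollification construction is exactly the standard realization of that sequence. You merely make explicit what the paper leaves as ``standard,'' including the key observation that scalar convolution and cutoffs commute with the embedding $X\hook Y$.
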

\begin{proof}
First recall that
\[
	C^\infty_c(\R,E)\hookd W^{k,p}(\R,E)
\]
for every $k\in\N_0$, $p\in(1,\infty)$, and arbitrary Banach space $E$.
In fact, it is standard to construct a (universal) sequence of operators
$(\Phi_k)_{k\in\N}$ such that for $u\in W^{k,p}(\R,E)$ we have 
$(\Phi_ku)\subset C^\infty_c(\R,E)$ and 
\[
	\Phi_ku\to u\quad\text{in } W^{k,p}(\R,E)\quad (k\to\infty)
\]
for every $k\in\N_0$, $p\in(1,\infty)$, and arbitrary Banach space $E$.
Since $X\subset Y$, for $u\in W^{k,p}(\R,X)\cap W^{\ell,p}(\R,Y)$
this gives $\Phi_ku\to u$ in $W^{k,p}(\R,X)$ and in
$W^{\ell,p}(\R,Y)$. 
\end{proof}

Let $T:D(T)\subset X\to X$ be a closed, densely defined operator on a 
Banach space $X$. We denote by 
\[
	T^\sharp:X'\to D(T)' 
\]
the dual operator of $T$, regarded as a bounded operator from $D(T)$
to $X$, and by
\[
	T':D(T')\subset X'\to X' 
\]
the usual Banach space dual operator of $T$. The fact that $D(T)\subset X$ is
dense, obviously implies $D(T')\hook X'\hook D(T)'$
and that
\begin{equation}\label{dualcons}
	T^\sharp|_{D(T')}= T'.
\end{equation}
Furthermore, we have the following lemma on consistency.
\begin{lemma}\label{conslem}
Let $X$ be a reflexive Banach space and let $T:D(T)\subset X\to X$ be 
densely defined such that $T\in\sLis(D(T),X)$. Assume there is an
embedding (with means i.p.\ injection) $J:D(T)\to X'$ with dense range. 
Then there exists an embedding 
$\widetilde J:X\to D(T)'$ such that, 
if $\widetilde J\circ T\subset T^\sharp\circ J$, we have
\begin{equation}\label{consequ}
	J\circ T^{-1}\circ\widetilde J^{-1}|_{\widetilde J X\cap X'}
	=(T^\sharp)^{-1}|_{\widetilde J X\cap X'}
	=(T')^{-1}|_{\widetilde J X\cap X'}
	\quad\text{in } X'.
\end{equation}
\end{lemma}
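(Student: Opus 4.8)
The plan is to produce $\widetilde J$ as the Banach‑space adjoint of $J$, and then to read off both identities from the hypothesis $\widetilde J\circ T\subset T^\sharp\circ J$ together with $T\in\sLis(D(T),X)$.

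First I would set $\widetilde J:=J'$, the adjoint of the bounded operator $J\colon D(T)\to X'$. Since $X$ is reflexive, $J'$ maps $X''=X$ into $D(T)'$, so $\widetilde J\in\sL(X,D(T)')$; it is injective, because $\langle\widetilde Jv,u\rangle_{D(T)',D(T)}=\langle v,Ju\rangle_{X'',X'}=0$ for all $u\in D(T)$ forces $v$ to annihilate the dense subspace $J(D(T))\subset X'$, hence $v=0$. Thus $\widetilde J$ is an embedding, as required. At the same place I would record two consequences of $T\in\sLis(D(T),X)$: the adjoint $T^\sharp\colon X'\to D(T)'$ is again an isomorphism, so $(T^\sharp)^{-1}$ is defined on all of $D(T)'$; and $T$, as a closed unbounded operator, is a bijection of $D(T)$ onto $X$, so that $T'\colon D(T')\to X'$ is a bijection too.

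The second identity in \eqref{consequ} then follows at once from \eqref{dualcons}: for $g\in X'$ put $h:=(T')^{-1}g\in D(T')$; by \eqref{dualcons}, $T^\sharp h=T'h=g$, and injectivity of $T^\sharp$ gives $h=(T^\sharp)^{-1}g$, so $(T^\sharp)^{-1}g=(T')^{-1}g$ on all of $X'$, in particular on $\widetilde J X\cap X'$. For the first identity I would unpack the hypothesis: both composites $\widetilde J\circ T$ and $T^\sharp\circ J$ have domain $D(T)$, so $\widetilde J\circ T\subset T^\sharp\circ J$ means $\widetilde J Tu=T^\sharp Ju$ for every $u\in D(T)$. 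Now fix $g\in\widetilde J X\cap X'$, set $v:=\widetilde J^{-1}g\in X$ and $u:=T^{-1}v\in D(T)$; then $Tu=v$, so $T^\sharp(Ju)=\widetilde J(Tu)=\widetilde Jv=g$, and applying $(T^\sharp)^{-1}$ yields $JT^{-1}\widetilde J^{-1}g=Ju=(T^\sharp)^{-1}g$. Combining the two identities gives \eqref{consequ}.

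I do not expect a genuine obstacle here; the only delicate point is bookkeeping, namely keeping the three pairings $\langle\cdot,\cdot\rangle_{X',X}$, $\langle\cdot,\cdot\rangle_{X'',X'}$, $\langle\cdot,\cdot\rangle_{D(T)',D(T)}$ and the injective inclusions $D(T')\hookrightarrow X'\hookrightarrow D(T)'$ straight, so that $(T^\sharp)^{-1}$, $(T')^{-1}$ and $JT^{-1}\widetilde J^{-1}$ are genuinely being compared inside one and the same space $X'$. It is also worth remarking that, for the choice $\widetilde J=J'$, the hypothesis $\widetilde J\circ T\subset T^\sharp\circ J$ is exactly the symmetry relation $\langle Jw,Tu\rangle_{X',X}=\langle Ju,Tw\rangle_{X',X}$ for $u,w\in D(T)$, which is the condition one verifies in applications (cf.\ the proof of Proposition~\ref{konsistentap}).
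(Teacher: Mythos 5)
Your proposal is correct and follows essentially the same route as the paper: $\widetilde J$ is obtained as the adjoint of $J$ (injective by reflexivity and the dense range of $J$), the first identity in \eqref{consequ} comes from applying the hypothesis $\widetilde J\circ T\subset T^\sharp\circ J$ together with the invertibility of $T^\sharp$, and the second from \eqref{dualcons} and the bijectivity of $T'$. The only cosmetic difference is that the paper shows $T^\sharp(x_1-x_2)=0$ and invokes injectivity of $T^\sharp$, whereas you compute $(T^\sharp)^{-1}g$ directly; these are the same argument.
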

\begin{proof}
Since $\overline{D(T)}=X$ we have $X'\hook D(T)'$. Reflexivity of $X$  
and $J(D(T))\hookd X'$ further imply that there is an embedding
$\widetilde J:X\to D(T)'$.
Thus, $\widetilde J X\cap X'$ is well-defined and
due to $T\in\sLis(D(T),X)$ which also implies 
$T^\sharp\in\sLis(X',D(T)')$ and $T'\in\sLis(D(T'),X')$, 
line (\ref{consequ}) is meaningful.

Now, let $z\in \widetilde JX\cap X'$ and 
set $x_1:=JT^{-1}\widetilde J^{-1}z\in X'$ and
$x_2:=(T^\sharp)^{-1}z\in X'$. 
Thanks to $\widetilde J\circ T\subset T^\sharp\circ J$  
we obtain
\begin{align*}
	T^\sharp(x_1-x_2)
	&=T^\sharp\left(JT^{-1}\widetilde J^{-1}z-(T^\sharp)^{-1}z\right)\\
	&=\widetilde JTT^{-1}\widetilde J^{-1}z-T^\sharp(T^\sharp)^{-1}z=z-z=0.
\end{align*}
Thus $x_1=x_2$ in $X'$ and the assertion is proved.
The second equality in (\ref{consequ}) follows in a similar manner 
from (\ref{dualcons}).
\end{proof}


\begin{thebibliography}{10}

\bibitem{adams}
R.A. Adams and J.~J.~F. Fournier.
\newblock {\em {S}obolev {S}paces}.
\newblock Academic Press, 1975.

\bibitem{chua92}
S.-K. Chua.
\newblock Extension theorems on weighted {S}obolev spaces.
\newblock {\em Indiana Univ. Math. J.}, 41:1027--1076, 1992.

\bibitem{Coster-Nicaise: Helmholtz equation}
C. De Coster, S. Nicaise:
\newblock Singular behavior of the solution of the Helmholtz equation in weighted $L^p$-Sobolev spaces. 
\newblock {Advances Diff. Eq.} 16 (2011), 165-198.

\bibitem{Coster-Nicaise: Cauchy-Dirichtlet heat equation}
C. De Coster, S. Nicaise:
\newblock Singular behavior of the solution of the Cauchy-Dirichlet heat equation in weighted $L^p$-Sobolev spaces. 
\newblock {Bull. B.M.S.-Simon Stevin 11 (2011)}, 769-780.

\bibitem{dauge1989}
M.~Dauge.
\newblock Stationary {S}tokes and {N}avier-{S}tokes systems on two- or
  three-dimensional domains with corners. {I}. {L}inearized equations.
\newblock {\em SIAM J. Math. Anal.}, 20(1):74--97, 1989.

\bibitem{Denk-Hieber-Pruess:Maximal-Regularity}
R.~Denk, M.~Hieber, and J.~Pr{\"u}{\ss}.
\newblock {\em $\mathcal{R}$-{B}oundedness, {F}ourier-{M}ultipliers and
  {P}roblems of {E}lliptic and {P}arabolic {T}ype}, volume 166 of {\em Mem.
  Amer. Math. Soc.}
\newblock American Mathematical Society, 2003.

\bibitem{deuring1998}
P.~Deuring.
\newblock {$L^p$}-theory for the {S}tokes system in 3{D} domains with conical
  boundary points.
\newblock {\em Indiana Univ. Math. J.}, 47(1):11--47, 1998.

\bibitem{deuring2001}
P.~Deuring.
\newblock The {S}tokes resolvent in 3{D} domains with conical boundary
  points:non-regularity in {$L^p$}-spaces.
\newblock {\em Adv. Diff. Eq.}, 6:175--226, 2001.

\bibitem{Galdi2012}
G.P.~Galdi.
\newblock {\em An {I}ntroduction to the {M}athematical {T}heory 
of the {N}avier-{S}tokes {Equations}}.
\newblock Springer, 2011.

\bibitem{Grisvard}
P.~Grisvard.
\newblock {\em Elliptic {P}roblems in {N}onsmooth {D}omains}.
\newblock Society for Industrial and Applied Mathematics, 2011.

\bibitem{gs2006}
B.~Guo and C.~Schwab.
\newblock {Analytic regularity of Stokes flow on polygonal domains in countably
  weighted Sobolev spaces.}
\newblock {\em {J. Comput. Appl. Math.}}, 190(1-2):487--519, 2006.

\bibitem{Haase}
M.~Haase.
\newblock {\em The {F}unctional {C}alculus for {S}ectorial {O}perators}.
\newblock Operator Theory Advances and Applications. Springer, Basel, 2006.

\bibitem{hisa2016}
M.~Hieber and J.~Saal.
\newblock {\em The Stokes Equation in the {$L^p$}-setting: Well Posedness and
  Regularity Properties}, pages 1--88.
\newblock Handbook of Mathematical Analysis in Mechanics of Viscous Fluids.
  Springer, 2016.

\bibitem{jones81}
P.W. Jones.
\newblock Quasiconformal mappings and extendability of functions in {S}obolev
  spaces.
\newblock {\em Acta Math.}, 147:71--88, 1981.

\bibitem{Kaip}
M.~Kaip and J.~Saal.
\newblock The {P}ermanence of {$\mathcal{R}$}-{B}oundedness under
  {I}nterpolation and {A}pplications to {P}arabolic {S}ystems.
\newblock {\em J. Math. Sci. Univ. Tokyo}, 19:1--49, 2012.

\bibitem{Kalton-Weis:Operator-Sums}
N.~J. Kalton and L.~Weis.
\newblock {T}he ${H}^\infty$-{C}alculus and {S}ums of {C}losed {O}perators.
\newblock {\em Math. Ann.}, 321:319--345, 2001.

\bibitem{ko1976}
R.B. {Kellogg} and J.E. {Osborn}.
\newblock {A regularity result for the Stokes problem in a convex polygon.}
\newblock {\em {J. Funct. Anal.}}, 21:397--431, 1976.

\bibitem{kondra1967}
V.A. Kondrat'ev.
\newblock Boundary problems for elliptic equations in domains with conical or
  angular points.
\newblock {\em Trans. Mosc. Math. Soc}, 16:227--313, 1967.

\bibitem{Kozlov-Rossmann}
V. Kozlov, J. Rossmann:
\newblock On the nonstationary Stokes system in a cone. 
\newblock {J. Diff. Equ.} 260 (2016), 8277-8315.

\bibitem{Kunstmann}
P.~Kunstmann and L.~Weis.
\newblock {\em Maximal {$L_p$}-{R}egularity for {P}arabolic {E}quations,
  {F}ourier {M}ultiplier {T}heorems and {H$^\infty$}-{F}unctional {C}alculus.
  {I}n {F}unctional {A}nalytic {M}ethods for {E}volution {E}quations}.
\newblock volume 1855 of Lecture Notes in Mathematics, pages 65-311. Springer
  Berlin Heidelberg, 2004.

\bibitem{Maier}
S.~Maier and J.~Saal.
\newblock Stokes and {N}avier {S}tokes {E}quations with {P}erfect {S}lip on
  {W}edge {T}ype {D}omains.
\newblock {\em Discrete Contin. Dyn. Syst.- Series S}, 7(5):1045--1063, 2014.

\bibitem{maro2010}
V.G. Maz'ja and J.~Rossmann.
\newblock {\em Elliptic Equations in Polyhedral Domains}.
\newblock AMS, Providence, Rhode Island, 2010.

\bibitem{mitmon2008}
M.~Mitrea and S.~Monniaux.
\newblock The regularity of the {S}tokes operator and the {F}ujita-{K}ato
  approach to the {N}avier-{S}tokes initial value problem in {L}ipschitz
  domains.
\newblock {\em J. Funct. Anal.}, 254(6):1522--1574, 2008.

\bibitem{nauasaal}
T.~Nau and J.~Saal.
\newblock {$H^\infty$-{C}alculus for {C}ylindrical {B}oundary {V}alue
  {P}roblems.}
\newblock {\em Adv. Differ. Equ.}, 17(7-8):767--800, 2012.

\bibitem{Nazarov2001} A.I.~Nazarov.
\newblock {$L_p$}-estimates for a solution to the Dirichlet problem and to the Neumann problem for the heat equation in a wedge with edge of arbitrary
codimension.
{\em J. Math. Sci.}, 106:2989--3014, 2001.

\bibitem{Pruss}
J.~Pr\"uss and G.~Simonett.
\newblock H$^{\infty}$-{C}alculus for the {S}um of {N}on-{C}ommuting
  {O}perators.
\newblock {\em Trans. Amer. Math. Soc}, 359:3549--3565, 2007.

\bibitem{Pruss_Simonett}
J.~Pr\"uss and G.~Simonett.
\newblock {\em Moving Interface and Quasilinear Parabolic Evolution Equations}
\newblock { Monographs in Mathematics 105}, Birkh\"auser, (2016).

\bibitem{shen2012}
Z.~Shen.
\newblock Resolvent estimates in {$L_p$} for the {S}tokes operator in
  {L}ipschitz domains.
\newblock {\em Arch. Ration. Mech. Anal.}, 205(2):395--424, 2012.

\bibitem{Stein}
E.M. Stein.
\newblock {\em Singular Integrals and Differentiability Properties of
  Functions}.
\newblock Princeton University Press, 1970.

\bibitem{tolksdorf2017}
P.~Tolksdorf.
\newblock On the {$L^p$}-theory of the {N}avier-{S}tokes equations on
  three-dimensional bounded lipschitz domains.
\newblock arXiv:1703.01091.

\bibitem{triebel}
H.~Triebel.
\newblock {\em Interpolation {T}heory, {F}unction {S}paces, {D}ifferential
  {O}perators}.
\newblock North Holland, 1978.

\bibitem{voigt1992}
J.~Voigt.
\newblock Abstract {S}tein interpolation.
\newblock {\em Math. Nachr.}, 157:197--199, 1992.

\bibitem{Weis}
L.~Weis.
\newblock Operator-{V}alued {F}ourier {M}ultiplier {T}heorems and {M}aximal
  {L$_p$}-{R}egularity.
\newblock {\em Math. Ann.}, 2001.

\end{thebibliography}

\end{document}